
\documentclass[12pt]{amsart}
\usepackage{graphicx}
\usepackage{amssymb}
\usepackage{amsmath}
\usepackage{amsthm}
\usepackage{mathtools}
\usepackage{tikz}
\usepackage{psfrag}

\topmargin = -0.1 in
\oddsidemargin 0.2in
\evensidemargin 0.2in
\textwidth 6in
\textheight = 9 in

\footskip 22pt

\newtheorem{thm}{Theorem}[section]
\newtheorem{lem}[thm]{Lemma}
\newtheorem{pro}[thm]{Proposition}
\newtheorem{coro}[thm]{Corollary}

\theoremstyle{definition}
\newtheorem{defn}{Definition}[section]
\newtheorem{remk}[defn]{Remark}

\newcommand{\N}{\mathbf M^{n\times n}}

\newcommand{\n}{\mathbf n}

\newcommand{\B}{\mathbf B}
\newcommand{\R}{\mathbf R}
\newcommand{\dist}{\operatorname{dist}}

\newcommand{\X}{\mathcal X}
\newcommand{\Y}{\mathcal Y}
\newcommand{\G}{\mathcal G}
\newcommand{\tr}{\operatorname{tr}}
\newcommand{\rank}{\operatorname{rank}}
\newcommand{\dv}{\operatorname{div}}

\newcommand{\wcon}{\rightharpoonup}

\makeatletter
\numberwithin{equation}{section}
\makeatother

\begin{document}

\title[Convex integration and the  Perona-Malik  equation]{Convex integration and infinitely many weak solutions to the  Perona-Malik  equation \\  in all dimensions}
\author{Seonghak Kim and Baisheng Yan}
\address{Department of Mathematics\\ Michigan State
University\\ East Lansing, MI 48824, U.S.A.}
\email{kimseo14@msu.edu}
\email{yan@math.msu.edu}
\subjclass[2010]{35M13, 35K20, 35D30, 35F60, 49K20}
\keywords{Perona-Malik equation, differential inclusion, convex integration, Baire's category method, infinitely many weak solutions on convex domains}

\begin{abstract}
We prove  that for all smooth nonconstant   initial data the initial-Neumann boundary value problem for the Perona-Malik equation  in image processing   possesses  infinitely many Lipschitz  weak solutions on  smooth bounded convex  domains in all dimensions. Such   existence results have not been known except for the one-dimensional problems. Our  approach  is motivated by reformulating the Perona-Malik equation as a nonhomogeneous partial differential inclusion with  linear constraint and uncontrollable  components of gradient. We establish  a general existence result by a suitable  Baire's category method under a pivotal  density hypothesis. We finally fulfill this density hypothesis  by  convex integration based on certain approximations  from   an explicit formula of  lamination convex hull of some matrix set involved.  
\end{abstract}
\maketitle

\section{Introduction}

In this paper, we study  the initial and Neumann boundary value problem:
\begin{equation}\label{ib-PM}
\begin{cases} u_t =\dv \left (\frac{Du}{1+|Du|^2}\right ) &\mbox{in $\Omega\times (0,T),$}\\
{\partial u}/{\partial \n}=0 & \mbox{on $\partial \Omega\times (0,T),$}\\
u(x,0)=u_0(x), &x\in \Omega,
\end{cases}
\end{equation}
where $\Omega\subset \R^n$  is a smooth bounded convex domain, $T>0$ is a  given number, $u=u(x,t)$ is the unknown function with $u_t$ denoting its time-derivative and $Du=(u_{x_1},\cdots,u_{x_n})$ its spatial gradient, $\n$ is outer unit normal on $\partial\Omega$, and $u_0(x)$ is a  given smooth  function satisfying 
\begin{equation}\label{ib-PM-1}
Du_0\not\equiv 0 \;\; \mbox{in $\Omega$,}\quad {\partial u_0}/{\partial \n}=0\;\; \mbox{on $\partial \Omega.$}
\end{equation}

Problem (\ref{ib-PM}), especially when $n=2$, is a famous {\em Perona-Malik  model}   in image processing introduced by Perona and Malik  \cite{PM} for denoising and edge enhancement of a computer vision.  In this model, $u(x,t)$ represents an improved version of the initial gray level $u_0(x)$ of a noisy picture.
The anisotropic diffusion $\dv(\frac{Du}{1+|Du|^2})$ is forward parabolic in the \emph{subcritical} region where $|Du|<1$ and  backward parabolic in the  \emph{supercritical} region where $|Du|>1.$ 

The expectation of the Perona-Malik model is that disturbances with small gradient in the subcritical region will be smoothed out by the forward parabolic diffusion, while sharp edges corresponding to  large gradient  in the supercritical region  will be enhanced by the backward parabolic equation. Such  expected phenomenology has been implemented and observed in some numerical experiments, showing the stability and effectiveness of the model. On the other hand, many analytical works have shown that the model is highly ill-posed when the initial datum $u_0$ is    \emph{transcritical} in $\Omega$; namely,  there are subregions in $\Omega$ where  $|Du_0|<1$ and  where $|Du_0|>1$, respectively. For transcritical initial data, due to the backward parabolicity,  even a proper  notion and the existence of well-posed solutions to (\ref{ib-PM}) have remained largely unsettled.  Most  analytical works have   focused on the study of  singular perturbations, Young measure solutions, numerical scheme analyses, and  examples and properties  of certain classical solutions; see, e.g.,   \cite{BF, CZ, Es, GG1, GG2, KK, Ky}.

The present paper addresses the analytical issue concerning the existence of certain exact weak solutions  to problem (\ref{ib-PM}). Let $\Omega_T=\Omega\times (0,T)$. We say that a Lipschitz function $u\in W^{1,\infty}(\Omega_T)$  is   a  {\em weak solution} to (\ref{ib-PM}) provided for all $\zeta\in C^\infty(\bar\Omega_T)$ and  $s\in [0,T]$,
\begin{equation}\label{bdry-0}
 \int_\Omega u(x,s)\zeta(x,s)dx  + \int_0^s\int_\Omega \left (-u\zeta_t +\sigma(D u)\cdot D \zeta\right)dxdt=  \int_\Omega u_0(x)\zeta(x,0)dx,
\end{equation}
where $\sigma(p)=\frac{p}{1+|p|^2}$ ($p\in \R^n$) is the {\em Perona-Malik function.} 
The first  existence result on  such weak solutions  was established by {\sc K.\,Zhang} \cite{Zh} for the one-dimensional problem, whose pivotal idea is  to reformulate the one-dimensional Perona-Malik equation as a differential inclusion with linear constraint and then  prove the existence using a modified method of convex integration following the ideas of \cite{Ki,MSv1}.  Based on a similar approach of differential inclusion, we have  recently proved  in \cite{KY} that for all dimensions $n$ if the domain $\Omega$ is a ball and the nonconstant initial function $u_0$ is smooth and radially symmetric then (\ref{ib-PM}) admits infinitely many radially symmetric Lipschitz weak solutions.

The main purpose of this paper is to extend the results  of \cite{KY, Zh} to  problem (\ref{ib-PM}) on all $n$-dimensional smooth convex domains  for all  nonconstant smooth  initial data.

Our main result of the paper  is the following theorem.

\begin{thm}\label{thm:main-1}
Let $\Omega\subset\R^n$ be a bounded convex domain with  $\partial\Omega$ of $C^{2+\alpha}$ and let $u_0\in C^{2+\alpha} (\bar\Omega)$   satisfy   $(\ref{ib-PM-1})$  for some constant  $0<\alpha<1.$ 
Then $(\ref{ib-PM})$  possesses infinitely many weak solutions. Moreover, if $\|Du_0\|_{L^\infty(\Omega)}\ge 1$ and $\lambda>0$, then these weak solutions $u$  will  satisfy  the \emph{almost  gradient   maximum principle:}
 \[
\|Du\|_{L^\infty(\Omega_T)}\leq \|Du_0\|_{L^\infty(\Omega)}+\lambda.
\]
\end{thm}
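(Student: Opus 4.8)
The plan is to reformulate \eqref{ib-PM} as a differential inclusion and then solve it by a Baire category / convex integration scheme. Writing $\sigma(p)=\frac{p}{1+|p|^2}$, the weak formulation \eqref{bdry-0} suggests introducing a vector field $v\colon\Omega_T\to\R^n$ playing the role of the flux, so that $u_t=\dv v$ and $v=\sigma(Du)$. The divergence constraint $u_t=\dv v$ is linear and can be handled by choosing a potential: there should exist $w$ with $Du=$ (spatial part of some gradient of a potential) and $v$ determined by time derivatives, so that the pair $(Du,v)$ is forced to lie in a fixed graph-like set $K=\{(p,q)\colon q=\sigma(p)\}$ in $\R^n\times\R^n$, while the components of the gradient of the potential orthogonal to this structure are uncontrolled. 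Concretely I would look for $u$ of the form built from $u_0$ plus a correction, and an auxiliary matrix-valued (or vector-valued) unknown $B$, such that the system becomes $\nabla U \in K$ a.e. in $\Omega_T$ together with the affine (divergence) constraint, with $U=(u, \text{potential})$ and with some gradient entries free. This is exactly the ``nonhomogeneous partial differential inclusion with linear constraint and uncontrollable components of gradient'' advertised in the abstract.

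Next I would set up the subsolution machinery. Because the problem is transcritical only where $|Du_0|\ne 1$, one first constructs a suitable \emph{smooth} (or at least $C^1$, piecewise smooth) strict subsolution: a function $\bar u$, or rather a pair $(\bar u,\bar v)$, satisfying the divergence constraint exactly but with $(D\bar u,\bar v)$ lying in the \emph{interior} of the relevant lamination convex hull $K^{lc}$ of $K$ (with a uniform gap), except on a small ``bad'' set where one must be more careful; near $|Du_0|=1$ and near $\partial\Omega$ this requires the regularity hypotheses $\partial\Omega\in C^{2+\alpha}$, $u_0\in C^{2+\alpha}$. The existence of such a subsolution is essentially where solving a linear parabolic problem (a regularized or relaxed PDE) enters, to produce a starting point whose gradient is controlled in $L^\infty$; this is also where the constant $\lambda$ in the almost gradient maximum principle is absorbed, by choosing the subsolution with $\|D\bar u\|_\infty\le\|Du_0\|_\infty+\lambda/2$, say, and keeping subsequent corrections small.

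Then I would invoke the abstract Baire category existence theorem (the ``general existence result'' promised in the excerpt): in the complete metric space of subsolutions with the $C^0$ (or $L^\infty$-weak-$*$) topology, the set of $U$ for which $\nabla U\in K$ a.e.\ is residual, \emph{provided} the pivotal density hypothesis holds — namely that near any subsolution with $\nabla U$ in the interior of $K^{lc}$ one can find, by small perturbations supported on tiny cubes, new subsolutions that on a definite fraction of the domain have gradient genuinely closer to $K$. Verifying this density hypothesis is the heart of the matter and is done by convex integration: one needs the explicit structure of $K^{lc}$, i.e.\ an explicit formula for the lamination convex hull of the matrix set encoding $q=\sigma(p)$ under the linear constraint, together with rank-one (laminate) directions along which one can oscillate $U$ while staying a subsolution. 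The key algebraic input is that the Perona-Malik graph, although a curved one-dimensional-looking set in each fiber, has a lamination convex hull large enough (because of the uncontrolled gradient components and the one free ``backward'' direction) to contain a neighborhood of the subsolution's gradient, so that standard in-approximation lemmas apply.

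The main obstacle, as I see it, is precisely this density/convex-integration step in dimension $n>1$: one must produce an \emph{explicit} description of the lamination convex hull $K^{lc}$ of the relevant matrix set and exhibit admissible rank-one segments inside it whose endpoints approximate $K$, all while respecting the linear (divergence-in-$(x,t)$) constraint and the Neumann and initial conditions. In one dimension (Zhang \cite{Zh}) and in the radial case (\cite{KY}) the relevant set is effectively two-dimensional and the hull is computed by hand; here the constraint couples $n+1$ variables and the curvature of $\sigma$ must be dealt with by a careful choice of which gradient components to leave uncontrolled. I expect the argument to proceed by: (i) freezing the bad set $\{|Du_0|=1\}\cup\partial\Omega$ and working on the good open set; (ii) reducing, via the linear constraint, to a pointwise inclusion $\nabla U\in K$ with $K$ a fixed bounded set; (iii) computing $K^{lc}$ explicitly and checking it is open-dense-accessible from interior points; (iv) running the Baire category theorem to get a residual — hence nonempty and in fact infinite — set of exact solutions; and (v) translating back, noting that the initial datum $u_0$ and the Neumann condition are preserved by the construction because the corrections vanish on the parabolic boundary, and that the $L^\infty$ gradient bound is inherited from the subsolution, giving the stated almost gradient maximum principle.
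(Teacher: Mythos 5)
Your overall strategy (differential inclusion, explicit lamination hull, Baire category plus convex integration) is the right family of ideas, but two of your concrete steps would not work as written. First, the reformulation: with $v=\sigma(Du)$ and $u_t=\dv v$, the relation $v=\sigma(Du)$ is a pointwise condition on the \emph{function} $v$, not on a gradient, so there is no differential inclusion to relax; the workable choice is the opposite one, $\dv v=u$ and $v_t=\sigma(Du)$, so that for $w=(u,v)$ the space-time Jacobian $\nabla w=\begin{pmatrix}Du&u_t\\Dv&v_t\end{pmatrix}$ is constrained to the set $K(u)$ of (\ref{set-K}), with $u_t$ and the trace-free part of $Dv$ uncontrolled and the linear constraint $\tr Dv=u$ nonhomogeneous. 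Second, your ``strict subsolution'' step has no mechanism behind it: the actual starting pair is produced by solving the modified uniformly parabolic problem (\ref{ib-para}) obtained from Lemma \ref{lem-modi}, and the needed bound $\|Du^*\|_{L^\infty}\le\|Du_0\|_{L^\infty}$ comes from the gradient maximum principle of Theorem \ref{existence-gr-max}, which is exactly where convexity of $\Omega$ (never invoked in your sketch) is indispensable -- it can fail on nonconvex domains even for the heat equation. Note also that the starting pair is \emph{not} uniformly inside the hull: where $|Du^*|\le m_-(\delta)$ it lies on $K$ itself ($\mathcal K_\delta$), and nothing is perturbed there.

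The deeper gap is your assumption that, once the hull is computed, ``standard in-approximation lemmas apply.'' For $n\ge 2$ one cannot control $\|Dv\|_{L^\infty}$ in terms of $u=\dv v$, and this is precisely why the M\"uller--Sychev framework is inapplicable here (its reducibility condition is vacuous because of the trace constraint). The proof must instead define the admissible class for $u$ alone, treating $v$ as auxiliary (in the limit $v$ lives only in $W^{1,2}((0,T);L^2)$), prove a constrained relaxation (Theorem \ref{main-lemma}) giving perturbations $(\varphi,\psi)$ with $\dv\psi=0$ and $\varphi_t$ small, repair the broken constraint $\dv(v+\psi)=u\neq u+\varphi$ by a right inverse $\mathcal R$ of the divergence bounded from $L^\infty$ to $L^\infty$ on boxes (Theorem \ref{div-inv}), and patch only \emph{finitely} many boxes because $D(\mathcal R\varphi)$ is not controlled; none of this appears in your outline. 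Finally, your mechanism for the almost gradient maximum principle (``keep subsequent corrections small'') is wrong in kind: convex integration makes $O(1)$ oscillations of $Du$, so smallness of corrections cannot yield the bound. The bound holds because every admissible $u$ satisfies $(Du,v_t)\in\mathcal S_\delta\cup\mathcal K_\delta$, and by Corollary \ref{bound-coro} together with the choice $\delta=\frac{M+\lambda}{1+(M+\lambda)^2}$ one has $\mathcal S_\delta\subset\{|p|<m_+(\delta)\}$ with $m_+(\delta)=M+\lambda$; the weak solutions obtained are a.e.\ limits of such gradients, whence $\|Du\|_{L^\infty(\Omega_T)}\le M+\lambda$.
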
 

This theorem asserts that the Perona-Malik problem (\ref{ib-PM}) admits infinitely many Lipschitz weak solutions no matter whether the  initial datum is subcritical, supercritical, or transcritical.  
 
Existence of  classical solutions to Problem (\ref{ib-PM})  depends heavily on the initial data $u_0.$  {\sc Kawohl \& Kutev} \cite{KK} showed that a classical solution exists in any dimension if $u_0$ is subcritical in $\bar{\Omega}$ (see also \cite{Ky}). Later, {\sc Gobbino} \cite{Go} showed that the problem  cannot admit a global classical solution when $n=1$ if $u_0$ is transcritical.  Recently, {\sc Ghisi \& Gobbino} \cite{GG1,GG2} have studied the existence and  properties of certain classical solutions of the Perona-Malik equation in the one-dimensional or  $n$-dimensional radially symmetric  cases with suitably chosen  initial data; their  initial values can be arbitrarily given in the subcritical region, but the values in the supercritical region must be \emph{predetermined} by the subcritical  initial values.

We remark that the convexity of  the domain is needed to guarantee a \emph{gradient maximum principle} for the classical solution to  initial-Neumann boundary value problem of a class of quasilinear  uniformly parabolic equations (see Theorem \ref{existence-gr-max} below).  This gradient maximum principle turns out to be crucial for the proof of main theorem, and   an example  in \cite[Theorem 4.1]{AR} showed that such a gradient maximum principle may fail  even for  heat equation without the convexity of the domain. However, domain convexity  seemed to be  overlooked in  \cite[Theorem 6.1]{KK}.

For  the proof  of   Theorem \ref{thm:main-1}, in what follows, we  assume the initial function $u_0$  satisfies
\begin{equation}\label{ib-PM-2}
\int_\Omega u_0(x)dx=0,
\end{equation}
since otherwise one can solve solution $\tilde u$ of  (\ref{ib-PM}) with new initial datum $\tilde{u}_0=u_0-\frac{1}{|\Omega|}\int_\Omega u_0 dx;$ then $u=\tilde u+\frac{1}{|\Omega|}\int_\Omega u_0 dx$ will solve (\ref{ib-PM}).  

Our proof   is based on a crucial generalization of  the ideas of \cite{KY, Zh, Zh1}. Let us discuss this  generalization in  some details because it exhibits several different  features from the one-dimensional setup.

Assume $u\in W^{1,\infty}(\Omega_T)$ is a weak solution to (\ref{ib-PM}) and  suppose there exists a vector function $v\in W^{1,\infty}(\Omega_T;\mathbf R^n)$ such that $\dv v=u$ and $v_t=\sigma(Du)$ a.e.\,in $\Omega_T$. Let $w=(u,v)\colon \Omega_T\to \R^{1+n}$, with  space-time Jacobian matrix denoted  by
\[
\nabla w=\begin{pmatrix}Du & u_t\\ Dv & v_t\end{pmatrix} 
\]
 as an element in the matrix space $\mathbf M^{(1+n)\times (n+1)}$.
Given $s \in\R$,  define the set $K(s)$ in  $\mathbf M^{(1+n)\times (n+1)}$ by
\begin{equation}\label{set-K}
 K(s)= \left\{ \begin{pmatrix} p & c\\ B & \sigma(p)\end{pmatrix}\,\Big | \, p\in\R^n,\,c\in\R,\,B\in\mathbf{M}^{n\times n},\,  \tr B=s \right\}.
\end{equation}
Then  $w=(u,v)$ solves the {\em nonhomogeneous partial differential inclusion}:
\[
\nabla w(x,t)\in  K(u(x,t)),   \quad \textrm{a.e. $(x,t)\in\Omega_T.$}
\]
Conversely, suppose we have found a function $\Phi=(u^*,v^*),$ where  $u^*\in W^{1,\infty}(\Omega_T)$ and $v^*\in W^{1,\infty}(\Omega_T;\R^n),$ such that
\begin{equation}\label{bdry-1}
\begin{cases} u^*(x,0)=u_0(x) \; (x\in\Omega),\\
\dv v^*=u^*\;\;\textrm{a.e. in $\Omega_T$}, \\
v^*(\cdot,t) \cdot \mathbf n|_{\partial\Omega} =0 \; \; \forall\; t\in [0,T].\end{cases}
\end{equation}
 Assume $w=(u,v)\in W^{1,\infty}(\Omega_T;\R^{1+n})$ solves  the Dirichlet problem of nonhomogeneous differential inclusion:
\begin{equation}\label{pdi-D}
\begin{cases} \nabla w(x,t)\in  K(u(x,t)),  &  \textrm{a.e. $(x,t)\in\Omega_T$,}\\
w(x,t)= \Phi(x,t), &(x,t)\in\partial \Omega_T.
\end{cases}
\end{equation}
Then it can be verified that $u$ is a weak solution to  $(\ref{ib-PM})$ (see Lemma \ref{lem32}).

The Dirichlet problem (\ref{pdi-D}) falls into the  framework of  general nonhomogeneous partial differential inclusions studied by {\sc Dacorogna \& Marcellini} \cite{DM1} using Baire's category method and by {\sc M\"uller \&  Sychev} \cite{MSy} using the convex integration method; see also \cite{Ki}.  Study of such differential inclusions  has stemmed from the successful understanding of homogeneous differential inclusions of the form $Du(x)\in K$ first encountered in the study of crystal microstructure  by {\sc Ball \& James} \cite{BJ}, {\sc Chipot \& Kinderlehrer} \cite{CK} and {\sc M\"uller \& \v Sver\'ak}  \cite{MSv1}.   Recently, the method  of  differential inclusions  has been successfully applied to other important problems ; see, e.g.,  \cite{CFG,DS,MP,MSv2,Sh,Ya1}.

We point out that  the existence result of \cite{MSy} is not applicable  to problem  (\ref{pdi-D}) even in dimension $n=1$, as has already been noticed in \cite{Zh, Zh1}.  A key condition in the main existence theorem of \cite{MSy}, when applied to (\ref{pdi-D}), would require that the boundary function $\Phi$  satisfy
\[
\nabla \Phi(x,t)\in U(u^*(x,t))\cup K(u^*(x,t)), \;\; a.e.\; (x,t)\in \Omega_T,
\]
 where $U(s)\subset \mathbf M^{(1+n)\times (n+1)} \;(s\in\R)$ are bounded sets  that are \emph{reducible to}  $K(s)$ in the sense that, for every $s_0 \in \R,\; \xi_0\in U(s_0),\; \epsilon>0$, and bounded Lipschitz domain $G\subset  \R^{n+1}$, there exist  a piecewise affine function
$w\in  W_0^{1,\infty}(G;\R^{1+n})$ and a $\delta>0$ satisfying,  for a.e.\,$z=(x,t)\in G$,
\[
\xi_0+ \nabla w(z) \in \bigcap_{|s-s_0|< \delta} U(s), \;\; 
 \int_G \dist (\xi_0 +\nabla w(z),K(s_0))\,dz <\epsilon |G|.
\]
The second condition would imply  $\tr B_0=s_0$ for each $\xi_0=\begin{pmatrix} p_0 & c_0\\ B_0 & \beta_0\end{pmatrix}\in U(s_0)$ and  $s_0\in\R$; but then $\cap_{|s-s_0|< \delta} U(s) =\emptyset,$ which makes the first condition impossible.

However, certain geometric structures  of the set $K(0)$ turn out  still  useful, especially when it comes to the relaxation of homogeneous differential inclusion $\nabla \omega(z)\in K(0)$ with $\omega=(\varphi,\psi)$.
We explicitly compute the first-order lamination set $L(K(0))$ of $K(0)$ consisting of all  $\xi\in \mathbf M^{(1+n)\times (n+1)}\setminus K(0)$ such that $\xi=\lambda\xi_1+(1-\lambda)\xi_2$ for some $\lambda\in (0,1)$ and $\xi_1,\xi_2\in K(0)$  with  $\rank(\xi_1-\xi_2)=1$. We obtain the explicit formula (see Theorem \ref{thm-rconv})
\begin{equation*}
L(K(0))=\left\{ \begin{pmatrix} p & c\\ B & \beta\end{pmatrix} \, \Big |\; \tr B=0, \; |\beta|^2 +(p\cdot\beta)^2 -p\cdot \beta <0\right \},
\end{equation*}
which enables us to extract enough information on the diagonal components of differential inclusion $\nabla \omega(z)\in K(0)$ and establish a relaxation result on $(D\varphi,\psi_t)$ (see Theorem \ref{main-lemma}). Although for such  relaxation we must have $\dv \psi=0$, the resulting $\varphi_t$ can be arbitrarily small; this is  important for the subsequent  handling of the linear constraint  $\dv v =u$ in problem (\ref{pdi-D}).

Another difficulty concerning  problem  (\ref{pdi-D}) is that when $n=1$, one can control $\|v_x\|_{L^{\infty}(\Omega_T)}$ in terms of $u=v_x$ (see \cite{Zh}); however, for $n\ge 2$, it is impossible to control $\|Dv\|_{L^\infty(\Omega_T)}$ in terms of  $u=\dv v.$ So, if $n\ge 2$, the space $W^{1,\infty}(\Omega_T;\R^n)$ is not suitable for the function $v$. It turns out that a suitable    space for $v$ is the space $W^{1,2}((0,T);L^2(\Omega;\R^n))$  of abstract functions (see Lemma \ref{gen-lem}); in this setting, the linear constraint $\dv v=u$ must be understood in the sense of distributions.

We design a  new approach to overcome the  lack of control on $Dv$\,:  instead of defining an admissible class for $w=(u,v)$, we define a suitable admissible class for only the functions $u\in W^{1,\infty}(\Omega_T)$, treating  $v$ as auxiliary functions. Of course, during  all the relevant constructions,   the linear constraint $\dv v=u$ must be satisfied.
In this regard, we need a linear operator $\mathcal R$ that serves as a (distributional)  right inverse of the divergence operator: $\dv \mathcal R=Id$.   By the results of \cite{BB}, such an operator may not exist as a bounded operator on certain spaces, but for our purpose, it suffices to construct such an operator $\mathcal R$ that is bounded from $L^\infty(Q\times I)$ to $L^\infty(Q\times I;\R^n)$ for the box domains $Q\times I$ in $\R^{n+1};$ this is achieved by  following  some  construction in \cite{BB}.

Finally we remark that although the result of this paper heavily relies on the explicit formula of $L(K(0))$,  the method can handle some general  forward-backward parabolic equations; however, we do not intend to discuss further results of this direction  in the present  paper.

The rest of the paper is organized as follows. In Section 2, we collect several necessary preliminary results, some of which cannot be found in the standard references.  In Section 3, we set up a new general procedure for proving  Theorem \ref{thm:main-1} under a pivotal density hypothesis of an admissible class $\mathcal U$; this setup  is suitable for a Baire's category method and simplifies some of the arguments even for the one-dimensional problem. In Section 4, as the heart of the matter for fulfilling the density hypothesis and thus proving Theorem \ref{thm:main-1}, we present the essential  geometric considerations, including an explicit  computation of the set $L(K(0))$ above and establishing a critical  relaxation property (Theorem \ref{main-lemma})  by convex integration with linear constraint. In Section  5, we  construct the suitable admissible class $\mathcal U$ after defining a specific  boundary function $\Phi=(u^*,v^*)$.  In Section 6, we fulfill the key density hypothesis for admissible class $\mathcal U$ (Theorem \ref{thm-density-1}) and finally complete the proof of Theorem \ref{thm:main-1} according to the setup of Section 3.

\section{Some preliminary results}

\subsection{Uniformly parabolic quasilinear equations} We refer to the standard references (e.g., \cite{LSU, Ln}) for general theory of parabolic equations, including some notation  concerning functions and domains of class $C^{k+\alpha}$ for integer $k\ge 0$ and  number $0<\alpha<1.$   
 
Assume $f\in C^{3}([0,\infty))$ is a function satisfying
\begin{equation}\label{1-2}
  \theta \le f(s)+2sf'(s)\le \Theta \quad \forall\;  s\ge 0,
\end{equation}
where $\Theta\ge\theta>0$ are constants. This condition is equivalent to $\theta\le (sf(s^2))'\le \Theta$ for all $s\in\R;$ hence, $\theta\le f(s)\le\Theta$ for all $s\ge 0.$ Let 
\[
A(p)=f(|p|^2)p \quad (p\in \R^n).
\]
Then we have
\[
A^i_{p_j}(p)  = f(|p|^2)\delta_{ij} + 2f'(|p|^2) p_ip_j \quad (i,j=1,2,\cdots,n;\; p\in \R^n)
\]
and hence  the \emph{uniform ellipticity condition}:
\begin{equation}\label{para-0}
\theta |q|^2 \le \sum_{i,j=1}^n A^i_{p_j}(p) q_iq_j\le \Theta |q|^2\quad \forall\; p,\;q\in\R^n.
\end{equation}

\begin{thm}\label{existence-gr-max}
Let $\Omega\subset\R^n$ be a bounded convex domain with  $\partial\Omega$ of $C^{2+\alpha}$ and  $u_0\in C^{2+\alpha} (\bar\Omega)$   satisfy  $Du_0\cdot \n=0$ on $\partial\Omega.$ 
Then the initial-Neumann boundary value problem
\begin{equation}\label{2-2-2}
  \begin{cases}
  u_t=\dv (A(Du)) & \mbox{in }\Omega_T, \\
  {\partial u}/{\partial \n}=0 & \mbox{on }\partial\Omega\times(0,T), \\
  u(x,0)=u_0(x) & \mbox{for } x\in\Omega
\end{cases}
\end{equation}
has a unique solution $u\in C^{2+\alpha,\frac{2+\alpha}{2}}(\bar\Omega_T)$. Moreover,  the gradient maximum principle holds:
\begin{equation}\label{gr-max-P}
\|Du\|_{L^\infty(\Omega_T)}=\|Du_0\|_{L^\infty(\Omega)}.
\end{equation}
\end{thm}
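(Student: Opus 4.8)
The plan is to split the statement into two parts: (i) existence, uniqueness, and $C^{2+\alpha,\frac{2+\alpha}{2}}(\bar\Omega_T)$-regularity, which I would obtain from the classical theory of uniformly parabolic quasilinear equations once the correct a priori bounds are in hand; and (ii) the gradient maximum principle \eqref{gr-max-P}, which is where convexity of $\Omega$ enters essentially.

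For (i), rewrite the equation in nondivergence form $u_t=a_{ij}(Du)\,u_{x_ix_j}$ with $a_{ij}(p)=A^i_{p_j}(p)$; by \eqref{para-0} this is uniformly parabolic with constants $\theta,\Theta$ independent of $p$, the coefficients are $C^2$ in $p$ since $f\in C^3$, the first-order compatibility condition at $\partial\Omega\times\{0\}$ is exactly the hypothesis $Du_0\cdot\n=0$, and the Neumann datum is the natural (conormal) one because $A(Du)\cdot\n=f(|Du|^2)(Du\cdot\n)$ and $f\ge\theta>0$. I would establish existence by the Leray--Schauder fixed point theorem (or the method of continuity along the family $A_\lambda(p)=(1-\lambda)p+\lambda A(p)$, each obeying \eqref{para-0}), the required a priori estimates being: $\|u\|_{L^\infty(\Omega_T)}\le\|u_0\|_{L^\infty(\Omega)}$ by the maximum principle (no zeroth-order term); $\|Du\|_{L^\infty(\Omega_T)}\le\|Du_0\|_{L^\infty(\Omega)}$ by part (ii); $\|Du\|_{C^{\alpha,\alpha/2}(\bar\Omega_T)}\le C$ from De Giorgi--Nash--Moser for the divergence-form equation with bounded coefficients together with the boundary regularity theory for conormal parabolic problems (after flattening the $C^{2+\alpha}$ boundary); and finally the linear parabolic Schauder estimate for $u_t=a_{ij}(Du)u_{x_ix_j}$ with these Hölder coefficients, giving $u\in C^{2+\alpha,\frac{2+\alpha}{2}}(\bar\Omega_T)$ and, by bootstrapping, interior $C^3$ regularity. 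Uniqueness is an energy estimate: for two solutions $u_1,u_2$ the difference $w=u_1-u_2$ solves $w_t=\dv(\bar a(x,t)Dw)$ with $\bar a_{ij}=\int_0^1 A^i_{p_j}(Du_2+\tau Dw)\,d\tau$ uniformly elliptic and $\bar aDw\cdot\n=(A(Du_1)-A(Du_2))\cdot\n=0$ on $\partial\Omega$; testing with $w$ and using $w(\cdot,0)=0$ gives $\frac{d}{dt}\int_\Omega w^2\,dx\le 0$, so $w\equiv 0$.

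For (ii), put $v:=|Du|^2$. Since $u\in C^3(\Omega_T)$, one may differentiate $u_t=a_{ij}(Du)u_{x_ix_j}$ in $x_k$, multiply by $2u_{x_k}$, sum over $k$, and use $2\sum_k u_{x_k}u_{x_lx_k}=v_{x_l}$ and $a_{ij}v_{x_ix_j}=2a_{ij}\sum_k(u_{x_kx_i}u_{x_kx_j}+u_{x_k}u_{x_ix_jx_k})$ to get
\[
v_t-a_{ij}(Du)\,v_{x_ix_j}-b_l\,v_{x_l}=-2\sum_k a_{ij}(Du)\,u_{x_kx_i}u_{x_kx_j}\le 0,\qquad b_l:=\partial_{p_l}a_{ij}(Du)\,u_{x_ix_j},
\]
the right-hand side being nonpositive by \eqref{para-0}. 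Thus $v$ is a classical subsolution of a uniformly parabolic operator with continuous coefficients on $\Omega_T$ and $v\in C^{1+\alpha}(\bar\Omega_T)$, so by the strong maximum principle $\max_{\bar\Omega_T}v$ is attained on the parabolic boundary. On $\bar\Omega\times\{0\}$ this value is $\|Du_0\|_{L^\infty(\Omega)}^2$; were the maximum attained only at some $(x_0,t_0)\in\partial\Omega\times(0,T]$, the parabolic Hopf lemma (applied on an interior ball tangent to $\partial\Omega$ at $x_0$, available since $\partial\Omega\in C^{2+\alpha}$) would force $\partial_\n v(x_0,t_0)>0$. But $Du\cdot\n=0$ on $\partial\Omega$ means $Du$ is tangent there, and differentiating $Du\cdot\n\equiv0$ along a tangent direction $\tau$ gives $(D^2u\,\n)\cdot\tau=-(S\,Du)\cdot\tau$ for every tangent $\tau$, where $S\tau=\partial_\tau\n$ is the shape operator of $\partial\Omega$ relative to the outer normal; hence, since $Du$ is tangent,
\[
\partial_\n v=2\,Du\cdot(D^2u\,\n)=-2\,(S\,Du)\cdot Du\le 0,
\]
because convexity of $\Omega$ makes $S$ positive semidefinite. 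This contradiction forces the maximum of $v$ to occur at $t=0$, which is \eqref{gr-max-P} (the reverse inequality being trivial).

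The genuine crux is this last boundary computation: it is the sole place convexity is used and, as the authors stress and \cite[Theorem 4.1]{AR} shows, the principle fails without it, so the argument has no slack there. The remaining technical point needing care is the conormal (oblique-derivative) boundary regularity used to pass from the bounded gradient to $\|Du\|_{C^{\alpha,\alpha/2}(\bar\Omega_T)}\le C$ and then to the global Schauder bound; this is handled by localizing near $\partial\Omega$, flattening it via its $C^{2+\alpha}$ parametrization, and invoking the standard linear parabolic estimates from \cite{LSU, Ln}. All else is a routine assembly of classical results.
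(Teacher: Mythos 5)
Your proposal is correct and follows essentially the same route as the paper: the parabolic differential inequality for $v=|Du|^2$, the strong maximum principle, and the parabolic Hopf lemma contradicted by the boundary sign condition $\partial v/\partial \n\le 0$, which is exactly where convexity enters. The only differences are that you prove that boundary inequality directly via the shape operator (the paper cites it from Alikakos--Rostamian and Kahane) and you sketch the Leray--Schauder/Schauder machinery behind the existence, uniqueness and $C^{2+\alpha,\frac{2+\alpha}{2}}$ regularity, which the paper simply quotes from Lieberman's book.
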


\begin{proof}  1. As problem (\ref{2-2-2}) is uniformly parabolic by (\ref{para-0}),  the existence of unique  classical solution $u$ in $C^{2+\alpha,\frac{2+\alpha}{2}}(\bar\Omega_T)$  follows from the standard theory; see \cite[Theorem 13.24]{Ln}.  To prove  the gradient  maximum principle (\ref{gr-max-P}),  note that, since $A\in C^3 (\mathbf R^n)$, a standard bootstrap argument based on the regularity theory of linear parabolic equations \cite{LSU, Ln} shows that the solution $u$ has all continuous partial derivatives $u_{x_ix_jx_k}$ and $u_{x_it}$   within $\Omega_T$  for $1\le i,j,k\le n.$ 

2. Let $v=|Du|^2.$ Then, within $\Omega_T,$ we compute
\begin{eqnarray*}
&\Delta v =2 Du \cdot D(\Delta u) + 2 |D^2u|^2,&\\
&u_t=\dv (A(Du))= \dv (f(v)Du)= f'(v) Dv \cdot Du + f(v) \Delta u,&\\
&\begin{split}
Du_t =& f''(v) (Dv\cdot Du) Dv + f'(v) (D^2u)  Dv \\
&+ f'(v) (D^2v) Du + f'(v) (\Delta u) Dv + f(v) D(\Delta u).\end{split}&
\end{eqnarray*}
Plugging these equations into $v_t= 2Du\cdot Du_t$, we obtain
\begin{equation}\label{para-2}
v_t- \mathcal L (v) - B\cdot Dv =-2f(|Du|^2) |D^2u|^2\le 0 \quad \mbox{in $\Omega_T$,}
\end{equation}
where operator $\mathcal L(v)$ and  coefficient $B$ are defined by
\begin{eqnarray*}
&\mathcal L(v)= f(|Du|^2)\Delta v + 2 f'(|Du|^2) Du\cdot (D^2 v)Du,&\\
&B= 2f''(v) (Dv\cdot Du)Du   + 2 f'(v) (D^2u)  Du +
 2 f'(v) (\Delta u)  Du.&
\end{eqnarray*}
We write  $\mathcal L (v)= \sum_{i,j=1}^n a_{ij} v_{x_ix_j},$
 with   coefficients $a_{ij}=a_{ij}(x,t)$ given by
\[
a_{ij}=A^i_{p_j}(Du)=f(|Du|^2)\delta_{ij} + 2 f'(|Du|^2)u_{x_i}u_{x_j} \quad  (i,j=1,\cdots,n).
\]
Note that on $\bar\Omega_T$  all eigenvalues of the matrix $(a_{ij})$ lie in $[\theta,\Theta]$. 

3. We  show
\[
 \max_{(x,t)\in\bar\Omega_T}v(x,t) = \max_{x\in \bar\Omega} v(x,0),
\]
which proves (\ref{gr-max-P}).  We prove this by contradiction. Suppose
\begin{equation}\label{claim-1}
M:=\max_{(x,t)\in\bar\Omega_T}v(x,t) > \max_{x\in \bar\Omega} v(x,0).
\end{equation}
  Let  $(x_0,t_0)\in\bar\Omega_T$ be such that  $v(x_0,t_0)=M;$ then $t_0>0.$ If $x_0\in \Omega$, then the strong maximum principle applied to (\ref{para-2})  would imply  that $v$ is constant on $\Omega_{t_0},$ which yields  $v(x,0)\equiv M$ on $\bar\Omega$, a contradiction to (\ref{claim-1}). Consequently $x_0\in \partial\Omega$ and thus $v(x_0,t_0)=M>v(x,t)$ for all $(x,t)\in \Omega_T.$   We can then apply Hopf's Lemma for parabolic equations \cite{PW}  to (\ref{para-2}) to deduce $
 \partial v(x_0,t_0) /\partial \n >0.$ However, a result of \cite[Lemma 2.1]{AR} (see also \cite[Theorem 2]{Ka}) asserts  that $\partial v/\partial\n\le 0$ on $\partial\Omega\times [0,T]$ (convexity of $\Omega$ is used and necessary here),  which gives a desired contradiction.  
 \end{proof}

\subsection{Modification of the Perona-Malik function} We need to modify the Perona-Malik function $\sigma(p)= \frac{p}{1+|p|^2}$ to obtain  a uniformly parabolic problem of type  (\ref{2-2-2}).
For this purpose, let
\[
\rho(s)=\dfrac{s}{1+s^2} \quad (s\ge 0)
\]
and, for $0<\delta<1/2,$ let $m=m_\pm(\delta)$ be the solutions of $\rho(m)=\delta$;
that is,
\begin{equation}\label{root-PM}
m_\pm(\delta)=\frac{1\pm \sqrt{1-4\delta^2}}{2\delta}.
\end{equation}

The following result can be proved in a similar way as in \cite{CZ, Zh}; we omit the proof (see Figure 1).

\begin{lem}\label{lem-modi}
Let $0<\delta<1/2$ and $1<\Lambda<m_+(\delta).$    Then there exists a function $\rho^*\in C^{3}([0,\infty))$   satisfying  that
\[
\rho^*(s)=\rho(s)\quad  \forall\; 0\le s\le m_-(\delta),
\]
\[
\rho^*(s)< \rho(s) \quad  \forall\; m_-(\delta)<s\le \Lambda,
\]
\[
\theta\le(\rho^*)'(s)\le\Theta \quad \forall\; 0\le s<\infty
\]
for some constants  $\Theta>\theta>0.$
Moreover, define $f(0)=1$ and $f(s)=\rho^*(\sqrt s)/\sqrt s$ for $s>0;$  then $f\in C^{3}([0,\infty))$ and \emph{(\ref{1-2})} is fulfilled.
\end{lem}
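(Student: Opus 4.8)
The plan is to construct $\rho^*$ by first defining its derivative $g=(\rho^*)'$ as a suitable $C^2$ function on $[0,\infty)$ taking values in $[\theta,\Theta]$, and then integrating. Recall $\rho'(s)=\frac{1-s^2}{(1+s^2)^2}$, so $\rho'(m_-(\delta))=:\theta_0>0$ (since $m_-(\delta)<1$), while $\rho'$ becomes negative for $s>1$. On $[0,m_-(\delta)]$ we are forced to take $g=\rho'$. On $(m_-(\delta),\infty)$ I would interpolate: pick a small $\theta\in(0,\theta_0)$ and a large $\Theta>\max\{\Theta_0,\rho'(0)\}$ where $\Theta_0$ bounds $\rho'$ on $[0,m_-(\delta)]$, and choose $g$ on $(m_-(\delta),\Lambda]$ to be a $C^2$ function that (i) matches $\rho'$ to second order at $m_-(\delta)$, (ii) stays in $(0,\theta_0)$, hence strictly below $\rho'$ on a right-neighborhood and in particular forces $\rho^*(s)<\rho(s)$ there, and (iii) is small enough on $(m_-(\delta),\Lambda]$ that the accumulated deficit $\int_{m_-(\delta)}^s(\rho'-g)>0$ keeps $\rho^*<\rho$ all the way to $\Lambda$. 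Beyond $\Lambda$ we just extend $g$ as any $C^2$ function valued in $[\theta,\Theta]$ (e.g. constant $\theta$ after a short transition), since no constraint relative to $\rho$ is imposed there. Then set $\rho^*(s)=\int_0^s g(\tau)\,d\tau$; it is $C^3$ because $g$ is $C^2$, agrees with $\rho$ on $[0,m_-(\delta)]$, and $\theta\le(\rho^*)'\le\Theta$ by construction.

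For the monotonicity condition $\rho^*(s)<\rho(s)$ on $(m_-(\delta),\Lambda]$, the key point is that $h:=\rho-\rho^*$ satisfies $h(m_-(\delta))=0$, $h'(m_-(\delta))=0$, $h''(m_-(\delta))=0$, and $h'''(m_-(\delta))=\rho'''(m_-(\delta))-g''(m_-(\delta))$. By matching $g$ to $\rho'$ to second order and choosing the next jet of $g$ appropriately (or simply by noting $h'=\rho'-g>0$ on a right-neighborhood once $g<\theta_0=\rho'(m_-(\delta))$ strictly for $s$ slightly larger), we get $h>0$ just to the right of $m_-(\delta)$; then the real work is a quantitative choice of $g$ — small on the whole interval $(m_-(\delta),\Lambda]$ — guaranteeing $h'=\rho'-g$ stays positive, or at least that $h$ itself never returns to $0$, on all of $(m_-(\delta),\Lambda]$. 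Since $\Lambda<m_+(\delta)$ and $\rho(\Lambda)>\rho(m_-(\delta))=\delta$ is not directly needed, only the gap $\rho-\rho^*$ matters, and this is arranged by taking $g\le\theta$ throughout $(m_-(\delta),\Lambda]$ after the initial second-order transition, so that $\rho^*(s)-\rho^*(m_-(\delta))\le\theta(s-m_-(\delta))$ while $\rho(s)-\rho(m_-(\delta))$ can be bounded below; choosing $\theta$ small enough relative to $\Lambda$ closes this.

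Finally, for the last assertion, define $f(0)=1$, $f(s)=\rho^*(\sqrt s)/\sqrt s$ for $s>0$. Writing $\psi(s)=\rho^*(s)$, one has $sf(s^2)=\psi(s)$, so $(sf(s^2))'=\psi'(s)=(\rho^*)'(s)\in[\theta,\Theta]$, which is exactly the equivalent form of $(\ref{1-2})$ noted in the text; the $C^3$ regularity of $f$ on $[0,\infty)$ follows because $\rho^*$ is $C^3$ with $\rho^*(0)=0$ and $(\rho^*)'(0)=\rho'(0)=1$, so the Taylor expansion of $\rho^*$ at $0$ has only odd-order ambiguity absorbed by the substitution $s\mapsto\sqrt s$ — concretely, $\rho^*$ being $C^3$ with $\rho^*(0)=0$ makes $s\mapsto\rho^*(\sqrt s)/\sqrt s$ extend to a $C^{3}$ function near $0$ by the standard lemma on even/odd smooth functions, or more simply because on $[0,m_-(\delta)]$ we have $f(s)=\rho(\sqrt s)/\sqrt s=1/(1+s)$, which is real-analytic there, and on $(0,\infty)$ smoothness of $f$ is inherited from $\rho^*$. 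The main obstacle I anticipate is purely the bookkeeping in the second paragraph: exhibiting an explicit $C^2$ function $g$ on $(m_-(\delta),\Lambda]$ that simultaneously respects the second-order matching at $m_-(\delta)$, the two-sided bound $[\theta,\Theta]$, and the strict inequality $\rho^*<\rho$ up to $\Lambda<m_+(\delta)$ — which is why the paper, following \cite{CZ, Zh}, states this as a lemma and omits the elementary but fiddly construction.
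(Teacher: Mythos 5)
The paper does not actually prove this lemma (it defers to the constructions in \cite{CZ,Zh}), and your overall architecture --- build $g=(\rho^*)'$ as a $C^2$ function with values in $[\theta,\Theta]$, equal to $\rho'$ on $[0,m_-(\delta)]$ and matched to second order there, then integrate; handle $f$ near $s=0$ via the explicit formula $f(s)=1/(1+s)$ on $[0,m_-(\delta)^2]$ and the identity $sf(s^2)=\rho^*(s)$ for (\ref{1-2}) --- is exactly the intended route, and your last paragraph on $f$ is correct. However, the two justifications you offer for the key inequality $\rho^*<\rho$ on $(m_-(\delta),\Lambda]$ are wrong as written. First, keeping $g$ valued in $(0,\theta_0)$ with $\theta_0=\rho'(m_-(\delta))$ does \emph{not} force $g<\rho'$ on a right-neighborhood of $m_-(\delta)$: $\rho'$ is strictly decreasing there, so $\rho'(s)<\theta_0$ for $s>m_-(\delta)$, while continuity forces $g(s)\to\theta_0$ as $s\to m_-(\delta)^+$; both functions approach the same value, so the comparison must be made at the level of jets, e.g.\ $g=\rho'-c\,(s-m_-(\delta))^3$ near $m_-(\delta)$, which preserves the $C^2$ matching and makes $h'=\rho'-g>0$ immediately to the right (your ``choose the next jet'' alternative is the one that works; the parenthetical shortcut does not). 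Second, you cannot have $g\le\theta$ on all of $(m_-(\delta),\Lambda]$ and conclude $\rho^*(s)-\rho^*(m_-(\delta))\le\theta\,(s-m_-(\delta))$: on the transition zone $g$ is near $\theta_0>\theta$, so that bound fails precisely where $\rho(s)-\rho(m_-(\delta))$ is small (it vanishes at $m_-(\delta)$, so it admits no positive lower bound there either).

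The repair stays inside your scheme but needs the bookkeeping split correctly. Choose $m_-(\delta)<s_0<s_1<1$ close to $m_-(\delta)$ with $\rho(s_1)\le\rho(\Lambda)$ and $\rho'\ge\rho'(s_0)/2$ on $[m_-(\delta),s_1]$; on $[m_-(\delta),s_0]$ let $g$ descend from $\rho'$ (matched to second order, strictly below $\rho'$ in the interior) to the level $\theta_1:=\rho'(s_0)/2$, which yields $c_0:=\int_{m_-(\delta)}^{s_0}(\rho'-g)\,ds>0$; then set $\theta:=\min\{\theta_1,\,c_0/(2(\Lambda-m_-(\delta)))\}$, let $g$ descend from $\theta_1$ to $\theta$ on $[s_0,s_1]$ (still $\le\rho'$ there, so $h=\rho-\rho^*$ stays $\ge c_0$), and take $g\equiv\theta$ on $[s_1,\infty)$. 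For $s\in[s_1,\Lambda]$ one gets $h(s)\ge c_0+(\rho(s)-\rho(s_1))-\theta(\Lambda-m_-(\delta))>0$ because $\rho\ge\rho(s_1)$ on $[s_1,\Lambda]$. Note this last step is exactly where the hypothesis $\Lambda<m_+(\delta)$ enters, contrary to your remark that $\rho(\Lambda)>\delta$ ``is not directly needed'': since $(\rho^*)'\ge\theta>0$ forces $\rho^*>\delta$ to the right of $m_-(\delta)$, the inequality $\rho^*<\rho$ up to $\Lambda$ is impossible unless $\rho>\delta$ on $(m_-(\delta),\Lambda]$, i.e.\ unless $\Lambda<m_+(\delta)$.
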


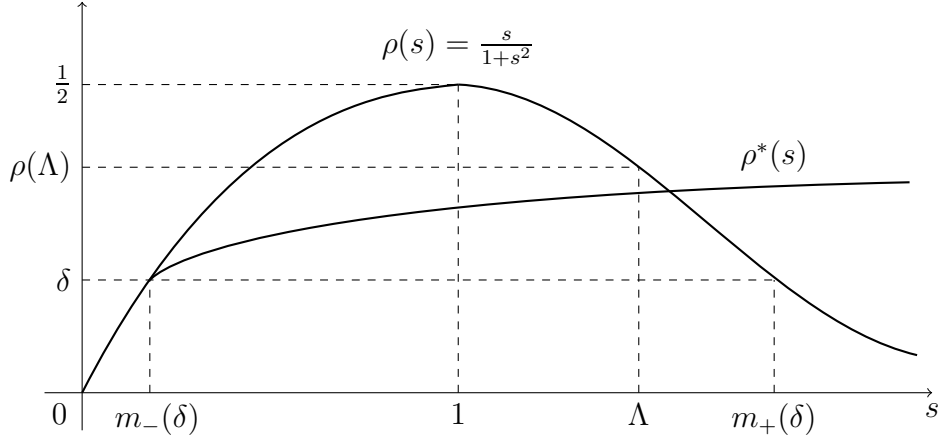
\begin{figure}[h]\label{fig1}
\begin{center}
\begin{tikzpicture}[scale = 1]
    \draw[->] (-.5,0) -- (11.3,0);
	\draw[->] (0,-.5) -- (0,5.2);

 \draw[dashed] (0,4.1)--(5,4.1);
    \draw[dashed] (5, 0)  --  (5, 4.1) ;
	\draw[thick]   (0, 0) .. controls  (2, 3.9) and (4.1,4) ..(5,4.1);
	\draw[thick]   (5, 4.1) .. controls  (7.2, 4) and (9,1) ..(11.1,0.5 );
\draw[thick]   (0.9,1.5) .. controls  (1,1.6) and (2,2.6) ..(11,2.8);
	\draw (11.3,0) node[below] {$s$};
    \draw (-0.3,0) node[below] {{$0$}};
    \draw (5, 4.2) node[above] {$\rho(s)=\frac{s}{1+s^2}$};
    \draw (0, 4.1) node[left] {$\frac12$};
   \draw (0, 1.5) node[left] {$\delta$};
   \draw (0, 3) node[left] {$\rho(\Lambda)$};
 \draw[dashed] (0,1.5)--(9.2, 1.5);
 \draw[dashed] (0,3)--(7.4, 3);
 \draw[dashed] (7.4,0)--(7.4, 3);
\draw[dashed] (9.2,0)--(9.2, 1.5);
 \draw (9.2,0) node[below] {$m_+(\delta)$};
\draw (9.2,3.5) node[below] {$\rho^*(s)$};
 \draw (7.4,0) node[below] {$\Lambda$};
    \draw (5, 0) node[below] {$1$};
 \draw[dashed] (0.9,0)--(0.9, 1.5);
 \draw (1, 0) node[below] {$m_-(\delta)$};
    \end{tikzpicture}
\end{center}
\caption{The graphs of function  $\rho(s)$ and the modified function $\rho^*(s)$ in Lemma \ref{lem-modi}.}
\end{figure}

\subsection{Right inverse of the divergence operator} To deal with  the linear constraint $\dv v=u$, we follow an argument of \cite[Lemma 4]{BB} to construct  a right inverse $\mathcal R$ of the divergence operator: $\dv \mathcal R=Id$ (in the sense of distributions in $\Omega_T$). For the purpose of this paper, the construction of $\mathcal R$ is restricted to the {\em box} domains, by which we mean  domains given by $Q=J_1\times J_2\times \cdots\times J_n$, where $J_i=(a_i,b_i)\subset\R$ is a finite open  interval.

Given such a box $Q$, we define a linear operator $\mathcal R_n\colon L^\infty(Q)\to L^\infty(Q;\mathbf R^n)$ inductively on dimension $n$.
If $n=1$, for $u\in L^\infty(J_1)$, we define $v=\mathcal R_1 u$ by
\[
v(x_1)=\int_{a_1}^{x_1} u(s)ds\quad (x_1\in J_1).
\]
Assume $n=2$. Let $u\in L^\infty(J_1\times J_2).$ Set
$\tilde u(x_1)=\int_{a_2}^{b_2} u(x_1,s)\,ds$ for $x_1\in J_1.$ Then $\tilde u\in L^\infty(J_1).$ Let $\tilde v=\mathcal R_1\tilde u$; that is,
\[
\tilde v(x_1)=\int_{a_1}^{x_1}\tilde u(s)ds=\int_{a_1}^{x_1}\int_{a_2}^{b_2} u(s,\tau)\,d\tau ds \quad (x_1\in J_1).
\]
 Let $\rho_2\in C^\infty_c(a_2,b_2)$ be such that $0\le \rho_2(s)\le\frac{C_0}{b_2-a_2}$ and $\int_{a_2}^{b_2} \rho_2(s) ds=1.$
Define $v=\mathcal R_2 u\in L^\infty(J_1\times J_2;\mathbf R^2)$  by $v=(v^1,v^2)$ with $v^1 (x_1,x_2)=\rho_2(x_2) \tilde v(x_1)$ and
\[
v^2(x_1,x_2) =\int_{a_2}^{x_2}u(x_1,s)ds -\tilde u(x_1) \int_{a_2}^{x_2} \rho_2(s) ds.
\]
Note that if $u\in W^{1,\infty}(J_1\times J_2)$ then $\tilde u\in W^{1,\infty}(J_1)$; hence $v=\mathcal R_2 u\in W^{1,\infty}(J_1\times J_2;\mathbf R^2)$ and $\dv v = u$ a.e.\,in $J_1\times J_2.$ Moreover, if $u\in C^1(\overline{J_1\times J_2})$ then $v$ is in $C^1(\overline{J_1\times J_2};\mathbf R^2).$

Assume that we have defined the operator $\mathcal R_{n-1}$. Let $u\in L^\infty(Q)$ with $Q=J_1\times J_2\times \cdots\times J_n$  and $x=(x',x_n)\in Q$, where $x'\in Q'=J_1\times\cdots\times J_{n-1}$ and $x_n\in J_n.$ Set
$\tilde u(x')=\int_{a_n}^{b_n} u(x',s)\,ds$ for $x'\in Q'.$
Then $\tilde u\in L^{\infty}(Q').$ By  the  assumption, $\tilde v =\mathcal R_{n-1}\tilde u\in L^\infty(Q';\mathbf R^{n-1})$ is defined.  Write $\tilde v(x')=(Z^1(x'),\cdots,Z^{n-1}(x')),$ and let $\rho_n\in C^\infty_c(a_n,b_n)$ be a  function satisfying
$0\le \rho_n(s)\le\frac{C_0}{b_n-a_n}$ and $\int_{a_n}^{b_n} \rho_n(s) ds=1.$
Define $v=\mathcal R_n u\in L^\infty(Q;\mathbf R^n)$ as follows.  For $x=(x',x_n)\in Q,$  $v(x)=(v^1(x),v^2(x),\cdots,v^n(x))$ is defined by
\[
\begin{split}
& v^k(x',x_n)=\rho_n(x_n) Z^k(x') \quad  (k=1,2,\cdots, n-1),\\
&v^n (x',x_n)=\int_{a_n}^{x_n}u(x',s)ds -\tilde u(x') \int_{a_n}^{x_n} \rho_n(s) ds.
\end{split}
\]
Then  $\mathcal R_n \colon L^\infty(Q)\to L^\infty(Q;\mathbf  R^n)$  is a well-defined linear operator; moreover,
\begin{equation}\label{div-0}
 \|\mathcal R_n  u\|_{L^\infty(Q)}  \le C_n \,(|J_1|+\cdots+|J_n|) \|u\|_{L^\infty(Q)},
\end{equation}
where $C_n>0$ is a constant depending only on $n.$

As in the case $n=2$, we  see that if $u\in W^{1,\infty}(Q)$ then $v=\mathcal R_n u\in W^{1,\infty}(Q;\mathbf R^n)$ and
$\dv v = u$ a.e.\,in $Q$. Also, if $u\in C^1(\bar Q)$ then $v=\mathcal R_n u$ is in $C^1(\bar Q;\mathbf R^n).$ Moreover,   if $u\in W^{1,\infty}_0(Q)$ satisfies  $\int_Q u(x)dx=0$, then one can easily show that $v=\mathcal R_n u\in W_0^{1,\infty}(Q;\mathbf R^n).$

Let  $I$ be a finite open interval in $\mathbf R$. We now  extend the operator $\mathcal R_n$ to an operator $\mathcal R$ on $L^\infty(Q\times I)$ by defining,  for a.e.\,$(x,t)\in Q\times I$,
\begin{equation}\label{def-R}
(\mathcal R u)(x,t)=(\mathcal R_n u(\cdot,t))(x) \quad \forall\; u\in L^\infty(Q\times I).
\end{equation}
 Then $\mathcal R \colon  L^\infty(Q\times I)\to L^\infty(Q\times I;\mathbf R^n)$ is a bounded linear operator.

We have the following result.

\begin{thm}\label{div-inv} Let $u\in W^{1,\infty}_0(Q\times I)$ satisfy  $\int_{Q}u(x,t)\,dx=0$ for all $t\in I$. Then $v=\mathcal R u\in W^{1,\infty}_0(Q\times I;\mathbf R^n)$,  $\dv v=u$ a.e.\,in $Q\times I$, and
\begin{equation}\label{div-1}
\|v_t \|_{L^\infty(Q\times I)}  \le C_n \,(|J_1|+\cdots+|J_n|) \|u_t\|_{L^\infty(Q\times I)},
\end{equation}
where $Q=J_1\times\cdots\times J_n$ and $C_n$ is the same constant as in $(\ref{div-0})$.  Moreover, if $u\in C^1(\overline{Q\times I})$ then $v =\mathcal R u \in C^1(\overline{Q\times I};\mathbf R^{n}).$
\end{thm}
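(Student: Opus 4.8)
The plan is to deduce the theorem from the properties of the spatial operator $\mathcal R_n$ already recorded in the construction preceding the statement, applied with the time variable $t\in I$ frozen as a parameter; the only genuinely new ingredient will be the control of the $t$-dependence of $v=\mathcal R u$ via the uniform estimate $(\ref{div-0})$. First I would fix a Lipschitz representative of $u$ on the box $\overline{Q\times I}$ (legitimate since $Q\times I$ is convex). Then for every $t\in\bar I$ the slice $u(\cdot,t)$ is Lipschitz on $\bar Q$, vanishes on $\partial Q$ (because $u$ vanishes on $\partial(Q\times I)$), and has $\int_Q u(x,t)\,dx=0$; hence, by the construction of $\mathcal R_n$ together with the remark that $\mathcal R_n$ maps mean-zero elements of $W^{1,\infty}_0(Q)$ into $W^{1,\infty}_0(Q;\R^n)$, we get $v(\cdot,t)=\mathcal R_n u(\cdot,t)\in W^{1,\infty}_0(Q;\R^n)$ with $\dv_x v(\cdot,t)=u(\cdot,t)$ a.e.\ in $Q$. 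Inspecting the explicit formulas for $\mathcal R_n$ one also sees $\|v(\cdot,t)\|_{W^{1,\infty}(Q)}\le C(n,Q)\,\|u\|_{W^{1,\infty}(Q\times I)}$ with $C(n,Q)$ independent of $t$; in particular $v$ and $D_x v$ lie in $L^\infty(Q\times I)$ and $v(\cdot,t)$ is Lipschitz on $\bar Q$ with a $t$-independent constant.

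Next I would handle the time derivative. Since $u$ is Lipschitz, for a.e.\ $x$ the map $t\mapsto u(x,t)$ is absolutely continuous, and the difference quotients $h^{-1}(u(x,t+h)-u(x,t))$ are bounded by $\|u_t\|_{L^\infty(Q\times I)}$ and converge a.e.\ to $u_t(x,t)$. Because $\mathcal R_n$ is linear and acts only through integration in the $x$-variables, dominated convergence inside those integrals gives $\partial_t\bigl(\mathcal R_n u(\cdot,t)\bigr)=\mathcal R_n\bigl(u_t(\cdot,t)\bigr)$ pointwise a.e., i.e.\ the distributional time derivative of $v$ is $v_t=\mathcal R(u_t)$. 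Applying $(\ref{div-0})$ to $u_t(\cdot,t)\in L^\infty(Q)$ for a.e.\ $t$ then yields
\[
|v_t(x,t)|=\bigl|\bigl(\mathcal R_n u_t(\cdot,t)\bigr)(x)\bigr|\le C_n\,(|J_1|+\cdots+|J_n|)\,\|u_t(\cdot,t)\|_{L^\infty(Q)}\le C_n\,(|J_1|+\cdots+|J_n|)\,\|u_t\|_{L^\infty(Q\times I)},
\]
with the \emph{same} constant $C_n$ as in $(\ref{div-0})$; this is $(\ref{div-1})$, and it also shows that $v(x,\cdot)$ is Lipschitz on $\bar I$ uniformly in $x$.

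Combining the two steps, $v$ is Lipschitz in $x$ uniformly in $t$ and Lipschitz in $t$ uniformly in $x$, hence Lipschitz on $\overline{Q\times I}$; therefore $v\in W^{1,\infty}(Q\times I;\R^n)$, and its distributional derivatives are the ones computed above, so $\dv v=u$ a.e.\ in $Q\times I$ by Fubini. For the boundary values: on $\partial Q\times\bar I$ the inclusion $v(\cdot,t)\in W^{1,\infty}_0(Q;\R^n)$ for every $t$ forces $v\equiv 0$; on $\bar Q\times\partial I$ we have $u(\cdot,t)\equiv 0$ for $t\in\partial I$ (Lipschitz representative), so $v(\cdot,t)=\mathcal R_n(0)=0$. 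Since the continuous representative $v=\mathcal R u$ vanishes on all of $\partial(Q\times I)$, we conclude $v\in W^{1,\infty}_0(Q\times I;\R^n)$. Finally, if $u\in C^1(\overline{Q\times I})$, then $u(\cdot,t)\in C^1(\bar Q)$ gives $v(\cdot,t)\in C^1(\bar Q;\R^n)$ for each $t$, and differentiating the defining formulas of $\mathcal R_n$ under the integral sign — valid since $u$ and $u_t$ are continuous on the compact set $\overline{Q\times I}$ — shows that $v$, $D_x v$ and $v_t$ are jointly continuous, so $v\in C^1(\overline{Q\times I};\R^n)$; this last point is cleanly organized as an induction on $n$ mirroring the definition of $\mathcal R_n$, with base case $v(x_1,t)=\int_{a_1}^{x_1}u(s,t)\,ds$.

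The step I expect to be the main obstacle is the passage in the second paragraph: justifying rigorously that the parameter derivative $\mathcal R(u_t)$ really is the weak time derivative of $v$ on the product domain, and, more generally, that the fibrewise $W^{1,\infty}_0(Q)$ information for each $t$ assembles into the genuine $W^{1,\infty}_0(Q\times I)$ statement. Everything else amounts to bookkeeping with the already-established properties of $\mathcal R_n$, with $t$ playing the role of a passive parameter.
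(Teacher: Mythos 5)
Your proposal is correct and follows essentially the same route as the paper's own (much terser) proof: apply $\mathcal R_n$ slice-wise in $t$, identify $v_t=\mathcal R(u_t)$ so that (\ref{div-1}) follows from (\ref{div-0}), read off the vanishing of $v$ on $\partial Q\times\bar I$ from the mean-zero remark about $\mathcal R_n$ and on $\bar Q\times\partial I$ from $u(\cdot,t)\equiv 0$ there, and get the $C^1$ statement directly from the defining formulas. The extra detail you supply (difference quotients plus dominated convergence, and assembling the separate Lipschitz bounds in $x$ and $t$ into joint Lipschitz regularity) is exactly the routine verification the paper leaves implicit.
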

\begin{proof} Given $u\in  W^{1,\infty}_0(Q\times I)$, let $v=\mathcal R u.$ We easily verify that $v$ is Lipschitz continuous in $t$ and hence  $v_t$ exists. It also follows that $v_t=\mathcal R (u_t).$ Clearly, if $\int_Q u(x,t)dx=0$ then $v(x,t)=0$ whenever $t\in \partial I$ or $x\in \partial Q$. This proves $v\in W^{1,\infty}_0(Q\times I;\mathbf R^{n})$ and  the estimate (\ref{div-1}) follows from (\ref{div-0}). Finally, from the definition of $\mathcal R u$, we see that if $u\in C^1(\overline{Q\times I})$ then $v =\mathcal R u \in C^1(\overline{Q\times I};\mathbf R^{n}).$
\end{proof}

 \section{General  setup for existence}
In this section we set up the general procedure for proving our main theorem, Theorem \ref{thm:main-1}.

\subsection{Sufficient conditions for weak solutions} Since our setup differs from the usual formulation of  differential inclusions, we first prove the next two results to clarify some relevant issues, which are elementary but not too obvious.

\begin{lem} \label{gen-lem} Suppose $u\in W^{1,\infty}(\Omega_T)$ is such that $u(x,0)=u_0(x)$ $(x\in\Omega)$,   there exists a vector function $v\in  W^{1,2}((0,T);L^2(\Omega;\mathbf R^n))$ with weak time-derivative $v_t$  satisfying $v_t=\sigma(Du)$ a.e.\,in $\Omega_T$, and  for each $\zeta\in C^\infty(\bar\Omega_T)$ and    $t\in [0,T],$
\begin{equation}\label{div-v=u}
\int_\Omega v(x,t)\cdot D\zeta(x,t)\,dx=-\int_\Omega u(x,t)\zeta(x,t)\,dx.
\end{equation}
Then $u$ is a weak solution to $(\ref{ib-PM})$.
\end{lem}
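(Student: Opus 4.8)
The plan is to show that the hypotheses of Lemma~\ref{gen-lem} imply the integral identity (\ref{bdry-0}) for every $\zeta\in C^\infty(\bar\Omega_T)$ and every $s\in[0,T]$. The natural starting point is the time-regularity of $v$: since $v\in W^{1,2}((0,T);L^2(\Omega;\R^n))$ with $v_t=\sigma(Du)\in L^\infty(\Omega_T;\R^n)$, the abstract function $t\mapsto v(\cdot,t)$ is (after modification on a null set) absolutely continuous, indeed Lipschitz, from $[0,T]$ into $L^2(\Omega;\R^n)$, so $v(\cdot,s)-v(\cdot,0)=\int_0^s v_t(\cdot,t)\,dt=\int_0^s\sigma(Du(\cdot,t))\,dt$ as a Bochner integral in $L^2(\Omega;\R^n)$. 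First I would fix $\zeta\in C^\infty(\bar\Omega_T)$ and $s\in[0,T]$ and pair this identity against $D\zeta(\cdot,s)$: this gives
\[
\int_\Omega v(x,s)\cdot D\zeta(x,s)\,dx - \int_\Omega v(x,0)\cdot D\zeta(x,s)\,dx = \int_0^s\!\!\int_\Omega \sigma(Du(x,t))\cdot D\zeta(x,s)\,dx\,dt.
\]
This is not yet what I want, because the test function in (\ref{bdry-0}) is $D\zeta(x,t)$, not $D\zeta(x,s)$; the discrepancy is handled below by differentiating in $s$.

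The cleaner route is to define, for $\zeta\in C^\infty(\bar\Omega_T)$, the scalar function
\[
g(s):=\int_\Omega v(x,s)\cdot D\zeta(x,s)\,dx,\qquad s\in[0,T],
\]
and compute $g'(s)$. Since $s\mapsto v(\cdot,s)$ is Lipschitz into $L^2$ with derivative $\sigma(Du(\cdot,s))$, and $s\mapsto D\zeta(\cdot,s)$ is $C^1$ into $L^2$ (indeed $C^\infty$), the product rule for the $L^2$ pairing gives, for a.e.\ $s$,
\[
g'(s)=\int_\Omega \sigma(Du(x,s))\cdot D\zeta(x,s)\,dx + \int_\Omega v(x,s)\cdot D\zeta_t(x,s)\,dx.
\]
Now I invoke hypothesis (\ref{div-v=u}) with the test function $\zeta_t(\cdot,s)\in C^\infty(\bar\Omega)$ (for fixed $s$, $\zeta_t(\cdot,s)$ is a legitimate smooth function on $\bar\Omega$): the second term equals $-\int_\Omega u(x,s)\zeta_t(x,s)\,dx$. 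Hence
\[
g'(s)=\int_\Omega \sigma(Du(x,s))\cdot D\zeta(x,s)\,dx - \int_\Omega u(x,s)\zeta_t(x,s)\,dx \quad\text{for a.e. } s\in[0,T].
\]
Since $g$ is Lipschitz (both factors being Lipschitz into $L^2$), $g(s)-g(0)=\int_0^s g'(t)\,dt$, i.e.
\[
\int_\Omega v(x,s)\cdot D\zeta(x,s)\,dx - \int_\Omega v(x,0)\cdot D\zeta(x,0)\,dx = \int_0^s\!\!\int_\Omega\bigl(\sigma(Du)\cdot D\zeta - u\zeta_t\bigr)\,dx\,dt.
\]
Finally I apply (\ref{div-v=u}) once more, at $t=s$ and at $t=0$, to rewrite the left-hand side: $\int_\Omega v(x,s)\cdot D\zeta(x,s)\,dx=-\int_\Omega u(x,s)\zeta(x,s)\,dx$ and $\int_\Omega v(x,0)\cdot D\zeta(x,0)\,dx=-\int_\Omega u(x,0)\zeta(x,0)\,dx=-\int_\Omega u_0(x)\zeta(x,0)\,dx$, using the initial condition $u(\cdot,0)=u_0$. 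Substituting yields exactly (\ref{bdry-0}), so $u$ is a weak solution.

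The only genuinely delicate point, and the one I would write out carefully, is the justification of the product rule for $g'(s)$ and the absolute continuity of $g$ — i.e., that an $L^2$-valued Lipschitz function paired with a smooth $L^2$-valued function produces a scalar Lipschitz function whose derivative is given by the expected sum. This follows from the vector-valued fundamental theorem of calculus (a $W^{1,\infty}((0,T);L^2)$ function is a.e.\ equal to an absolutely continuous representative equal to the Bochner integral of its derivative) combined with bilinearity and continuity of the inner product; the cross term $\int_\Omega v(x,s)\cdot D\zeta_t(x,s)\,dx$ is the one that must be fed into hypothesis (\ref{div-v=u}), and one should note that (\ref{div-v=u}) as stated applies to $\zeta\in C^\infty(\bar\Omega_T)$ evaluated at time $t$, so applying it to $\zeta_t$ requires observing that $\zeta_t$ is itself in $C^\infty(\bar\Omega_T)$ and that its value at time $s$ is $\zeta_t(\cdot,s)$. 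Everything else is bookkeeping. One small alternative worth mentioning: instead of differentiating $g$, one can integrate the first displayed identity in $s$ by Fubini after approximating $\zeta$, but the differentiation argument is shorter and avoids approximation.
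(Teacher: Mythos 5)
Your proof is correct and follows essentially the same route as the paper: both arguments rest on applying (\ref{div-v=u}) to $\zeta$ and to $\zeta_t$, using $v_t=\sigma(Du)$, and integrating in time a product-rule identity for $\int_\Omega v\cdot D\zeta\,dx$ (equivalently $-\int_\Omega u\zeta\,dx$). The only cosmetic difference is that you differentiate the absolutely continuous ($L^2$-valued Lipschitz) representative pointwise a.e., whereas the paper obtains the same derivative formula weakly by testing against $\psi\in C^\infty_c(0,T)$.
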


\begin{proof}  To verify (\ref{bdry-0}), given any $\zeta\in C^\infty(\bar\Omega_T)$, let
\[
g(t)=\int_\Omega u(x,t)\zeta(x,t)dx,\quad h(t)=\int_\Omega u(x,t)\zeta_t(x,t)dx \quad (0\le t\le T).
\]
 Then for each $\psi\in C^\infty_c(0,T),$ by (\ref{div-v=u}),
\begin{eqnarray*} 
\int_0^T \psi_t(t) g(t) \,dt  = -\int_0^T\int_\Omega \psi_t(t) v(x,t)\cdot D\zeta(x,t)\,dxdt,\\
\int_0^T \psi(t) h(t) \,dt = -\int_0^T\int_\Omega \psi(t) v(x,t)\cdot D\zeta_t(x,t)\,dxdt.
\end{eqnarray*}
Since $v\in W^{1,2}((0,T);L^2(\Omega;\mathbf R^n))$ and $v_t=\sigma(Du)$, one has
\[
\int_0^T\int_\Omega (\psi(t) D\zeta(x,t))_t\cdot v(x,t) dxdt=  - \int_0^T\int_\Omega  \psi (t)   \sigma(Du(x,t))\cdot D\zeta(x,t)\,dxdt.
\]
Now as  $(\psi  D\zeta )_t=\psi_t D\zeta + \psi D\zeta_t$, combining  previous equations, we have
\[
\int_0^T \psi_t(t) g(t) \,dt= \int_0^T \psi(t) \left (-h(t)+\int_\Omega    \sigma(Du(x,t))\cdot D\zeta(x,t)\,dx\right )dt;
\]
this proves that $g$ is weakly differentiable in $(0,T)$ with weak derivative  
\[
g'(t)=h(t)-\int_\Omega \sigma(Du(x,t))\cdot D\zeta(x,t)\,dx \; \; \mbox{ a.e. $t\in(0,T)$.}
\]
From this, upon integrating,  (\ref{bdry-0}) follows for all $s\in [0,T].$
\end{proof}

Let $\Phi=(u^*, v^*)\in W^{1,\infty}(\Omega_T;\mathbf R^{1+n})$  satisfy  (\ref{bdry-1}),  and let $W^{1,\infty}_{u^*}(\Omega_T)$ and $W^{1,\infty}_{v^*}(\Omega_T;\mathbf R^n)$ denote the usual Dirichlet classes with boundary traces $u^*$,  $v^*$, respectively.

Let $\mathcal U$ be   {\em some} nonempty and bounded subset of $ W^{1,\infty}_{u^*}(\Omega_T)$ such that for each $u\in \mathcal U$,  there exists a vector function $v\in W_{v^*}^{1,\infty}(\Omega_T; \mathbf R^n)$ satisfying  $\dv v=u$ a.e.\,in $\Omega_T$ and  $\|v_t\|_{L^\infty(\Omega_T)}\le 1/2.$ Any such set $\mathcal U$ is called an \emph{admissible class}.

Given any $\epsilon>0$, define $\mathcal U_\epsilon$ to be the set of  $u\in\mathcal U$ such that  there exists a vector function $v\in W_{v^*}^{1,\infty}(\Omega_T; \mathbf R^n)$ satisfying  $\dv v=u$ a.e.\,in $\Omega_T$,  $\|v_t\|_{L^\infty(\Omega_T)}\le 1/2$,   and
\[
\int_{\Omega_T} |v_t(x,t)-\sigma(Du(x,t))|\,dxdt \leq\epsilon|\Omega_T|.
\]
(Note that $\mathcal U_\epsilon =\mathcal U$ for all $\epsilon\geq 1.$)

\begin{lem}\label{lem32}  Let $u\in\mathcal U.$ Then any vector function $v\in W_{v^*}^{1,\infty}(\Omega_T; \mathbf R^n)$ determined  above satisfies the integral identity $(\ref{div-v=u})$ for each $\zeta\in C^\infty(\bar\Omega_T)$ and $t\in [0,T]$.
\end{lem}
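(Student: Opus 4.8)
The plan is to fix $t\in[0,T]$ and $\zeta\in C^\infty(\bar\Omega_T)$ and to obtain $(\ref{div-v=u})$ by applying the Gauss--Green theorem on $\Omega$ to the spatial vector field $x\mapsto \zeta(x,t)\,v(x,t)$, the point being that the resulting boundary integral over $\partial\Omega$ vanishes because of the Neumann-type condition $v^*\cdot\n=0$ from $(\ref{bdry-1})$; note that only $v\in W^{1,\infty}_{v^*}(\Omega_T;\R^n)$ with $\dv v=u$ a.e.\,is used, not the bound $\|v_t\|_{L^\infty}\le1/2$. As a preliminary step I would pass to Lipschitz representatives: since $\Omega_T$ is a bounded Lipschitz domain, $W^{1,\infty}(\Omega_T)\hookrightarrow C^{0,1}(\bar\Omega_T)$, so $u$, $v$, and $v^*$ may be regarded as Lipschitz functions on $\bar\Omega_T$. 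It then follows from dominated convergence that both sides of $(\ref{div-v=u})$, viewed as functions of $t$, are continuous on $[0,T]$; hence it suffices to establish $(\ref{div-v=u})$ for a.e.\,$t\in[0,T]$.

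For the main step, a standard Fubini slicing argument shows that for a.e.\,$t\in[0,T]$ the slice $v(\cdot,t)$ lies in $W^{1,\infty}(\Omega;\R^n)\cap C(\bar\Omega;\R^n)$ and, since $\dv v=u$ a.e.\,in $\Omega_T$, its weak spatial divergence equals $u(\cdot,t)$ a.e.\,in $\Omega$. For such $t$ the field $V:=\zeta(\cdot,t)\,v(\cdot,t)$ belongs to $W^{1,\infty}(\Omega;\R^n)\cap C(\bar\Omega;\R^n)$ with $\dv V = D\zeta(\cdot,t)\cdot v(\cdot,t)+\zeta(\cdot,t)\,u(\cdot,t)$ a.e.\,in $\Omega$, and the divergence theorem on the $C^{2+\alpha}$ (hence Lipschitz) domain $\Omega$ gives
\[
\int_\Omega\big(D\zeta(x,t)\cdot v(x,t)+\zeta(x,t)\,u(x,t)\big)\,dx=\int_{\partial\Omega}\zeta(x,t)\,v(x,t)\cdot\n(x)\,dS(x).
\]

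To conclude, I would note that the boundary integrand vanishes identically: since $v-v^*\in W^{1,\infty}_0(\Omega_T;\R^n)$ is continuous up to $\partial\Omega_T$, it vanishes pointwise there, so $v=v^*$ on the lateral boundary $\partial\Omega\times[0,T]$, whence $v(x,t)\cdot\n(x)=v^*(x,t)\cdot\n(x)=0$ for $x\in\partial\Omega$ by the third line of $(\ref{bdry-1})$. Thus the right-hand side above is zero, which is precisely $(\ref{div-v=u})$ for a.e.\,$t$, and the continuity reduction of the first step upgrades this to every $t\in[0,T]$. The argument is essentially routine; the only points needing a little care are the Fubini slicing that transfers ``$\dv v=u$ a.e.\,in $\Omega_T$'' to ``$\dv_x v(\cdot,t)=u(\cdot,t)$ a.e.\,in $\Omega$ for a.e.\,$t$'' and the identification of the boundary trace of $v$ with $v^*|_{\partial\Omega\times[0,T]}$, which is what licenses using $v^*\cdot\n=0$ to kill the boundary term.
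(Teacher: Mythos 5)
Your proposal is correct and takes essentially the same approach as the paper: both proofs reduce $(\ref{div-v=u})$ to the spatial divergence theorem on $\Omega$, annihilate the boundary term using the Lipschitz trace identification $v=v^*$ on $\partial\Omega\times[0,T]$ together with $v^*(\cdot,t)\cdot\n=0$ from $(\ref{bdry-1})$, and pass from almost every $t$ to every $t\in[0,T]$ by continuity. The only difference is packaging: the paper tests the continuous function $h(t)=\int_\Omega\bigl(v\cdot D\zeta+u\zeta\bigr)dx$ against $\psi\in C^1[0,T]$ and works on the space-time cylinder, whereas you slice in $t$ via Fubini; the substance is identical.
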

\begin{proof} Let $\zeta\in C^\infty(\bar\Omega_T)$ and define 
\[
h(t)=\int_\Omega \left (v(x,t)\cdot D\zeta(x,t)+u(x,t)\zeta(x,t)\right)dx.
\]
Then $h$ is continuous on $[0,T]$ and for each $\psi\in C^1[0,T]$,
\[
\begin{split} \int_0^T h(t)\psi(t)\,dt  &=\int_0^T \int_{\Omega} \psi(t)\left (v(x,t)\cdot D\zeta(x,t)+u(x,t)\zeta(x,t)\right)dxdt  
\\
 &=\int_0^T \int_{\Omega} \psi(t)\left (v(x,t)\cdot D\zeta(x,t)+\dv v (x,t)\zeta(x,t)\right)dxdt  \\
 &=\int_0^T \int_{\Omega}\dv (\zeta(x,t)\psi(t)v(x,t))\,dxdt  \\
 &= \int_0^T \int_{\Omega} \dv (\zeta(x,t)\psi(t)v^*(x,t))\,dxdt=0, 
\end{split}
\]
resulting from $v|_{\partial\Omega_T}=v^*|_{\partial\Omega_T}$ and $v^*(\cdot,t)\cdot \n|_{\partial\Omega}=0$ for all $t\in [0,T].$ Hence $h\equiv 0$ on $[0,T]$. This completes the proof.
\end{proof}

\subsection{General existence theorem by Baire's category method} We  prove a general existence theorem under  a density hypothesis.

\begin{thm}\label{thm:main}  Let $\mathcal U\subset W^{1,\infty}_{u^*}(\Omega_T)$ be an admissible class. Assume, for each $\epsilon>0$,  $\mathcal U_\epsilon$ is dense in  $\mathcal U$ under the $L^\infty$-norm.  Then,  given any $\varphi\in \mathcal U$, for each $\eta>0,$
there exists a weak solution $u\in  W_{u^*}^{1,\infty}(\Omega_T)$ to problem $(\ref{ib-PM})$ satisfying
$\|u-\varphi\|_{L^\infty(\Omega_T)}<\eta.$ Furthermore,  if $\mathcal U$ contains a function in $W^{1,\infty}_{u^*}(\Omega_T)$ that is not a weak solution to $(\ref{ib-PM}),$ then $ (\ref{ib-PM})$ admits  infinitely  many weak solutions.
\end{thm}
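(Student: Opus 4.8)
The plan is to realize the required weak solutions as points in a suitable complete metric space on which the solution operator becomes a Baire-residual set, following the now-standard Baire category strategy of \cite{DM1, MSy, Ki} but adapted to our admissible class $\mathcal U$ of scalar functions $u$ (with the auxiliary $v$'s kept in the background). First I would fix the metric space: let $X$ be the closure of $\mathcal U$ in the $L^\infty(\Omega_T)$-norm. Since $\mathcal U$ is bounded in $W^{1,\infty}_{u^*}(\Omega_T)$, its $L^\infty$-closure $X$ consists of functions with the correct boundary trace $u^*$, it is bounded in $W^{1,\infty}$, and $(X, L^\infty)$ is a complete (separable) metric space. The key object is the "defect" functional
\[
F(u)=\int_{\Omega_T}\bigl|v_t(x,t)-\sigma(Du(x,t))\bigr|\,dx\,dt,
\]
where for $u\in\mathcal U$ we choose (and fix, say via the operator $\mathcal R$ of Theorem \ref{div-inv} in the construction of $\mathcal U$) a companion $v$; one must check $F$ extends to an upper-semicontinuous functional on $X$, or more precisely that the set of $u\in X$ at which a natural Baire-one representative of $F$ vanishes is residual. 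The standard device is to write $F$ as a pointwise infimum (hence upper semicontinuous on $X$) by noting $F(u)\le\liminf F(u_k)$ fails in general, so instead one works directly with the sets $\mathcal U_\epsilon$.

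The core of the argument: I claim that for each $\epsilon>0$ the set $\mathcal U_\epsilon$ contains a set that is open in $X$. This is where the density hypothesis enters. Actually the cleaner route, which I would take, is the following. Let $V_k$ be the $L^\infty$-interior (in $X$) of the $L^\infty$-closure of $\mathcal U_{1/k}$. By the density hypothesis each $\mathcal U_{1/k}$ is dense in $\mathcal U$, hence dense in $X$, hence $\overline{\mathcal U_{1/k}}^{L^\infty}=X$; but that alone does not make $V_k$ large. So instead I would invoke the abstract lemma underlying this method (e.g.\ \cite[Lemma]{DM1} or the argument in \cite{Ki,MSy}): define $\Lambda(u)=\limsup_{j\to\infty}F(u_j)$ over sequences $u_j\in\mathcal U$ with $u_j\to u$ in $L^\infty$; this $\Lambda$ is a Baire-one function on $X$, so its points of continuity form a residual set $R\subset X$. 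At any $u\in R$ one uses the density of $\mathcal U_\epsilon$ together with continuity of $\Lambda$ at $u$ to conclude $\Lambda(u)=0$, hence $F$ "vanishes" at $u$ in the appropriate approximate sense; combined with the $W^{1,\infty}$-bound and weak-$*$ compactness one gets a genuine $v$ with $v_t=\sigma(Du)$ a.e. Then Lemma \ref{gen-lem} (whose hypotheses include exactly $v_t=\sigma(Du)$ a.e., $v\in W^{1,2}((0,T);L^2(\Omega;\R^n))$, the distributional identity $\dv v=u$ from Lemma \ref{lem32}, and $u(\cdot,0)=u_0$) shows every such $u\in R$ is a weak solution of (\ref{ib-PM}).

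From this the two conclusions follow quickly. Given $\varphi\in\mathcal U$ and $\eta>0$, the ball $B_\eta(\varphi)\cap X$ is a nonempty open subset of the complete metric space $X$, so it meets the residual set $R$; any $u\in R\cap B_\eta(\varphi)$ is a weak solution with $\|u-\varphi\|_{L^\infty(\Omega_T)}<\eta$. For infinitely many solutions: if some $\varphi_0\in\mathcal U$ is \emph{not} a weak solution, then in particular $\varphi_0\notin R$, so $R\subsetneq X$; but $R$ is residual (dense), hence $R$ is an infinite set. More carefully, to get genuinely infinitely many \emph{distinct} solutions one argues by contradiction: if the weak-solution set were finite, say $\{w_1,\dots,w_m\}$, pick $\varphi_0\in\mathcal U$ non-solution and a small ball $B$ around $\varphi_0$ missing all $w_i$ (possible since $\varphi_0\ne w_i$ for each $i$); then $B\cap X$ is open nonempty, hence meets $R$, producing a weak solution inside $B$, a contradiction. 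Hence (\ref{ib-PM}) has infinitely many weak solutions.

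The main obstacle I anticipate is the passage from the approximate statement "$\Lambda(u)=0$" to the exact statement "there exists $v$ with $v_t=\sigma(Du)$ a.e." This is the usual subtlety of the Baire method: one has, along an approximating sequence $u_j\to u$ in $L^\infty$ with companions $v_j$ satisfying $\|(v_j)_t\|_{L^\infty}\le 1/2$ and $\int_{\Omega_T}|(v_j)_t-\sigma(Du_j)|\to 0$, and one must extract a limit $v$ (using the uniform $W^{1,\infty}$ and $W^{1,2}((0,T);L^2)$ bounds, weak-$*$ and weak convergence, plus the fact that $\dv$ and the trace constraints are preserved under these weak limits) for which $v_t=\sigma(Du)$ a.e.\ — the delicate point being that $\sigma(Du_j)\to\sigma(Du)$ requires $Du_j\to Du$ in measure, which is \emph{not} automatic from $u_j\to u$ in $L^\infty$. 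The resolution is precisely that $u$ is a point of continuity of the Baire-one function $\Lambda$: continuity at $u$ forces the whole approximating construction to stabilize, and a standard argument (diagonalization plus the lower semicontinuity of $u\mapsto\int|\,v_t-\sigma(Du)|$ under the relevant convergences, or rather the fact that $0\le F(u)\le\Lambda(u)=0$ once $F$ is shown lower semicontinuous along the chosen sequences) yields $Du_j\to Du$ in measure and hence the exact identity. I would therefore isolate, as the technical heart of the proof, the lower semicontinuity / continuity-point argument, citing the analogous lemmas in \cite{DM1, MSy, Ki} and verifying that the scalar-plus-auxiliary-$v$ formulation here does not disrupt them.
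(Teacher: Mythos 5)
Your skeleton (the complete metric space $\mathcal X=\overline{\mathcal U}^{L^\infty}$, the role of the density of $\mathcal U_\epsilon$, the use of Lemmas \ref{gen-lem} and \ref{lem32}, the $\eta$-ball argument, and the finite-versus-infinite argument at the end) matches the paper, but the technical heart is not resolved, and the resolution you sketch would not work. You propose to take as Baire-one object the relaxed defect functional $\Lambda(u)=\limsup_j F(u_j)$ and to work at its points of continuity. First, you give no argument that this $\Lambda$ is Baire-one (it is not a pointwise limit of continuous maps in any evident way), nor that $\Lambda$ vanishes at its continuity points. More seriously, even granting $\Lambda(u)=0$, i.e.\ the existence of $u_j\to u$ in $L^\infty$ with companions $v_j$ and $\int_{\Omega_T}|(v_j)_t-\sigma(Du_j)|\to 0$, continuity of $\Lambda$ at $u$ gives no control whatsoever on $Du_j$: $L^\infty$-closeness of $u_j$ to $u$ says nothing about gradients, and continuity of a functional built only from defect values cannot "force the approximating construction to stabilize." Likewise, the defect $u\mapsto\int|v_t-\sigma(Du)|$ is not lower semicontinuous under $u_j\to u$ in $L^\infty$ and $(v_j)_t\rightharpoonup v_t$ in $L^2$, precisely because the nonlinear term $\sigma(Du_j)$ can oscillate; weak limits do not commute with $\sigma$. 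So the step from "$\Lambda(u)=0$" to "$v_t=\sigma(Du)$ a.e." is a genuine gap, and it is exactly the step your proof must supply.

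The paper closes this gap by a different choice of Baire-one map: not a defect functional, but the spatial gradient operator $D\colon(\mathcal X,L^\infty)\to(L^1(\Omega_T;\mathbf R^n),L^1)$ itself, exhibited as the pointwise limit of the continuous mollified maps $T_h(u)=Du^*+D(\rho_h*w)$, $u=u^*+w$. Baire's theorem then gives a residual set $\mathcal G$ of points where $D$ is $L^\infty\!\to\!L^1$ continuous. At such a $u$, choosing $u_j\in\mathcal U_{1/j}$ with $\|u_j-u\|_{L^\infty}<1/j$ \emph{automatically} yields $Du_j\to Du$ in $L^1$ (hence a.e.\ along a subsequence, so $\sigma(Du_j)\to\sigma(Du)$ strongly), while $(v_j)_t\rightharpoonup v_t$ weakly in $L^2((0,T);L^2)$ and the defect bound $|\Omega_T|/j$ then force $v_t=\sigma(Du)$ a.e.; passing to the limit in the integral identity and applying Lemma \ref{gen-lem} finishes the argument. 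If you replace your $\Lambda$-based step by this device (or supply an equivalent mechanism that genuinely upgrades $L^\infty$-convergence to gradient convergence at residual points), the remainder of your proposal goes through as written.
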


\begin{proof}  
1. Let $\X$ be the closure of $\mathcal U$ in the metric space $L^\infty(\Omega_T).$
Then $(\mathcal X,L^\infty)$ is a complete metric space.  By assumption, $\mathcal U_\epsilon$ is a dense subset of $\X.$ Furthermore, since $\mathcal U$ is bounded in $W_{u^*}^{1,\infty}(\Omega_T)$,  we have  $\X\subset W_{u^*}^{1,\infty}(\Omega_T)$.

2. Let  $\Y =L^1(\Omega_T;\mathbf R^{n})$. For $h>0$, define $T_h\colon  \X \to \Y$ as follows. Given any $u\in X$, write $u=u^* +w$ with $w\in W_0^{1,\infty}(\Omega_T)$ and define
\[
T_h (u) =Du^* + D(\rho_h * w),
\]
where $\rho_h(z)=h^{-N}\rho(z/h)$, with $z=(x,t)$ and $N=n+1$, is the standard mollifier on $\R^{N}$, and $\rho_h * w$ is the usual convolution on $\R^{N}$ with $w$ extended to be zero outside $\Omega_T.$
Then, for each $h>0$, the map $T_h \colon   (\X, L^\infty) \to (\Y, L^1)$ is continuous, and for each $u\in \X$,
\[
\lim_{h\to 0^+} \|T_h (u)-Du\|_{L^1(\Omega_T)}=\lim_{h\to 0^+} \|\rho_h * Dw-Dw\|_{L^1(\Omega_T)}=0.
\]
Therefore, the spatial gradient operator
$D\colon \mathcal X\to \mathcal  Y$ is the pointwise limit of a sequence of continuous functions $T_h \colon \X\to \Y$; hence $D\colon \mathcal X\to \mathcal  Y$ is  a {\em Baire-one function}.
By Baire's category theorem (e.g., \cite[Theorem 10.13]{BBT}), there exists a
{\em residual set} $\G\subset \mathcal X$ such that the operator $D$ is
continuous at each point of $\mathcal G.$  Since $\X\setminus \G$  is of {\em first category}, the set $\G$ is {\em dense} in $\X$. Therefore, given any $\varphi\in \X,$ for each $\eta>0$, there exists a function $u\in \G$ such that $\|u-\varphi\|_{L^\infty(\Omega_T)}<\eta.$

3. We now prove that each function $u\in \G$ is a weak solution to (\ref{ib-PM}).   Let $u\in \G$ be given. By the density of $\mathcal U_\epsilon$ in $(\X,L^\infty)$, for each $j\in\mathbf N$, there exists a function $ u_j\in\mathcal U_{1/j}$ such that $\|u_j-u\|_{L^\infty(\Omega_T)} <1/j$. Since the operator $D\colon (\X, L^\infty)\to (\Y, L^1)$ is continuous at $u$,  we have  $Du_j\to Du$ in $L^1(\Omega_T;\mathbf R^n).$  Furthermore, from the definition of $\mathcal U_{1/j}$, there exists a vector function $v_j\in W^{1,\infty}_{v^*}(\Omega_T;\mathbf R^n)$ such that, for each $\zeta\in C^\infty(\bar\Omega_T)$ and    $t\in [0,T],$
\begin{equation}\label{div-v3}
\begin{split} \int_\Omega v_j(x,t)\cdot D\zeta(x,t)&dx  =-\int_\Omega u_j(x,t)\zeta(x,t)\,dx,\\
 \|(v_j)_t\|_{L^\infty(\Omega_T)}  \le \frac{1}{2},\quad &\int_0^T\int_\Omega |(v_j)_t-\sigma(Du_j)|\,dxdt \leq\frac{1}{j}|\Omega_T|.\end{split}
\end{equation}
Since $v_j(x,0)=v^*(x,0)\in  W^{1,\infty}(\Omega;\R^n)$, from  $\|(v_j)_t\|_{L^\infty(\Omega_T)}  \le 1/2$, it follows that both sequences $\{v_j\}$ and  $\{(v_j)_t\}$ are bounded in $L^2(\Omega_T;\R^n)\approx L^2((0,T);L^2(\Omega;\R^n)).$ We may  assume $v_j \wcon v $ and $(v_j)_t\wcon v_t$ weakly in $L^2((0,T);L^2(\Omega;\R^n))$ for some $v\in W^{1,2}((0,T);L^2(\Omega;\R^n)).$  Upon taking the limit as $j\to \infty$ in (\ref{div-v3}) and noticing $v\in C([0,T];L^2(\Omega;\R^n)),$ we obtain that
\[
\begin{split}
    \int_\Omega  v(x,t)\cdot &D\zeta(x,t)\,dx     =  -\int_\Omega u(x,t)\zeta(x,t)\,dx \quad (t\in [0,T]), \\
&v_t(x,t)= \sigma(Du(x,t)) \quad   a.e. \; (x,t)\in \Omega_T. \end{split}
\]
Consequently, by Lemma \ref{gen-lem}, $u$ is a weak solution to (\ref{ib-PM}).

4. Finally, assume $\mathcal U$ contains a function that is not a weak solution to (\ref{ib-PM}); hence   $\G\ne \mathcal U.$ Then $\G$ cannot be a finite set since otherwise the $L^\infty$-closure $\X=\overline{\G}  =\overline{\mathcal U}$ would be a  finite set, making  $\mathcal U=\G$; therefore, in this case,   (\ref{ib-PM}) admits infinitely many weak solutions.
This completes the proof.
\end{proof}

The rest of the paper is devoted to the construction of a suitable admissible class $\mathcal U\subset W^{1,\infty}(\Omega_T)$ fulfilling the \emph{density property}:
\begin{equation}\label{density-0}
\mbox{$\mathcal U_\epsilon$ is dense in $\mathcal U$ under the $L^\infty$-norm  for each $\epsilon>0.$}
\end{equation}

\section{Geometric considerations: Relaxation of  $\nabla \omega(z)\in K(0)$}

Let $K(s)$ be the matrix set defined by (\ref{set-K}) above. Since $K(s)$ is a translation of set $K(0),$ we focus on  the set  $K_0=K(0)$; that is,
\begin{equation*}
K_0=\left\{ \begin{pmatrix} p & c\\ B & \sigma(p)\end{pmatrix}\,\Big|\,  p\in\R^n, \, c\in\R, \, B\in \mathbf M^{n\times n},\, \tr B=0\right\},
\end{equation*}
where $\sigma(p)=\frac{p}{1+|p|^2}$ is the Perona-Malik function.

\subsection{Rank-one lamination of  $K_0$} We first compute certain rank-one structures of the set $K_0.$

Let $L(K_0)$ be the set of all matrices $\xi\in \mathbf M^{(1+n)\times (n+1)}$ that are not in $K_0$ but are representable by $\xi=\lambda\xi_1+(1-\lambda)\xi_2$ for some $\lambda\in (0,1)$ and $\xi_1,\;\xi_2\in K_0$  with  $\rank(\xi_1-\xi_2)=1$, or equivalently,
\[
L(K_0)=\{ \xi\notin K_0\; |\; \mbox{$\xi+t_\pm\eta \in K_0$ for some  $t_-<0<t_+$ and $\rank\eta=1$}\}.
\]
Suppose $\xi=\begin{pmatrix} p & c\\ B & \beta\end{pmatrix}\in L(K_0)$, with $\xi+t_\pm \eta \in K_0,$ where $t_-<0<t_+$ and $\eta$ is a rank-one matrix given by
\[
\eta =\begin{pmatrix}a\\\alpha\end{pmatrix}\otimes (q,b)=\begin{pmatrix}aq & ab\\\alpha\otimes q & b\alpha\end{pmatrix}, \quad a^2+|\alpha|^2\ne 0,\,\, b^2+|q|^2\ne 0,
\]
for some $a,\, b\in \R$ and $\alpha,  \, q\in \R^n$; here $\alpha\otimes q$ denotes  the  rank-one or zero matrix $(\alpha^i q_j)$ in $\N.$

Condition  $\xi+t_\pm \eta \in K_0$ with $t_-<0<t_+$ is equivalent to the following:
\begin{equation}\label{rk-1}
\tr B=0,\quad \alpha\cdot q=0,\quad
\sigma(p+t_\pm aq)=\beta+t_\pm b\alpha.
\end{equation}
If $aq=0$, then $\sigma(p)=\beta +tb\alpha$ has two different solutions of $t$ only when $b\alpha=0,$ but then we would have $\sigma(p)=\beta$ and thus $\xi\in K_0,$ a contradiction.
Therefore, $aq\ne 0.$ By rescaling $\eta$ and $t_\pm$, we assume $a=1$ and $|q|=1;$ namely,
\[
\eta=\begin{pmatrix} q &  b\\\alpha\otimes q & b\alpha\end{pmatrix},\quad |q|=1,\quad \alpha\cdot q=0.
\]

\subsubsection*{Case 1.} Assume $b\alpha=0.$ In this case, by (\ref{rk-1}), the equation $\sigma(p+tq)=\beta$ has two solutions of $t$ of opposite signs and thus we must have $p=xq$ and $\beta=uq$, and $\sigma(xq+tq)=uq$ becomes  a quadratic equation $x+t=u+u (x^2 +2xt+t^2)$, which has two solutions $t=t_\pm$ of opposite signs if and only if  $u\ne 0$ and $x^2 -\frac{x}{u}+1<0;$ this condition can be written as
\begin{equation*}
 |\beta|^2+(p\cdot\beta)^2-p\cdot\beta=u^2+(xu)^2-xu<0.
\end{equation*}

\begin{remk} In this case  one can always select  $\eta=\begin{pmatrix}q & b\\0& 0\end{pmatrix},\; |q|=1,\; b\in \mathbf R.$ This is the case for the one-dimensional problems studied in \cite{KY, Zh, Zh1},  where the existence  results  are primarily proved based on the structure of such $\eta$'s. However, if $n\ge 2$, such $\eta$'s are not sufficient to characterize all the rank-one structures. {\em Case 2} below thus becomes pivotal.
\end{remk}

\subsubsection*{Case 2.} Assume $b\alpha\ne 0;$ so $b\ne 0$ and $\alpha\ne 0.$   In this case, we write
\[
\eta=\begin{pmatrix} q &  b\\\frac{1}{b}\gamma\otimes q & \gamma\end{pmatrix},\quad |q|=1,\; \gamma\cdot q=0,\; \gamma\ne 0,\; b\ne 0.
\]
Since the equation $\sigma(p+tq)=\beta+t\gamma$ has two solutions  $t=t_\pm$ of opposite signs, we must have  $p=xq+y\gamma$ and $\beta=uq +v\gamma,$ and the equation $\sigma(p+tq)=\beta+t\gamma$  becomes a system of two equations:
\begin{equation}\label{sys1}
\begin{cases}
x+t =u(1+(x+t)^2+|\gamma|^2 y^2),\\
 y=(v+t)(1+(x+t)^2+|\gamma|^2 y^2).
\end{cases}
\end{equation}
This system has two solutions $t=t_\pm$ of opposite signs, and thus $u\ne 0$ and $y\ne 0$. So (\ref{sys1}) is equivalent to a system of two {\em quadratic} equations:
\begin{equation}\label{sys2}
\begin{cases}
  t^2+(2x-\frac{1}{u})t+x^2+|\gamma|^2y^2+1-\frac{x}{u}=0,\\
 t^2+(v+x)t+xv-yu=0.
\end{cases}
\end{equation}
The necessary and sufficient condition for (\ref{sys2}) to have   two solutions  $t=t_\pm$ of opposite signs is that the two quadratic equations of $t$ have the same coefficients and the constant terms are negative, which yields that
\[
 x=\frac{1}{u}+v,\quad x^2+|\gamma|^2 y^2+1  -\frac{x}{u}= xv-yu <0.
\]
Here, if $v=0$, then $x=\frac{1}{u}$, and taking this into the inequality, we have $1+|\gamma|^2y^2<0$, a contradiction. So $v\neq 0$.
Therefore
\begin{equation}\label{cond-20}
 {uv}={xu-1},\quad
|\gamma|^2 = \frac{1-xu}{yv} -\frac{1}{y^2},
\end{equation}
and
\begin{equation}\label{cond-21}
 xv-yu=\left( \frac{x}{u}-\frac{y}{v}\right)(xu-1)<0.
\end{equation}

We now solve for $x,\,y,\,u,\,v$ from (\ref{cond-20}) in terms of $p$ and $\beta.$ From $p=xq+y\gamma,\; \beta=uq +v\gamma$, it follows that
\begin{equation}\label{eq-q-g}
q=\frac{1}{xv-yu}(vp-y\beta),\quad \gamma =\frac{1}{xv-yu}(-up+x \beta).
\end{equation}
By (\ref{cond-20}) and (\ref{eq-q-g}), we have
\[
p\cdot \beta= xu+yv |\gamma|^2=1-\frac{v}{y}, \quad
\frac{x}{u}= \frac{\frac{v}{y}|p|^2-p\cdot \beta}{\frac{v}{y} p\cdot \beta - |\beta|^2},
\]
where $\frac{v}{y} p\cdot \beta - |\beta|^2\neq 0$ by (\ref{cond-21}).
Let $k=x/u,\;l=y/v.$ Then
\begin{equation}\label{def-other1}
l=\frac{1}{1-p\cdot \beta},\quad
k =\frac{(1-p\cdot\beta)|p|^2-p\cdot \beta}{(1-p\cdot\beta)p\cdot \beta - |\beta|^2}.
\end{equation}
Moreover,
\[
k-l=\frac{x}{u}-\frac{y}{v}=\frac{|p|^2-l p\cdot \beta}{p\cdot \beta -l|\beta|^2}-l
=\frac{|p-l\beta|^2}{p\cdot \beta -l|\beta|^2}.
\]
From $|q|=1$, we have $xv-yu=(k-l)uv=-|vp-lv\beta|$ and hence
\begin{equation}\label{def-u}
u=-\mbox{sgn}(v) \frac{|p-l\beta|}{k-l}=-\mbox{sgn}(v) \frac{p\cdot \beta -l |\beta|^2}{|p-l\beta|},
\end{equation}
where $\mbox{sgn}(v)=v/|v|$ is the sign of $v\ne 0.$
We then obtain $x,\,v,\,y$ by
\begin{equation}\label{def-other2}
x=ku,\quad v=x-\frac{1}{u}=ku-\frac{1}{u},\quad y=lv=lku-\frac{l}{u}.
\end{equation}
In this way, we have solved  $x,\,y,\,u,\,v$ in terms of $p,\,\beta$, uniquely up to the sign change. We can check that both conditions in (\ref{cond-20}) are satisfied.

Let us consider inequality (\ref{cond-21}) for  these solutions.
Equation on $|\gamma|^2$ in (\ref{cond-20}) implies  $\frac{y}{v}(1-xu)>1$ and hence the inequality  (\ref{cond-21}) yields  $\frac{x}{u}(1-xu)>\frac{y}{v}(1-xu)>1.$ So $0<xu<1$ and thus $x/u>y/v>1,$ i.e.,  $k>l>1.$ Then we deduce  the inequality
\begin{equation}\label{sys4}
|\beta|^2 + (p\cdot \beta)^2 -p\cdot \beta <0.
\end{equation}

\subsection{Exact formula of $L(K_0)$} In fact,  inequality (\ref{sys4})  exactly  characterizes the set $L(K_0).$
We have the following result.

\begin{thm}\label{thm-rconv}
\begin{equation}\label{set-L}
L(K_0)=\left\{ \begin{pmatrix} p & c\\ B & \beta\end{pmatrix} \, \Big |\; \tr B=0, \; |\beta|^2 +(p\cdot\beta)^2 -p\cdot \beta <0\right \}.
\end{equation}
Moreover, given any $\xi\in L(K_0)$, there exist
a rank-one matrix \[
\eta=\begin{pmatrix}
                            q & b \\
                            \frac{1}{b}\gamma\otimes q & \gamma \\
                         \end{pmatrix}
\]
with $|q|=1,\;  \gamma\cdot q=0,\;  b\neq0$ and two numbers $t_-<0<t_+$ such that
\[
\xi+t_\pm\eta\in K_0,
\]
where $|b|>0$ can be arbitrarily small.
\end{thm}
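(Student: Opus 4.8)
The plan is to prove the two inclusions defining $L(K_0)$ separately. The inclusion $L(K_0)\subset\{\,\cdots\,\}$ has essentially been obtained in the preceding subsection: any $\xi\in L(K_0)$ is either of the type treated in \emph{Case 1} or of the type in \emph{Case 2}, and in both cases the computations ending in the inequalities displayed just before (\ref{sys4}) yield $|\beta|^2+(p\cdot\beta)^2-p\cdot\beta<0$. So the first part of the proof is just to assemble those two cases into one statement. The real content is the reverse inclusion together with the \emph{moreover} claim about the smallness of $|b|$\,: given $\xi=\begin{pmatrix} p & c\\ B & \beta\end{pmatrix}$ with $\tr B=0$ and $|\beta|^2+(p\cdot\beta)^2-p\cdot\beta<0$, I must produce a rank-one $\eta$ of the stated block form, with $|b|$ as small as desired, and $t_-<0<t_+$ with $\xi+t_\pm\eta\in K_0$.

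The strategy for the reverse inclusion is to \emph{run the Case 2 analysis backwards}. Starting from $p,\beta$ satisfying (\ref{sys4}), I first observe that $p\cdot\beta<1$ (indeed $(p\cdot\beta)^2-p\cdot\beta<-|\beta|^2\le 0$ forces $0<p\cdot\beta<1$, unless $p\cdot\beta\le 0$ in which case it is even more clearly $<1$), so the quantity $l=\frac{1}{1-p\cdot\beta}$ in (\ref{def-other1}) is well defined and positive; similarly I must check that the denominator $(1-p\cdot\beta)\,p\cdot\beta-|\beta|^2$ appearing in $k$ is nonzero — and in fact negative — which is exactly (\ref{sys4}) rewritten, and that $p-l\beta\neq 0$ so that (\ref{def-u}) makes sense. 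Then I \emph{define} $k,l$ by (\ref{def-other1}), verify the chain $k>l>1$ (reversing the argument that led from (\ref{cond-21}) to (\ref{sys4})), pick either sign for $v\ne 0$, set $u,x,v,y$ by (\ref{def-u})--(\ref{def-other2}), recover $q,\gamma$ from (\ref{eq-q-g}), and define $\eta$ by the block formula with this $q,\gamma$ and with $b$ \emph{any} nonzero number. The point is that $b$ does not enter the system (\ref{sys1})--(\ref{sys2}) at all: the off-diagonal block $\frac1b\gamma\otimes q$ and the corner $b$ are chosen precisely so that multiplying $\eta$ by $t$ and adding to $\xi$ keeps the bottom-right entry equal to $\beta+t\gamma$ and the top-right entry equal to $c+tb$, neither of which is constrained by membership in $K_0$ (only $\tr(B+\frac{t}{b}\gamma\otimes q)=\tr B + \frac{t}{b}\gamma\cdot q = 0$ and $\sigma(p+tq)=\beta+t\gamma$ are required). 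Hence once $q,\gamma,u,v,x,y$ are fixed by the formulas, \emph{any} $b\ne 0$ works, and in particular $|b|$ can be taken arbitrarily small. Finally I verify directly that with $t_\pm$ the two roots of the common quadratic in (\ref{sys2}) (which has negative constant term, hence roots of opposite sign), the identity $\sigma(p+t_\pm q)=\beta+t_\pm\gamma$ holds, so $\xi+t_\pm\eta\in K_0$.

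A small separate point: one should double-check that the representation genuinely falls in \emph{Case 2} and not in the degenerate \emph{Case 1} — i.e. that $\gamma\ne 0$. This is automatic because $\gamma=0$ would force, via (\ref{eq-q-g}) and $x=\frac1u+v$, the contradiction $1+|\gamma|^2y^2<0$ noted in the text; alternatively, if $\beta$ and $p$ happen to be parallel one can still write $p=xq$, $\beta=uq$ and land in Case 1, where the same inequality (\ref{sys4}) is the condition — so the two cases overlap consistently and every $\xi$ in the right-hand set is captured. I will phrase the proof so that the Case 1 situation is subsumed: if $p-l\beta=0$ one is in the rank-one situation of Case 1 with $\eta=\begin{pmatrix} q & b\\ 0 & 0\end{pmatrix}$, and then $b$ is visibly free and small-able; otherwise one is in Case 2 as above.

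I expect the main obstacle to be purely bookkeeping: verifying that all the denominators in (\ref{def-other1})--(\ref{def-u}) are nonzero and have the right sign \emph{solely} from hypothesis (\ref{sys4}), and then confirming that the reconstructed $\eta$ really does satisfy $|q|=1$ and $\gamma\cdot q=0$ and that $\xi+t_\pm\eta\in K_0$ — i.e. that the ``backwards'' substitution is consistent. No genuinely new idea is needed beyond the forward computation already carried out; the proof is essentially the observation that every implication in Case 2 is reversible and that $b$ was a free parameter all along.
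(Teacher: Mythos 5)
Your proposal follows essentially the same route as the paper's proof: the forward inclusion is read off from the Case 1/Case 2 computations, and the reverse inclusion is obtained by running the Case 2 formulas (\ref{def-other1})--(\ref{eq-q-g}) backwards to reconstruct $q,\gamma,t_\pm$, with $b$ a free parameter so that $|b|$ can be taken arbitrarily small. One small correction: (\ref{sys4}) makes the denominator $(1-p\cdot\beta)\,p\cdot\beta-|\beta|^2$ \emph{positive} (not negative), which is exactly what gives $l,k>0$ and fixes the sign of $u$ in (\ref{def-u}); moreover, under the strict inequality the degenerate situation $p=l\beta$ cannot occur, so your Case 1 fallback is superfluous (the formulas subsume it, with $\gamma=0$ allowed).
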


\begin{proof}  Let $S$ be the set defined on the right-hand side of (\ref{set-L}).  The previous calculations show that $L(K_0)\subseteq S.$ To verify the reverse inclusion $S\subseteq L(K_0),$  let $\xi=\begin{pmatrix} p & c\\ B & \beta\end{pmatrix}\in S$.
Then $|\beta|^2+(p\cdot\beta)^2 -p\cdot \beta < 0.$
So $0<p\cdot\beta<1$ and $(1-p\cdot\beta)p\cdot\beta-|\beta|^2>0,$ and hence we can define $l,\;k$ by (\ref{def-other1}), so that $l>0$, $k>0$. To fix the sign, we define $u$ by (\ref{def-u}) with $+$ sign:
\[
u=\frac{p\cdot\beta-l|\beta|^2}{|p-l\beta|}=\frac{(1-p\cdot\beta)p\cdot\beta-|\beta|^2}{|(1-p\cdot\beta)p-\beta|}>0.
\]
We now define $x, v, y$ by (\ref{def-other2}). Then $x>0,$ $v<0$, and $y<0$.
After deducing that $xv-yu<0$, we finally define $q,\gamma$ by (\ref{eq-q-g}).
It is then straightforward to check the following:
\[
p=xq+y\gamma,\quad \beta=uq+v\gamma,\quad |q|=1,\quad \gamma\cdot q=0,
\]
\[
\quad x=\frac{1}{u}+v,\quad x^2+|\gamma|^2y^2+1-\frac{x}{u}=xv-yu<0.
\]
In particular, equation $\sigma(p+tq)=\beta+t\gamma$ has two solutions $t=t_\pm$ with $t_-<0<t_+$.
Now  let $
\eta=\begin{pmatrix} q &  b\\\frac{1}{b}\gamma\otimes q & \gamma\end{pmatrix}$, where $b\ne 0$ is arbitrary. Then $\xi+t_\pm \eta\in K_0,$ and so $\xi\in L(K_0)$.
The proof is now complete.
\end{proof}

\begin{remk} The quantities  $q\in\mathbf{S}^{n-1}$, $\gamma\in\R^n$  and $t_\pm$ defined in the proof  depend  \emph{continuously}  on $(p,\beta)\in\R^{n+n}$ with $|\beta|^2 +(p\cdot\beta)^2 -p\cdot \beta <0.$  We may also take $b>0$ (or $b<0$) to be a continuous function of all such $(p,\beta)$.
\end{remk}

\subsection{The approximating sets $\mathcal S_\delta$ and $S_\delta$}  Given any  $0\leq\delta\leq 1/2$, let
\begin{equation}\label{S-delta}
 \mathcal{S}_\delta=\{(p,\beta)\in\R^{n+n}\;|\;\delta |(1-p\cdot\beta)p-\beta|+|\beta|^2 +(p\cdot\beta)^2 -p\cdot \beta < 0\},
\end{equation}
\[
S_\delta=\left\{ \begin{pmatrix} p & c\\ B & \beta\end{pmatrix} \, \Big |\; \tr B=0, \; (p,\beta)\in\mathcal S_\delta
\right\}.
\]
Then $S_0=L(K_0)$. Immediate properties of the open
sets $\mathcal{S}_\delta$ are that
\[
\mathcal{S}_{\delta_2}\subset \mathcal{S}_{\delta_1}\quad\textrm{for $0\leq\delta_1<\delta_2\leq 1/2$},\quad \mathcal{S}_{1/2}=\emptyset,
\]
\[
\mathcal{S}_\delta\neq\emptyset\quad\textrm{for $0\leq\delta< 1/2$},\quad \mathcal{S}_0=\bigcup_{0<\delta<1/2}\mathcal{S}_\delta.
\]
In what follows we always assume $0<\delta<1/2$ unless otherwise stated.

 \begin{pro}\label{pro:rk-1-line} Let $\xi\in S_{\delta}$ and   $\xi_\pm \in K_0$ with $\rank(\xi_+-\xi_-)=1$ satisfy  that $\xi$ lies in the open line segment $(\xi_-,\xi_+)$. Then $(\xi_-,\xi_+)\subset S_\delta$.
\end{pro}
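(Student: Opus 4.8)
The statement claims that the relative interior condition $(p,\beta)\in\mathcal{S}_\delta$ is preserved under rank-one lamination: if $\xi$ sits strictly between $\xi_\pm\in K_0$ along a rank-one direction, then the entire open segment $(\xi_-,\xi_+)$ stays inside $S_\delta$. The plan is to parametrize the segment, compute how the relevant scalar quantities vary, and exploit the fact that $\xi_\pm$ lie in $K_0$ (so their bottom-right block equals $\sigma$ of their top-left row) to pin down the rank-one direction.

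First I would set $\xi_\pm=\begin{pmatrix}p_\pm & c_\pm\\ B_\pm & \sigma(p_\pm)\end{pmatrix}$ with $\tr B_\pm=0$, and write $\xi=\lambda\xi_-+(1-\lambda)\xi_+$ for some $\lambda\in(0,1)$; so $p=\lambda p_-+(1-\lambda)p_+$, $\beta=\lambda\sigma(p_-)+(1-\lambda)\sigma(p_+)$, and $B=\lambda B_-+(1-\lambda)B_+$ automatically has $\tr B=0$. The rank-one condition $\rank(\xi_+-\xi_-)=1$ forces, by exactly the analysis carried out in Section 4.1 (equations $(\ref{rk-1})$ and the ensuing Case~1/Case~2 discussion), a specific structure: after rescaling, $\xi_+-\xi_-$ is a multiple of $\eta=\begin{pmatrix}q & b\\ *&*\end{pmatrix}$ with $|q|=1$, and crucially $p_\pm = p + t_\pm aq$ and $\sigma(p_\pm)=\sigma(p+t_\pm aq)$ where $t_-<0<t_+$ encode the endpoints. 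The key reduction is that everything is governed by the one-variable function $t\mapsto \sigma(p+taq)$ restricted to the interval $[t_-,t_+]$ (or the plane spanned by $q$ and a fixed $\gamma$ in Case~2), and a general point $\xi_\tau$ on the segment $(\xi_-,\xi_+)$ corresponds to $t=\tau$ ranging over $(t_-,t_+)$ with its top-left row $p_\tau = p+\tau aq$ and bottom-right block $\beta_\tau$ the \emph{chord} value, not $\sigma(p_\tau)$.

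The heart of the matter is then a convexity/sign computation: I must show that the scalar function
\[
g(\tau) := \delta\,\bigl|(1-p_\tau\cdot\beta_\tau)p_\tau-\beta_\tau\bigr| + |\beta_\tau|^2 + (p_\tau\cdot\beta_\tau)^2 - p_\tau\cdot\beta_\tau
\]
is negative for all $\tau\in(t_-,t_+)$, given that it is negative at $\tau$ corresponding to $\xi$ (the hypothesis $\xi\in S_\delta$) and given the boundary behavior at $\tau=t_\pm$ where $\beta_{t_\pm}=\sigma(p_{t_\pm})$, hence $|\beta|^2+(p\cdot\beta)^2-p\cdot\beta = 0$ there and the first term vanishes too (since $(1-p\cdot\sigma(p))p-\sigma(p)$ is parallel to $\ldots$ — actually one checks it need not vanish, but $g(t_\pm)=\delta|\cdots|\ge 0$). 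So at the endpoints $g\ge 0$ and in the interior $g<0$ somewhere; to conclude $g<0$ on all of $(t_-,t_+)$ I would show $g$ is \emph{convex} on this interval, or more precisely that the non-$\delta$ part $h(\tau):=|\beta_\tau|^2+(p_\tau\cdot\beta_\tau)^2-p_\tau\cdot\beta_\tau$ is a quadratic (or convex) function of $\tau$ along the chord and that adding $\delta$ times a convex norm term keeps it convex; a convex function that is $\ge 0$ at both endpoints and $<0$ at an interior point is impossible unless... — so instead the correct structure must be that $h$ restricted to the chord is \emph{concave} down to a unique minimum, making the sublevel set $\{g<0\}$ an interval, and since it contains the point $\xi$ and has the two endpoints on its boundary, it must be exactly the open interval $(t_-,t_+)$.

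The main obstacle I anticipate is verifying the precise convexity/concavity behavior of $g$ along the rank-one segment, particularly handling the non-smooth term $\delta|(1-p\cdot\beta)p-\beta|$. I would circumvent this by working instead with the \emph{defining data} from the proof of Theorem~$\ref{thm-rconv}$: parametrize the segment not by the ambient $(p,\beta)$ but by recalling that membership in $\mathcal{S}_\delta$ is equivalent (via the modified Perona-Malik picture, cf.\ Lemma~$\ref{lem-modi}$ and the role of $\delta$) to the chord of $\sigma$ between $p_-$ and $p_+$ lying uniformly below the graph by the margin $\delta$; then a point $\xi_\tau$ on the segment has $\beta_\tau$ on the chord, and $(p_\tau,\beta_\tau)\in\mathcal{S}_\delta$ becomes the statement that the sub-chord from $\tau$ to either endpoint still lies below $\sigma$ with margin $\delta$, which follows from the original chord condition by a straightforward secant-slope monotonicity argument on the one-dimensional function. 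In short: reduce to one dimension using the rank-one structure forced by $K_0$, then the result is an elementary statement about sub-chords of chords, with the only real work being to match the algebraic inequality $(\ref{S-delta})$ with this geometric picture.
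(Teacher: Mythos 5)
There is a genuine gap: you never establish the sign-propagation step, and both mechanisms you sketch for it are flawed. The paper's proof rests on one structural observation your proposal is missing: because the rank-one direction joining two points of $K_0$ has $\alpha\cdot q=0$, the bilinear quantity $p^\tau\cdot\beta^\tau$ is \emph{linear} in $\tau$ along the segment $\xi^\tau=\tau\xi_++(1-\tau)\xi_-$. Consequently $G(\xi^\tau)=|\beta^\tau|^2+(p^\tau\cdot\beta^\tau)^2-p^\tau\cdot\beta^\tau$ is a quadratic in $\tau$ and $(1-p^\tau\cdot\beta^\tau)p^\tau-\beta^\tau$ is a vector-valued quadratic; since both $F(\xi)=|(1-p\cdot\beta)p-\beta|$ and $G$ vanish identically on $K_0$ (note $\beta=\sigma(p)$ gives $(1-p\cdot\beta)p=\frac{p}{1+|p|^2}=\beta$, so your hedge ``one checks it need not vanish'' is wrong, and this vanishing is essential), both restrictions factor: $G(\xi^\tau)=C_1\tau(1-\tau)$ and $F(\xi^\tau)=C_2\tau(1-\tau)$ on $[0,1]$ with $C_2\ge 0$. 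Hence $\delta F+G$ restricted to the segment is exactly $(\delta C_2+C_1)\tau(1-\tau)$, and the single interior point $\xi\in S_\delta$ forces $\delta C_2+C_1<0$, giving the conclusion. Your convexity route cannot substitute for this: the norm term $F(\xi^\tau)$ is \emph{concave} on the segment, $G(\xi^\tau)$ is convex, and the convexity of the sum is precisely equivalent to the sign you are trying to determine, so the argument is circular; moreover, with only $g(t_\pm)\ge 0$ (as you hedge), convexity plus one negative interior value does not yield negativity on the whole open segment (consider $\tau^2-0.1$ on $[-1,1]$), and your alternative ``concave with sublevel set an interval'' is inconsistent with the actual convexity of the quadratic part.

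Your fallback reduction to a one-dimensional chord picture is also not a proof. The claimed equivalence between $(p,\beta)\in\mathcal S_\delta$ and ``the chord of $\sigma$ lies below the graph with margin $\delta$'' is asserted, not established: the term $\delta|(1-p\cdot\beta)p-\beta|$ is not a uniform vertical margin, and in the genuinely multidimensional case (Case 2 of the paper's rank-one analysis, $\gamma\ne 0$) the $\beta$-component along the segment is a chord of the vector-valued curve $t\mapsto\sigma(p+tq)$ in $\R^n$, for which ``lies below the graph'' and ``secant-slope monotonicity'' have no precise meaning without exactly the kind of algebraic computation you are trying to avoid. As written, the proposal identifies the correct function to control and the correct endpoint/interior-point data, but the decisive step — that this function has constant sign on the open segment — is left unproved, and the paper's factorization via linearity of $p^\tau\cdot\beta^\tau$ is the missing idea.
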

\begin{proof} Consider functions
\[
F(\xi)= |(1-p\cdot\beta)p-\beta|,\;\; G(\xi)=|\beta|^2 +(p\cdot\beta)^2 -p\cdot \beta, \quad \forall\;  \xi=\begin{pmatrix} p & c\\ B & \beta\end{pmatrix}.
\]
 Then both $F$ and $G$ vanish on set $K_0$. For the  given $\xi$ and $\xi_\pm$, let $f(\tau)=F(\xi^\tau)$ and $g(\tau)=G(\xi^\tau),$ where  $\xi^\tau=\tau\xi_++(1-\tau)\xi_-.$ The rank-one condition implies that the corresponding term $p^\tau\cdot \beta^\tau$ is linear in $\tau$; hence $g(\tau)$ is a quadratic polynomial of $\tau$ and  $f(\tau)$ is the length of a vector  quadratic in $\tau.$ Since both $f(\tau)$ and $g(\tau)$ vanish when $\tau=0$ and 1, we must have
$g(\tau)=C_1\tau(1-\tau)$ and $f(\tau)=C_2\tau(1-\tau)$ for some constants $C_1, C_2$. Since $\xi=\xi^\lambda\in S_\delta$ for some $0<\lambda<1$, we have
\[
\delta f(\lambda)+g(\lambda)=\lambda(1-\lambda)(\delta C_2 +C_1)<0.
\]
This  implies the constant $\delta C_2 +C_1$ is negative.  Hence $\delta f(\tau)+g(\tau)=\tau(1-\tau)(\delta C_2 +C_1)<0$ for all $\tau\in (0,1),$ which proves exactly  $\xi^\tau\in S_\delta$ for all $\tau\in (0,1).$
\end{proof}

\begin{remk} The {\em lamination convex hull} $K^{lc}$ of a set $K\subset\mathbf{M} ^{m\times n}$ is defined to be the smallest  set $S\subset\mathbf{M} ^{m\times n}$ containing $K$ with the property that if $\xi_\pm\in S$ with $\rank(\xi_+-\xi_-)=1$ then $(\xi_-,\xi_+)\subset S$ (see \cite{Da}). As  in the proof of  Proposition \ref{pro:rk-1-line}, one can see that $K_0^{lc}=K_0\cup L(K_0).$
\end{remk}

\begin{lem}\label{bound-lem}  Let $m_\pm(\delta)$ be defined by $(\ref{root-PM})$ above. It follows that
\[
\sup_{(p,\beta)\in\mathcal S_\delta}|p|=m_+(\delta),\quad \inf_{(p,\beta)\in\mathcal S_\delta}|p|=m_-(\delta).
\]
\end{lem}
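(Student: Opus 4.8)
The plan is to pin down the full set of attained values $V:=\{\,|p| : (p,\beta)\in\mathcal S_\delta\,\}$: I claim that, under the running assumption $0<\delta<1/2$, one has $V=(m_-(\delta),m_+(\delta))$, after which the two asserted identities are simply $\sup V=m_+(\delta)$ and $\inf V=m_-(\delta)$.

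First I would fix $r>0$ and ask whether, for some $p$ with $|p|=r$, there is a $\beta$ with $(p,\beta)\in\mathcal S_\delta$. Note at the outset that $p=0$ forces $|\beta|^2<0$, so $p\neq 0$ on $\mathcal S_\delta$. Since $\mathcal S_\delta$ is open and
\[
\Psi(p,\beta):=\delta\,|(1-p\cdot\beta)p-\beta|+|\beta|^2+(p\cdot\beta)^2-p\cdot\beta
\]
is continuous, the question is equivalent to whether $\inf_{\beta}\Psi(p,\beta)<0$. Writing $\beta=\beta_1\hat p+\beta'$ with $\beta'\perp p$, the inner product $p\cdot\beta=r\beta_1$ does not see $\beta'$, while $\beta'$ enters $\Psi$ only through the two terms $|\beta'|^2$ sitting inside the square root and inside $|\beta|^2$, both increasing in $|\beta'|$. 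Hence $\inf_\beta\Psi(p,\beta)=\inf_{t\in\R}\Psi(p,t\hat p)$, which reduces the whole problem to one real parameter.

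With $\beta=t\hat p$ and $s=rt$, a short computation gives $|(1-s)p-\beta|=|\,r-(1+r^2)t\,|$, so $\Psi(p,t\hat p)=\delta\,|r-(1+r^2)t|+(1+r^2)t^2-rt$. Substituting $\tau=(1+r^2)t$ and clearing the positive factor $1+r^2$, I reduce to the sign of
\[
g(\tau):=\delta(1+r^2)\,|r-\tau|+\tau(\tau-r).
\]
For $\tau\ge r$ one has $g(\tau)=(\tau-r)\bigl(\tau+\delta(1+r^2)\bigr)\ge 0$; for $\tau<r$ one has $g(\tau)=(r-\tau)\bigl(\delta(1+r^2)-\tau\bigr)$, which is negative exactly when $\delta(1+r^2)<\tau<r$. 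Therefore $\inf_\beta\Psi(p,\beta)<0$ iff the interval $\bigl(\delta(1+r^2),\,r\bigr)$ is nonempty, i.e. iff $\delta r^2-r+\delta<0$. Since $\delta(1+r^2)=r$ is exactly $\rho(r)=\delta$, whose roots are $m_\pm(\delta)$ by $(\ref{root-PM})$ and whose associated upward parabola is negative between those roots, this says precisely $m_-(\delta)<r<m_+(\delta)$. Thus $V=(m_-(\delta),m_+(\delta))$, and the lemma follows.

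I do not expect a real obstacle here: the one step needing attention is the reduction to $\beta$ parallel to $p$ — one must recognize that the orthogonal component of $\beta$ only appears inside manifestly monotone terms — after which the rest is the elementary one-variable sign analysis of $g(\tau)$. It is also worth checking $m_-(\delta)>0$, so that $(m_-(\delta),m_+(\delta))\subset(0,\infty)$, consistent with $p\neq 0$ on $\mathcal S_\delta$.
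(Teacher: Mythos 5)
Your proof is correct, and it takes a genuinely different route from the paper. The paper argues in two stages: it introduces the angles $\theta$ between $p$ and $\beta$ and $\theta'$ between $\beta$ and $(1-p\cdot\beta)p-\beta$, rewrites membership in $\mathcal S_\delta$ as $\delta<|\beta|\cos\theta'$ (using $|\beta|^2+(p\cdot\beta)^2-p\cdot\beta=-\beta\cdot\bigl((1-p\cdot\beta)p-\beta\bigr)$), invokes the geometric inequality $\theta\le\theta'$ to get two-sided bounds on $|p|$, and then proves attainability separately by taking $p$ parallel to a fixed $\beta$ with $\delta<|\beta|<1/2$ and letting $|\beta|\to\delta^+$. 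You instead fix $p$ with $|p|=r$ and minimize the defining function over $\beta$: the observation that the component of $\beta$ orthogonal to $p$ enters only through monotone terms reduces the problem to $\beta\parallel p$, and the one-variable sign analysis of $g(\tau)=\delta(1+r^2)|r-\tau|+\tau(\tau-r)$ identifies the set of attainable radii exactly as the open interval $\bigl(m_-(\delta),m_+(\delta)\bigr)$, from which the sup and inf are immediate; your checks of the algebra ($|(1-p\cdot\beta)p-\beta|=|r-(1+r^2)t|$ for $\beta=t\hat p$) and of the endpoint cases ($r=0$ excluded, $m_-(\delta)>0$) are all sound. Your computation buys a sharper statement (the exact projection of $\mathcal S_\delta$ onto $|p|$, with bounds and attainability in one stroke) and avoids both the angle comparison $\theta\le\theta'$ and the limiting construction; what it does not reproduce are the paper's intermediate estimates $\delta<|\beta|<1/2$, which the paper reuses in Corollary \ref{bound-coro} but which are not part of the lemma you were asked to prove.
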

\begin{proof}
Let $(p,\beta)\in\mathcal S_\delta$. Then   $0<p\cdot\beta<1$ and $\beta\cdot((1-p\cdot\beta)p-\beta)>0.$ We write
\[
p\cdot\beta=|p| |\beta|\cos\theta,\;\; \beta\cdot((1-p\cdot\beta)p-\beta)=|\beta||(1-p\cdot\beta)p-\beta|\cos\theta'
\]
for some $0\leq\theta, \; \theta'<\pi/2$. A simple geometry shows that $\theta\le \theta'.$ As $|\beta|^2 +(p\cdot\beta)^2 -p\cdot \beta < 0$, we have
\[
|\beta|^2+|p|^2|\beta|^2\cos^2\theta-|p||\beta|\cos\theta<0,
\]
and so,
\[
|\beta|<\frac{1}{2},\quad \frac{1-\sqrt{1-4|\beta|^2}}{2|\beta|\cos\theta}
<|p|<\frac{1+\sqrt{1-4|\beta|^2}}{2|\beta|\cos\theta}.
\]
Condition $(p,\beta)\in\mathcal S_\delta$ becomes  $\delta<|\beta|\cos\theta';$ so $|\beta|\ge|\beta|\cos\theta >\delta$ and
\[
\begin{split} \frac{1-\sqrt{1-4\delta^2}}{2\delta}<\frac{1-\sqrt{1-4|\beta|^2}}{2|\beta|}\leq\frac{1-\sqrt{1-4|\beta|^2}}{2|\beta|\cos\theta} \\
 <|p|<\frac{1+\sqrt{1-4|\beta|^2}}{2|\beta|\cos\theta}<\frac{1+\sqrt{1-4\delta^2}}{2\delta}. 
\end{split}
\]
This proves
\begin{equation}\label{eq-ll}
\frac{1-\sqrt{1-4\delta^2}}{2\delta}\leq \inf_{(p,\beta)\in\mathcal S_\delta}|p| \leq \sup_{(p,\beta)\in\mathcal S_\delta}|p| \leq \frac{1+\sqrt{1-4\delta^2}}{2\delta}.
\end{equation}
Next, fix any $\beta\in\R^n$ with $\delta<|\beta|<1/2$. Let $l, l'$ be any numbers satisfying
\begin{equation}\label{eq-l}
\frac{1-\sqrt{1-4|\beta|^2}}{2|\beta|}<l<l'<\frac{1+\sqrt{1-4|\beta|^2}}{2|\beta|},
\end{equation}
and $p=\frac{l}{|\beta|}\beta$, $p'=\frac{l'}{|\beta|}\beta$. Then   $(p,\beta)$, $(p',\beta)$ are both in $\mathcal S_\delta$ with $|p|=l,   |p'|=l'$; so
\[
\inf_{(p,\beta)\in\mathcal S_\delta}|p| \le l<l'\le \sup_{(p,\beta)\in\mathcal S_\delta}|p|.
\]
As $l,l'$ are arbitrary and satisfy (\ref{eq-l}), we have
\[
\inf_{(p,\beta)\in\mathcal S_\delta}|p|  \leq\frac{1-\sqrt{1-4|\beta|^2}}{2|\beta|}<\frac{1+\sqrt{1-4|\beta|^2}}{2|\beta|}\leq \sup_{(p,\beta)\in\mathcal S_\delta}|p|.
\]
Finally, taking $|\beta|\to\delta^+$ and combining with (\ref{eq-ll}) complete  the proof.
\end{proof}

As an immediate consequence of the previous lemma, we have

\begin{coro}\label{bound-coro}  
$\mathcal{S}_\delta \subset  \{(p,\beta) \;|\; m_-(\delta)<|p|<m_+(\delta),\; \delta<|\beta|<1/2\}.$
\end{coro}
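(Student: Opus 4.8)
The plan is to recognize that Corollary~\ref{bound-coro} is not quite a formal consequence of the \emph{statement} of Lemma~\ref{bound-lem} (which only records $\sup|p|$ and $\inf|p|$), but rather of the first half of its \emph{proof}, where the relevant estimates were in fact obtained for each individual pair $(p,\beta)$. So I would simply revisit that argument and read off the pointwise strict bounds $m_-(\delta)<|p|<m_+(\delta)$ and $\delta<|\beta|<1/2$, valid for every $(p,\beta)\in\mathcal S_\delta$.

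Concretely, fix an arbitrary $(p,\beta)\in\mathcal S_\delta$. Discarding the nonnegative term $\delta|(1-p\cdot\beta)p-\beta|$ from the defining inequality of $\mathcal S_\delta$ gives $|\beta|^2+(p\cdot\beta)^2-p\cdot\beta<0$, which forces $0<p\cdot\beta<1$ and in particular $\beta\neq0$. Since $\beta\cdot\big((1-p\cdot\beta)p-\beta\big)=p\cdot\beta-(p\cdot\beta)^2-|\beta|^2>0$, the full defining inequality together with Cauchy--Schwarz gives $\delta\,|(1-p\cdot\beta)p-\beta|<\beta\cdot\big((1-p\cdot\beta)p-\beta\big)\le|\beta|\,|(1-p\cdot\beta)p-\beta|$, hence $\delta<|\beta|$. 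Writing $p\cdot\beta=|p||\beta|\cos\theta$ with $0\le\theta<\pi/2$, the inequality $|\beta|^2+(p\cdot\beta)^2-p\cdot\beta<0$ becomes $|\beta|\big(1+|p|^2\cos^2\theta\big)<|p|\cos\theta$, which yields at once $|\beta|<\rho(|p|\cos\theta)\le\tfrac12$; and, read as a quadratic in $|p|$ whose discriminant $1-4|\beta|^2$ is now positive, it also yields
\[
\frac{1-\sqrt{1-4|\beta|^2}}{2|\beta|\cos\theta}<|p|<\frac{1+\sqrt{1-4|\beta|^2}}{2|\beta|\cos\theta}.
\]

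Finally I would feed in the monotonicity on $(0,1/2)$ of the functions $m_\pm$ from $(\ref{root-PM})$ (increasing for $m_-$, decreasing for $m_+$, since $m_+m_-\equiv1$): using $\cos\theta\le1$ and $|\beta|>\delta$,
\[
|p|>\frac{1-\sqrt{1-4|\beta|^2}}{2|\beta|}=m_-(|\beta|)>m_-(\delta),
\]
and, using in addition the ``simple geometry'' inequality $\theta\le\theta'$ from the proof of Lemma~\ref{bound-lem} so that $|\beta|\cos\theta\ge|\beta|\cos\theta'>\delta$,
\[
|p|<\frac{1+\sqrt{1-4|\beta|^2}}{2|\beta|\cos\theta}\le\frac{1+\sqrt{1-4|\beta|^2\cos^2\theta}}{2|\beta|\cos\theta}=m_+(|\beta|\cos\theta)<m_+(\delta).
\]
Combining, $m_-(\delta)<|p|<m_+(\delta)$ and $\delta<|\beta|<1/2$, which is the asserted inclusion. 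There is essentially no real obstacle here, since everything is already contained in the proof of Lemma~\ref{bound-lem}; the only points requiring a little care are to note that these bounds hold for \emph{each} $(p,\beta)\in\mathcal S_\delta$ and not merely for the extreme values of $|p|$, and to keep the two angles $\theta$ (between $p$ and $\beta$) and $\theta'$ (between $\beta$ and $(1-p\cdot\beta)p-\beta$) distinct, upgrading $\delta<|\beta|$ to $\delta<|\beta|\cos\theta$ before pushing it through to the upper bound on $|p|$.
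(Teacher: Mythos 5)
Your proposal is correct and follows essentially the same route as the paper: the corollary is exactly the pointwise content of the estimates derived inside the proof of Lemma \ref{bound-lem} (the strict bounds $\delta<|\beta|<1/2$, the quadratic-in-$|p|$ inequality, and the use of $\theta\le\theta'$ to upgrade to $\delta<|\beta|\cos\theta$), which is what the paper means by "immediate consequence of the previous lemma." Your added remark that the lemma's statement alone (sup/inf) would only give non-strict bounds, so one must quote its proof, is a fair observation but does not change the argument.
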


\subsection{A useful convex integration  lemma}
The following result is important for convex integration with linear constraint.    For a more general case, see \cite[Lemma 2.1]{Po}.

\begin{lem}\label{approx-lem} Let $\lambda_1,\lambda_2>0$ and $\eta_1=-\lambda_1\eta, \; \eta_2=\lambda_2\eta$ with
\[
\eta=\begin{pmatrix} q &  b\\\frac{1}{b}\gamma\otimes q & \gamma\end{pmatrix},\quad |q|=1,\; \gamma\cdot q=0,\;   b\ne 0.
\]
Let $G\subset \mathbf R^{n+1}$ be a bounded domain. Then for each $\epsilon>0$, there exists a function $\omega=(\varphi,\psi)\in C_c^{\infty}(\R^{n+1};\mathbf R^{1+n})$ with $\mathrm{supp}(\omega)\subset\subset G$  that satisfies the following properties:
\begin{enumerate}
\item[(a)] \; $\dv \psi =0$  in $G$,

\item[(b)] \; $|\{z\in G\;|\; \nabla\omega(z)\notin \{\eta_1,\;\eta_2\}\}|<\epsilon,$

\item[(c)] \; $\dist(\nabla \omega(z),[\eta_1,\eta_2])<\epsilon$ for all $z\in G,$

\item[(d)] \; $\|\omega\|_{L^\infty(G)}<\epsilon,$

\item[(e)] \; $\int_{\R^n}\varphi(x,t)\,dx=0$ for each $t\in\R$.
\end{enumerate}

\end{lem}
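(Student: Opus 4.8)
The plan is to build $\omega=(\varphi,\psi)$ by a standard one-dimensional oscillation adapted to the specific rank-one direction $\eta$, taking care that the divergence constraint (a) and the mean-zero condition (e) are satisfied automatically by the structure of $\eta$. First I would record the elementary fact underlying the construction: since $\eta_1,\eta_2$ are rank-one matrices proportional to $\eta=\binom{q}{\frac1b\gamma\otimes q\ }\otimes(q,b)$ with the \emph{same} left vector $\binom{q}{(1/b)\gamma\otimes q}$ (up to sign of $\lambda_i$) and the same right vector $(q,b)$, a gradient taking only the two values $\eta_1,\eta_2$ arises from a function of the single scalar variable $\ell=(x,t)\cdot(q,b)=q\cdot x+bt$. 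Concretely, pick a one-periodic Lipschitz function $h\colon\R\to\R$ with $h'(s)\in\{-\lambda_1,\lambda_2\}$ a.e.\ (so $h'=-\lambda_1$ on a set of measure $\frac{\lambda_2}{\lambda_1+\lambda_2}$ and $h'=\lambda_2$ on the complementary set of measure $\frac{\lambda_1}{\lambda_1+\lambda_2}$, which forces $\int_0^1 h'=0$ so $h$ is genuinely periodic), and set $w^0(x,t)=h(q\cdot x+bt)\cdot\binom{1}{(1/b)\gamma}$. Then $\nabla w^0(x,t)=h'(q\cdot x+bt)\,\binom{1}{(1/b)\gamma}\otimes(q,b)\in\{\eta_1,\eta_2\}$ a.e., the $\psi$-block is $h(\ell)\frac1b\gamma$ whose divergence is $h'(\ell)\frac1b\gamma\cdot q=0$ because $\gamma\cdot q=0$ — this is exactly condition (a) for free — and $\|w^0\|_\infty$ is controlled by $\|h\|_\infty$, which we can make as small as we like by using a fast oscillation $h(N\cdot)/N$.

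Next I would localize and mollify. Fix a large integer $N$ and a cutoff $\chi\in C_c^\infty(G)$ with $0\le\chi\le1$ and $\chi\equiv1$ on a subdomain $G'$ with $|G\setminus G'|<\epsilon/2$; then mollify $w^0_N(z):=\frac1N h(N\ell)\binom{1}{(1/b)\gamma}$ at scale $1/N^2$ to obtain a smooth $\tilde w_N$, and define $\omega=\chi\,\tilde w_N$. For $N$ large, on $G'$ the mollification is negligible outside a layer of measure $O(1/N)$ around the jump set of $h'$, so $\nabla\omega=\nabla\tilde w_N\in\{\eta_1,\eta_2\}$ except on a set of measure $<\epsilon$, giving (b); everywhere the mollified gradient is a convex combination approaching the segment $[\eta_1,\eta_2]$, giving (c); on $G\setminus G'$ the gradient of $\chi\tilde w_N$ picks up a term $\tilde w_N\otimes\nabla\chi$ which is $O(1/N)$ in sup-norm, so it also lies within $\epsilon$ of $[\eta_1,\eta_2]$ once $N$ is large and also contributes at most $|G\setminus G'|<\epsilon$ to the bad set. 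The bound $\|\omega\|_\infty\le\|\tilde w_N\|_\infty=O(1/N)<\epsilon$ gives (d). The divergence-free property (a) needs a small check after cutoff: $\dv\psi=\dv(\chi\,h_N\,\tfrac1b\gamma)=\chi\,h_N'\tfrac1b\gamma\cdot q+h_N\,\tfrac1b\gamma\cdot D\chi=h_N\,\tfrac1b\gamma\cdot D\chi$, which is $O(1/N)$ but not exactly zero; I would fix this by instead writing the $\psi$-component directly as a curl-type expression — e.g. choose a vector potential for the uncut field and cut \emph{that} — or, more cheaply, absorb the small error using the right-inverse operator $\mathcal R$ of Theorem \ref{div-inv} applied to $-h_N\frac1b\gamma\cdot D\chi$ (which has zero mean over slices after a further adjustment) and add the resulting $O(1/N)$ correction to $\psi$; since Lemma \ref{approx-lem} is used downstream only through properties (a)–(e) with a fixed $\epsilon$, this correction is harmless for (b)–(e).

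For (e), note $\int_{\R^n}\varphi(x,t)\,dx=\int_{\R^n}\chi(x,t)\,\tfrac1N h(N(q\cdot x+bt))\,dx$; this is not automatically zero after cutting off, so I would enforce it by subtracting a corrector of the form $\varphi\mapsto\varphi-\mu(t)\theta(x)$ where $\theta\in C_c^\infty(\R^n)$ has $\int\theta=1$ and $\mu(t)$ equals the offending integral — again an $O(1/N)$ quantity, smooth in $t$, supported in $G$ — and this modifies only the $\varphi$-block, hence does not touch (a). All of (b),(c),(d) survive because the corrector is $O(1/N)$ in $C^1$.

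\textbf{Main obstacle.} The routine part is the one-dimensional oscillation; the genuinely delicate point is reconciling the three simultaneous constraints (a) $\dv\psi=0$, (e) $\int\varphi\,dx=0$, and compact support in $G$, \emph{after} the cutoff destroys the clean structure. The uncut building block satisfies (a) and (e) exactly by $\gamma\cdot q=0$ and periodicity of $h$, but multiplying by $\chi$ introduces lower-order errors in both; handling these errors cleanly — ideally by choosing a vector potential and cutting it off, so that $\psi=\mathrm{curl}(\chi\,A)$ is divergence-free by construction and only (e) needs the small scalar corrector — is the step I would spend the most care on. Everything else is a matter of choosing $N$ large enough in terms of $\epsilon$, $G$, $\lambda_1,\lambda_2$, $|b|$ and $|\gamma|$.
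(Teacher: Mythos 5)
Your building block and the overall plan (a one-dimensional sawtooth oscillation in the variable $q\cdot x+bt$, with $\gamma\cdot q=0$ making the $\psi$-part divergence free, followed by a cutoff) is exactly the right skeleton, and your localization gives (b), (c), (d) correctly: multiplying by $\chi\in[0,1]$ keeps the main term inside the segment $[\eta_1,\eta_2]$, and the commutator term $\tilde w_N\otimes\nabla\chi$ is $O(1/N)$. The gap is in how you restore (a) after the cutoff. Your ``cheap'' fix --- adding $\mathcal R\bigl(-h_N\tfrac1b\gamma\cdot D\chi\bigr)$ to $\psi$ --- does not work for this lemma. First, $\mathcal R$ (Theorem \ref{div-inv}) is nonlocal on a box and only vanishes on the box boundary, so the corrected $\omega$ is no longer supported compactly inside a general bounded $G$. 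More seriously, the error function $e=h_N\tfrac1b\gamma\cdot D\chi$ is small in $L^\infty$ but \emph{not} in $W^{1,\infty}$ (its spatial gradient is $O(1)$ because it oscillates at frequency $N$ with amplitude $1/N$), and $\mathcal R$ gives no smallness whatsoever for the spatial gradient of $\mathcal R e$ --- the paper itself stresses this lack of control on $D(\mathcal R\,\cdot)$ in Step 7 of the proof of Theorem \ref{thm-density-1}. Since (b) and (c) are conditions on the full gradient $\nabla\omega$, including the $D\psi$ block, an order-one perturbation of $D\psi$ destroys them; your remark that the correction is ``harmless for (b)--(e) because the lemma is only used through (a)--(e)'' is circular, as (b)--(c) are precisely the conclusions at stake. (The paper can afford $\mathcal R$-corrections later, in Theorem \ref{thm-density-1}, only because at that stage membership in $\mathcal S_\delta$ constrains $(Du,v_t)$ and not $Dv$.) The (e)-corrector $\mu(t)\theta(x)$ is also not obviously $O(1/N)$ in $C^1$ without an oscillation (Riemann--Lebesgue) argument for $\mu'(t)$, which you should spell out if you keep that route.

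The fix you label ``ideal'' --- cut off a potential rather than the field --- is exactly what the paper does, and carrying it out removes the need for any correction at all. The paper defines the linear map $\mathcal P(h)=\bigl(q\cdot Dh,\ \tfrac1b(\gamma\otimes q-q\otimes\gamma)Dh\bigr)$; for \emph{every} compactly supported scalar $h$ the image automatically satisfies $\dv\psi\equiv0$ (antisymmetry) and $\int_{\R^n}\varphi\,dx=0$ (it is an exact $x$-divergence), so (a) and (e) survive the cutoff exactly when one sets $\omega=\mathcal P\bigl(\rho_\epsilon\, f_\tau(q\cdot x+bt)\bigr)$. The price is that the cutoff errors are now $h\,\nabla\mathcal P(\rho_\epsilon)+\mathcal B(\nabla\rho_\epsilon,\nabla h)$, so one must choose the profile $f_\tau$ with \emph{both} $\|f_\tau\|_\infty$ and $\|f_\tau'\|_\infty$ small while $f_\tau''\in\{-\lambda_1,\lambda_2\}$ off a set of small measure; in your normalization this corresponds to taking one more antiderivative of your sawtooth ($f\sim N^{-2}H(N\ell)$), after which (b)--(d) follow as in your estimates. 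As written, your detailed argument rests on the flawed $\mathcal R$-repair, and the correct repair is only sketched, so the proof is incomplete at its one genuinely delicate point.
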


\begin{proof} 
1. The proof follows a simplified version of \cite[Lemma 2.1]{Po}. Define a map $\mathcal P\colon C^1(\mathbf R^{n+1})\to C^0(\mathbf R^{n+1};\mathbf R^{1+n})$ by setting $\mathcal P(h)=(u,v)$, where, for $h(x,t)\in C^1(\mathbf R^{n+1})$,
\[
u(x,t)= q\cdot Dh(x,t), \quad
 v(x,t)=\frac{1}{b}(\gamma\otimes q-q\otimes \gamma)Dh(x,t).
\]
We easily see  that $\mathcal P(h)=(u,v)\in C_c^\infty(\mathbf R^{n+1};\mathbf R^{1+n}),$ $\mathrm{supp}(\mathcal{P}(h))\subset\mathrm{supp}(h),$ $\dv v\equiv 0$,  and $\int_{\R^n}u(x,t)\,dx=0$ for all $t\in\R$,  for all $h\in C_c^\infty\mathbf (\R^{n+1}).$ For $h(x,t)=f(q\cdot x+bt)$ with $f\in C^\infty(\R)$, $w=(u,v)=\mathcal P(h)$ is given by $u(x,t)=f'(q\cdot x+bt)$ and $v(x,t)=f'(q\cdot x+bt)\frac{\gamma}{b}$, and hence $\nabla w(x,t)=f''(q\cdot x+bt)\eta.$
We also note that $\mathcal P(gh)=g\mathcal P(h)+h\mathcal P(g)$ and hence
\begin{equation}\label{bi-form}
\nabla \mathcal P(gh)= g\nabla \mathcal P(h) + h\nabla \mathcal P(g) + \mathcal B(\nabla g,\nabla h) \quad \forall\; g,\;h\in C^\infty(\R^{n+1}),
\end{equation}
where $\mathcal B(\nabla g,\nabla h)$ is a bilinear map of $\nabla g$ and $\nabla h$; so $|\mathcal B(\nabla h,\nabla g)|\le C|\nabla h||\nabla g|$ for some constant $C>0.$

2. Let $G_\epsilon\subset\subset G$ be a smooth sub-domain such that $|G\setminus G_\epsilon|<\epsilon/2,$ and let $\rho_\epsilon\in C^\infty_c(G)$ be a cut-off function satisfying $0\le \rho_\epsilon\le 1$ in $G$, $\rho_\epsilon =1$ on $G_\epsilon$.
As $G$ is bounded, $G \subset\{(x,t)\;|\; k<q\cdot x+bt<l\}$ for some numbers $k<l.$ For each $\tau>0$, we can find a function $f_\tau\in C^\infty_c(k,l)$ satisfying
\[
-\lambda_1\le f_\tau'' \le \lambda_2, \;  |\{s\in (k,l)\;|\;f_\tau''(s)\notin\{-\lambda_1,\;  \lambda_2\}\}|<\tau, \; \|f_\tau\|_{L^\infty}+ \|f_\tau'\|_{L^\infty} <\tau.
\]

3. Define $\omega=(\varphi,\psi)=\mathcal P(\rho_\epsilon(x,t) h_\tau(x,t)),$ where $h_\tau(x,t)= f_\tau(q\cdot x+ bt).$ Then  $
\|h_\tau\|_{C^1}\le C\|f_\tau\|_{C^1}\le C\tau$,  $\omega\in C_c^{\infty}(\R^{n+1};\mathbf R^{1+n})$,  $\mathrm{supp}(\omega)\subset\mathrm{supp}(\rho_\epsilon)\subset\subset G$, and (a) and (e) are satisfied. Note that
\[
|\omega|\le |\rho_\epsilon||\mathcal P(h_\tau)|+|h_\tau||\mathcal P(\rho_\epsilon)|\le C_\epsilon \tau,
\]
where $C_\epsilon>0$ is a constant depending on $\|\rho_\epsilon\|_{C^1(G)}$. So we can choose a $\tau_1>0$ so small that (d) is satisfied for all $0<\tau<\tau_1.$ Note also that
\[
\{z\in G\,|\; \nabla\omega(z)\notin \{\eta_1,\eta_2\}\}\subseteq (G\setminus G_\epsilon)\cup  \{z\in G_\epsilon \,|\; f''_\tau(q\cdot x+bt)\notin \{-\lambda_1,\; \lambda_2\}\}.
\]
Since $| \{z\in G_\epsilon \,|\; f''_\tau(q\cdot x+bt)\notin \{-\lambda_1,\; \lambda_2\}| \le N |\{s\in (k,l)\;|\;f_\tau''(s)\notin\{-\lambda_1,\;  \lambda_2\}\}|$ for some constant $N>0$ depending only on set $G$, there exists a $\tau_2>0$ such that
\[
|\{z\in G\,|\; \nabla\omega(z)\notin \{\eta_1,\eta_2\}\}|\le \frac{\epsilon}{2}+ N \tau<\epsilon
\]
for all $0<\tau<\tau_2$. Therefore, (b) is satisfied. Finally, note that
\[
\rho_\epsilon \nabla \mathcal P(h_\tau(x,t))=\rho_\epsilon f''_\tau(q\cdot x+bt) \eta\in [\eta_1,\eta_2] \quad \mbox{in $G$}
\]
and, by (\ref{bi-form}), for all $z=(x,t)\in G$,
\[
|\nabla \omega(z) -\rho_\epsilon \nabla \mathcal P(h_\tau(x,t))|\le |h_\tau| |\nabla \mathcal P(\rho_\epsilon)|+|\mathcal B(\nabla h_\tau,\nabla \rho_\epsilon)|\le C_\epsilon' \tau<\epsilon
\]
for all $0<\tau<\tau_3$, where $C'_\epsilon>0$ is a constant depending on  $\|\rho_\epsilon\|_{C^2(G)}$, and $\tau_3>0$ is another constant. Hence (c) is satisfied. Taking $0<\tau<\min\{\tau_1,\tau_2,\tau_3\}$, the proof is complete.
\end{proof}

\subsection{Relaxation of   $\nabla \omega(z)\in K_0$}
We now study  the relaxation of  homogeneous   inclusion$\nabla \omega(z)\in K_0.$ We prove the following result  in a form  suitable for   later use.

\begin{thm}\label{main-lemma}   Let $\mathcal K$ be a compact subset of $\mathcal S_\delta.$ Let $\tilde Q\times \tilde I$ be a  box  in $\mathbf R^{n+1}.$  Then, given any $\epsilon>0$, there exists a $\rho_0>0$ such that for each box $Q\times I\subset \tilde Q\times \tilde I$, point  $(p,\beta)\in\mathcal K$,  and  number $\rho>0$ sufficiently small, there exists a function $\omega=(\varphi,\psi)\in C^\infty_{c} (Q\times I;\mathbf R^{1+n})$  satisfying the following properties:
\begin{enumerate}
\item[(a)] \; $\dv \psi =0$  in $Q\times I$,

\item[(b)] \; $(p'+D\varphi(z), \beta'+ \psi_t(z))\in \mathcal{S}_{\delta}$ for all $z \in Q\times I$ and $|(p',\beta')-(p,\beta)|\leq \rho_0,$

\item[(c)] \; $\|\omega\|_{L^\infty(Q\times I)}<\rho,$

\item[(d)] \; $\int_{Q\times I} |\beta+\psi_t(z)-\sigma(p+D\varphi(z))|dz<\epsilon {|Q\times I|}/{|\tilde Q\times \tilde I|},$

\item[(e)] \; $\int_{Q} \varphi(x,t)dx=0$  for all $t\in I,$

\item[(f)] \; $\|\varphi_t\|_{L^\infty(Q\times I)}<\rho.$
\end{enumerate}
\end{thm}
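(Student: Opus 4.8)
\emph{Strategy.} The plan is to deduce Theorem \ref{main-lemma} from the convex integration building block Lemma \ref{approx-lem}, feeding it the rank-one data produced by Theorem \ref{thm-rconv} together with the line property of Proposition \ref{pro:rk-1-line}. For $(p,\beta)\in\mathcal K\subset\mathcal S_\delta\subset\mathcal S_0$ set
\[
\xi(p,\beta)=\begin{pmatrix} p & 0\\ 0 & \beta\end{pmatrix}\in S_\delta=\,\{\tr B=0\}\cap\{(p,\beta)\in\mathcal S_\delta\}\subset S_0=L(K_0).
\]
By Theorem \ref{thm-rconv} and the Remark following it there are, \emph{continuously} in $(p,\beta)\in\mathcal K$, a unit vector $q=q(p,\beta)$, a vector $\gamma=\gamma(p,\beta)$ with $\gamma\cdot q=0$, and numbers $t_-(p,\beta)<0<t_+(p,\beta)$ such that $\xi+t_\pm\eta\in K_0$, where $\eta=\eta_b:=\begin{pmatrix} q & b\\ \tfrac1b\gamma\otimes q & \gamma\end{pmatrix}$ for \emph{any} $b\neq0$. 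Because $\eta_b$ is rank one and the trace of its lower block is $\tfrac1b\,\gamma\cdot q=0$, the inclusion $\xi+s\eta_b\in S_\delta$ is equivalent to $(p+sq,\beta+s\gamma)\in\mathcal S_\delta$, independently of $b$; and $(p+sq,\beta+s\gamma)$ is exactly the kind of pair appearing in conclusions (b) and (d), since a gradient $\nabla\omega=s\eta_b$ has first row $sq$ and last-row vector $s\gamma$. As $\xi\in S_\delta$ lies on the open rank-one segment joining $\xi+t_-\eta_b$ and $\xi+t_+\eta_b$ (both in $K_0$), Proposition \ref{pro:rk-1-line} gives $(p+sq,\beta+s\gamma)\in\mathcal S_\delta$ for all $s\in(t_-,t_+)$.

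\emph{The sub-segment and the choice of $\rho_0$.} Fix a constant $\mu\in(0,1)$ (pinned down below, depending only on $\epsilon,\mathcal K,\tilde Q\times\tilde I$) and put $t_\pm'(p,\beta)=(1-\mu)t_\pm(p,\beta)$, so $t_-'\in(t_-,0)$ and $t_+'\in(0,t_+)$. The set
\[
\Sigma=\bigl\{(p+sq(p,\beta),\,\beta+s\gamma(p,\beta))\ :\ (p,\beta)\in\mathcal K,\ t_-'(p,\beta)\le s\le t_+'(p,\beta)\bigr\}
\]
is compact and, by the previous paragraph, contained in the open set $\mathcal S_\delta$; hence $d_0:=\dist(\Sigma,\R^{2n}\setminus\mathcal S_\delta)>0$. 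Set $\rho_0:=d_0/3$. Using $\beta+t_\pm\gamma=\sigma(p+t_\pm q)$, the global Lipschitz bound $L_\sigma$ of $\sigma$, and the uniform bounds on $|\gamma|$ and $|t_\pm|$ over the compact $\mathcal K$, one gets the \emph{endpoint defect estimate}
\[
\bigl|(\beta+t_\pm'\gamma)-\sigma(p+t_\pm'q)\bigr|\le C(\mathcal K)\,\mu\qquad\text{for all }(p,\beta)\in\mathcal K;
\]
now fix $\mu$ so small that $C(\mathcal K)\,\mu<\epsilon/(2|\tilde Q\times\tilde I|)$. This determines $\mu$, hence $\Sigma$, $d_0$ and $\rho_0$, all depending only on $\epsilon,\mathcal K,\tilde Q\times\tilde I$ (and the fixed $\delta$).

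\emph{Construction for a given box, point and $\rho$.} Given a box $Q\times I\subset\tilde Q\times\tilde I$, a point $(p,\beta)\in\mathcal K$ and $\rho>0$, first choose any $b\neq0$ with $|b|$ small enough that $|b|\cdot\sup_{\mathcal K}\max(|t_-'|,|t_+'|)<\rho/2$. Apply Lemma \ref{approx-lem} on $G=Q\times I$ with this $\eta_b$, with $\lambda_1=-t_-'>0$, $\lambda_2=t_+'>0$, and with parameter $\epsilon_1:=\min\{\rho,\ d_0/(3\sqrt2),\ \epsilon\,|Q\times I|/(2C_{\max}|\tilde Q\times\tilde I|)\}$, where $C_{\max}=C_{\max}(\mathcal K,\delta)$ is a bound for $|\beta+\psi_t|+|\sigma(p+D\varphi)|$ whenever $\dist(\nabla\omega,[t_-'\eta_b,t_+'\eta_b])\le1$ (finite, e.g.\ by Corollary \ref{bound-coro} and $|\sigma|\le\tfrac12$). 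The resulting $\omega=(\varphi,\psi)\in C^\infty_c(Q\times I;\R^{1+n})$ satisfies $\dv\psi=0$ and $\int_{\R^n}\varphi(\cdot,t)\,dx=0$, giving (a) and (e); $\|\omega\|_{L^\infty}<\epsilon_1\le\rho$, giving (c); moreover $\nabla\omega(z)\in\{t_-'\eta_b,t_+'\eta_b\}$ off a set of measure $<\epsilon_1$, and $\dist(\nabla\omega(z),[t_-'\eta_b,t_+'\eta_b])<\epsilon_1$ for all $z$. For (f), inspect the proof of Lemma \ref{approx-lem}: there $\varphi=\rho_G(x,t)\,f_\tau'(q\cdot x+bt)+(\text{terms }O(\|f_\tau\|_{C^1}))$, so $\varphi_t=\rho_G\,b\,f_\tau''(q\cdot x+bt)+(\text{terms }O(\|f_\tau\|_{C^1}))$ with $|f_\tau''|\le\max(\lambda_1,\lambda_2)$; hence $\|\varphi_t\|_{L^\infty}\le|b|\max(\lambda_1,\lambda_2)+C_{Q\times I}\,\tau<\rho$ once $\tau$ is chosen small (legitimate, since $\tau$ is free in Lemma \ref{approx-lem}).

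\emph{Verification of (b) and (d).} Write $\nabla\omega(z)=s(z)\eta_b+e(z)$ with $s(z)\in[t_-',t_+']$ and $|e(z)|<\epsilon_1$; then $D\varphi(z)=s(z)q+e_{11}(z)$ and $\psi_t(z)=s(z)\gamma+e_{22}(z)$ with $|e_{11}(z)|,|e_{22}(z)|<\epsilon_1$. For $|(p',\beta')-(p,\beta)|\le\rho_0$,
\[
(p'+D\varphi(z),\,\beta'+\psi_t(z))=(p+s(z)q,\,\beta+s(z)\gamma)+(p'-p,\beta'-\beta)+(e_{11},e_{22})(z),
\]
a perturbation of norm $\le\rho_0+\sqrt2\,\epsilon_1\le d_0/3+d_0/3<d_0$ of the point $(p+s(z)q,\beta+s(z)\gamma)\in\Sigma$; hence it lies in $\mathcal S_\delta$, which is (b). For (d): on the good set $\nabla\omega(z)\in\{t_-'\eta_b,t_+'\eta_b\}$ exactly, so the defect is $|(\beta+t_\pm'\gamma)-\sigma(p+t_\pm'q)|<\epsilon/(2|\tilde Q\times\tilde I|)$ by the endpoint estimate, while on the bad set (measure $<\epsilon_1$) it is at most $C_{\max}$; integrating,
\[
\int_{Q\times I}|\beta+\psi_t-\sigma(p+D\varphi)|\,dz<\frac{\epsilon}{2|\tilde Q\times\tilde I|}|Q\times I|+C_{\max}\,\epsilon_1\le\frac{\epsilon\,|Q\times I|}{|\tilde Q\times\tilde I|}.
\]

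\emph{Where the difficulty lies.} The crux is the second step: the rank-one sub-segment must be a compact subset of the \emph{open} set $\mathcal S_\delta$ (so that (b) survives both the $\rho_0$-perturbation and the $\epsilon_1$-oscillation error) yet have endpoints close enough to $K_0$ to keep (d) small, and both requirements must hold \emph{uniformly over $\mathcal K$} and \emph{independently of the box $Q\times I$ and of $\rho$} — this is what forces the stated order of quantifiers. Everything else is bookkeeping on top of Lemma \ref{approx-lem}: its special gradient directions $\eta_b$ deliver (a) and (e) for free, and taking $|b|$ tiny is harmless for (b)--(e) (Lemma \ref{approx-lem} controls $\omega$ only in $L^\infty$, never $\nabla\omega$) while being exactly what makes $\varphi_t$ small in (f).
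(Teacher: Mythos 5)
Your proposal is correct and follows essentially the same route as the paper: continuous rank-one data from Theorem \ref{thm-rconv} and its remark, a shrunken rank-one segment kept uniformly inside the open set $\mathcal S_\delta$ via Proposition \ref{pro:rk-1-line} and compactness (which yields the box- and $\rho$-independent $\rho_0$), and then Lemma \ref{approx-lem} applied with $|b|$ small to produce $\omega$ and verify (a)--(f), with (d) split into the endpoint defect plus the small bad set. The only cosmetic differences are that you work directly in $(p,\beta)$-space rather than with the relative distance in the trace-free matrix space $\Sigma_0$, and you obtain (f) by inspecting the proof of Lemma \ref{approx-lem}, whereas the paper deduces the same bound from conclusion (c) of that lemma via $|\varphi_t|\le (t_+-t_-)|b|+\rho$.
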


\begin{proof} Let $\xi=\xi(p,\beta)=\begin{pmatrix} p & 0\\O & \beta\end{pmatrix}$ for $(p,\beta)\in\mathcal K\subset\mathcal S_\delta$, where $O$ is  the $n\times  n$ zero matrix. We omit the dependence on $(p,\beta)$ in the following whenever it is clear from the context.  Since $\xi\in S_{\delta}\subset L(K_0)$ on $\mathcal K$, it follows from Theorem \ref{thm-rconv} and its remark that given any $\rho>0$, there exist
\emph{continuous} functions $q:\mathcal K\to\mathbf{S}^{n-1},$ $\gamma:\mathcal K\to\R^n$,
$t_\pm:\mathcal K\to \R$, and $b:\mathcal K\to (0,\infty)$ with $\gamma\cdot q=0$, $t_-<0<t_+$ on $\mathcal K$ such that letting  $\eta =\begin{pmatrix} q & b \\ \frac{1}{b}\gamma\otimes q & \gamma\end{pmatrix}  $ on $\mathcal K$, we have
\[
\xi+t_\pm\eta\in K_0,\quad 0<b<\frac{\rho}{t_+-t_-}\quad\textrm{on $\mathcal K$}.
\]
Writing $\xi_\pm=\begin{pmatrix} p_\pm & c_\pm\\B_\pm & \beta_\pm\end{pmatrix}=\xi+t_\pm\eta$ on $\mathcal K$, we have $\xi=\lambda \xi_+ + (1-\lambda)\xi_-,\;\lambda=\frac{-t_-}{t_+-t_-}\in(0,1)$ on $\mathcal K$.

Proposition \ref{pro:rk-1-line} implies that  on $\mathcal K$,  both $\xi_-^\tau=\tau \xi_+ + (1-\tau)\xi_-$ and $\xi_+^\tau=(1-\tau) \xi_+ +  \tau \xi_-$ belong to $S_{\delta}$ for each $\tau\in (0,1)$.  Let $0<\tau<\min_{\mathcal K}\min\{\lambda,1-\lambda\}\le \frac12$ be a small number to be selected later.   Let $\lambda'=\frac{\lambda-\tau}{1-2\tau}$ on $\mathcal K$. Then $\lambda'\in (0,1)$,
$\xi=\lambda'\xi_+^\tau+(1-\lambda')\xi_-^\tau$ on $\mathcal K$.
Moreover, on $\mathcal K$, $\xi_+^\tau-\xi_-^\tau=(1-2\tau)(\xi_+ -\xi_- )$ is rank-one, $[\xi_-^\tau,\xi_+^\tau]\subset(\xi_-,\xi_+)\subset S_{\delta}$, and
\[
c\tau\leq|\xi_+^\tau-\xi_+|=|\xi_-^\tau-\xi_-|=\tau |\xi_+ -\xi_-|=\tau(t_+-t_-)|\eta|\leq C\tau,
\]
where $C=\max_{\mathcal K}(t_+-t_-)|\eta|\geq\min_{\mathcal K}(t_+-t_-)|\eta|=c>0.$ By continuity, $H_\tau=\cup_{(p,\beta)\in\mathcal{K}}[\xi_-^\tau(p,\beta),\xi_+^\tau(p,\beta)]$ is a compact subset of $S_\delta$, where $S_\delta$ is open in
\[
\Sigma_0=\left\{\begin{pmatrix} p & c\\B & \beta\end{pmatrix}\;\Big|\; \mathrm{tr}B=0\right\}.
\]
So $d_\tau=\mathrm{dist}(H_\tau,\partial|_{\Sigma_0}S_\delta)>0$, where $\partial|_{\Sigma_0}$ is the relative boundary in the space $\Sigma_0$.

Let $\eta_1=-\lambda_1\eta=-\lambda'(1-2\tau)(t_+-t_-)\eta,\,\eta_2= \lambda_2\eta=(1-\lambda')(1-2\tau)(t_+-t_-)\eta$ on $\mathcal K$, where $\lambda_1=\tau(-t_+)+(1-\tau)(-t_-)>0,\,\lambda_2=(1-\tau)t_++\tau t_->0$  on $\mathcal K$ with $\tau>0$ sufficiently small. Applying  Lemma \ref{approx-lem} to matrices $\eta_1,\,\eta_2$ and set $G= Q\times I$, we obtain that for each $\rho>0$, there exists
a function $\omega=(\varphi,\psi)\in C^\infty_c( Q\times I;\mathbf R^{1+n})$  and an open set $G_\rho\subset\subset  Q\times I$ satisfying the following conditions:
\begin{equation}\label{approx-1}
\begin{cases} \mbox{(1) \;  $\dv \psi =0$  in $Q\times I$,}\\
\mbox{(2) \; $|(Q\times I) \setminus G_\rho|<\rho$;\; $\xi+\nabla  \omega(z)\in \{\xi^\tau_-,\;\xi_+^\tau\}$ for all $z\in  G_\rho$,}\\
\mbox{(3) \; $\xi+\nabla  \omega(z)\in [\xi_-^\tau,\xi_+^\tau]_\rho$ for all $z\in Q\times I,$}\\
\mbox{(4) \; $\|\omega\|_{L^\infty(Q\times I)}<\rho$,} \\
\mbox{(5) \; $\int_Q \varphi(x,t)\,dx=0$ for all $t\in I$,} \\
\mbox{(6) \;  $\|\varphi_t\|_{L^\infty(Q\times I)}<2\rho,$}
\end{cases}
\end{equation}
where  $[\xi_-^\tau,\xi_+^\tau]_\rho$ denotes the $\rho$-neighborhood of closed line segment $[\xi_-^\tau,\xi_+^\tau].$
From (\ref{approx-1}.3), (\ref{approx-1}.6) follows as
\[
|\varphi_t|<|c_+-c_-|+\rho=(t_+-t_-)|b|+\rho<2\rho \quad\textrm{in $Q\times I$.}
\]
By (\ref{approx-1}.3), $|\beta+\psi_t(z)|\le C+\rho$  for $z\in Q\times I;$
hence
\begin{equation}
\begin{split}
&\int_{ Q\times I}   |\beta+\psi_t -\sigma(p+D\varphi)|dz \\
 &\le \int_{G_\rho}|\beta+\psi_t -\sigma(p+D\varphi)|dz + (C+\rho+\frac{1}{2})\rho
\\
&\le |Q\times I| \max\{|\beta_\pm^\tau -\sigma(p_\pm^\tau)|\} +(C+\rho+\frac{1}{2})\rho \\
& \le C|Q\times I|\tau+ |Q\times I| \max\{|\sigma(p_\pm) -\sigma(p_\pm^\tau)|\}+(C+\rho+\frac{1}{2})\rho,\label{approx-2}
\end{split}
\end{equation}
where  $\xi^\tau_\pm=\begin{pmatrix} p^\tau_\pm & c^\tau_\pm\\B^\tau_\pm & \beta^\tau_\pm\end{pmatrix}.$

Note (a), (c), (e), and (f) follow from (\ref{approx-1}), where $2\rho$ in  (\ref{approx-1}.6) can be adjusted to $\rho$ as in (f). By the uniform continuity of $\sigma$ on the set $J=\{p'\in\R^n\,|\,|p'|\le m_+(\delta)+1\}$, we can find a $\rho'>0$ such that $|\sigma(p')-\sigma(p'')|<\frac{\epsilon}{3|\tilde Q\times\tilde I|}$ whenever $p',\,p''\in J$ and $|p'-p''|<\rho'$, where $m_+(\delta)>0$ is the number defined in Lemma \ref{bound-lem}.  We then choose a $\tau>0$ so small that $C\tau<\rho'$ and $C|\tilde Q\times\tilde I|\tau<\frac{\epsilon}{3}$. Since $p_\pm,\,p_\pm^\tau\in J$ and $|p_\pm-p_\pm^\tau|\le C\tau<\rho'$, it follows from (\ref{approx-2}) that (d) holds for any choice of $\rho>0$ with $(C+\rho+\frac{1}{2})\rho<\frac{\epsilon|Q\times I|}{3|\tilde Q\times\tilde I|}$.
Next, we choose a $\rho_0>0$ such that
$
\rho_0<\frac{d_\tau}{2}.
$
If $0<\rho<\rho_0$, then by (\ref{approx-1}.1) and (\ref{approx-1}.3),
for all $z\in Q\times I$ and $|(p',\beta')-(p,\beta)|\le \rho_0$,
\[
\xi(p',\beta')+\nabla\omega(z)\in\Sigma_0,\quad\mathrm{dist}(\xi(p',\beta')+\nabla\omega(z),H_\tau)<
d_\tau,
\]
and so $\xi(p',\beta')+\nabla\omega(z)\in S_\delta,$ that is, $(p'+D\varphi(z), \beta'+ \psi_t(z))\in \mathcal{S}_{\delta}$. Thus (b) holds.

The proof is now complete.
\end{proof}

\section{Construction of admissible class $\mathcal U$}

In this section, we define a suitable admissible class $\mathcal U$ as required in Section 3. Assume $\Omega$ and $u_0$ are as given  in Theorem \ref{thm:main-1}, with (\ref{ib-PM-2}) fulfilled in addition.  
 Let $M=\|Du_0\|_{L^\infty(\Omega)}>0$.

\subsection{The modified uniformly parabolic problem} We first apply Lemma \ref{lem-modi} to  construct the function $f\in C^{3}([0,\infty))$ with a suitable choice of $\delta\in (0,1/2)$ and $1<\Lambda<m_+(\delta)$ according to the value of $M$ as follows (see Figure 1).

\begin{enumerate}
\item[(i)]  If $0<M<1$,  we select $0<\delta<\frac{M}{1+M^2}$ and arbitrary  $1<\Lambda<m_+(\delta).$ 
\item[(ii)] If $M\ge 1$ and $\lambda >0$,   we  select $\delta=\frac{M+\lambda}{1+(M+\lambda)^2}$ and   arbitrary $\Lambda \in  (M, M+\lambda)$;  in this case,  $m_+(\delta)=M+\lambda.$  
\end{enumerate}
Note that in both cases we have $M<\Lambda.$
Once   $f\in C^{3}([0,\infty))$ is constructed, we define
\[
A(p)=f(|p|^2)p \quad (p\in \mathbf R^n).
\]
By Lemma \ref{lem-modi}, equation $u_t=\dv A(Du)$ is uniformly parabolic.  We have  the following result.

\begin{lem}\label{lem-a} With $\delta$ selected above and $\mathcal S_\delta$ defined by $(\ref{S-delta})$,  one has
\[
(p,A(p))\in \mathcal S_\delta \quad \forall\; m_-(\delta)<|p|\leq M.
\]
\end{lem}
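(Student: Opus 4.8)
The plan is to verify the defining inequality of $\mathcal S_\delta$ for the pair $(p, A(p))$ with $A(p) = f(|p|^2)\,p$, which is just $\rho^*(|p|)\,p/|p|$ in disguise. Write $r = |p|$ and note that $A(p) = g(r)\,p$ where $g(r) = f(r^2) = \rho^*(r)/r$ (for $r>0$), so $A(p)$ is a positive scalar multiple of $p$. Consequently $p\cdot A(p) = g(r)\,r^2 = r\,\rho^*(r)$, and $|A(p)| = g(r)\,r = \rho^*(r)$; moreover $(1-p\cdot A(p))\,p - A(p) = \big(1 - r\rho^*(r) - g(r)\big)\,p$ is again parallel to $p$, so $|(1-p\cdot A(p))p - A(p)| = |1 - r\rho^*(r) - \rho^*(r)/r|\cdot r$. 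Substituting these into the formula \eqref{S-delta}, the condition $(p, A(p))\in\mathcal S_\delta$ reduces to a one-variable inequality in $r = |p|$ on the interval $m_-(\delta) < r \le M$.

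The key observation is that on $0 \le r \le m_-(\delta)$ we have $\rho^*(r) = \rho(r) = r/(1+r^2)$ by Lemma \ref{lem-modi}, but — more to the point — on $m_-(\delta) < r \le \Lambda$ (and $M < \Lambda$, so certainly on $m_-(\delta) < r \le M$) Lemma \ref{lem-modi} gives $\rho^*(r) < \rho(r)$. I expect the cleanest route is: first establish the result for the \emph{unmodified} Perona–Malik choice, i.e. show that $(p,\sigma(p))\in\mathcal S_0 = L(K_0)$, equivalently $|\sigma(p)|^2 + (p\cdot\sigma(p))^2 - p\cdot\sigma(p) < 0$, precisely when $|p| > 1$; a direct computation with $\sigma(p) = p/(1+r^2)$ gives $|\sigma(p)|^2 = r^2/(1+r^2)^2$, $p\cdot\sigma(p) = r^2/(1+r^2)$, so the left side equals $\frac{r^2}{(1+r^2)^2}\big(1 + r^2 - (1+r^2)\big) \cdot(\text{rearrange})$ — carrying this out yields $\frac{r^2(1 - r^2)}{(1+r^2)^2}$ up to the bookkeeping, which is negative exactly when $r > 1$. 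Then I would use the strict inequality $\rho^*(r) < \rho(r)$ on $m_-(\delta) < r \le M$ together with the monotone/convex-combination structure of $\mathcal S_\delta$: since $\beta = A(p)$ is a shorter vector in the same direction as $\sigma(p)$ with $|A(p)| = \rho^*(r) < \rho(r) = |\sigma(p)|$, and since Corollary \ref{bound-coro} and Lemma \ref{bound-lem} pin down exactly which radial pairs $(p,\beta)$ lie in $\mathcal S_\delta$ (namely $m_-(\delta) < |p| < m_+(\delta)$ with the appropriate window for $|\beta|$ along the direction of $p$), one checks that shrinking $|\beta|$ from $\rho(r)$ toward $0$ keeps the pair inside $\mathcal S_\delta$ as long as $r$ stays in the admissible window — and $m_-(\delta) < r \le M < \Lambda < m_+(\delta)$ does exactly that.

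More concretely, for radial pairs $p = r\,e$, $\beta = \mu\,e$ with $e$ a unit vector, the inequality defining $\mathcal S_\delta$ becomes $\delta\,|(1-r\mu)\,r - \mu| + \mu^2 + r^2\mu^2 - r\mu < 0$, i.e. $\delta\,r\,|1 - r\mu - \mu/r| + \mu\big((1+r^2)\mu - r\big) < 0$. With $\mu = \rho^*(r)$ we have $(1+r^2)\mu - r = (1+r^2)\rho^*(r) - r < (1+r^2)\rho(r) - r = 0$ by Lemma \ref{lem-modi}, so the second term is strictly negative; the whole point is that it is negative \emph{enough} to absorb the first term $\delta\,r\,|1 - (1+1/r^2)\,r\,\rho^*(r)|$. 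This is where I expect the real work to lie: one must quantify the gap $\rho(r) - \rho^*(r) > 0$ — or, equivalently, how close $\delta$ was chosen to the critical threshold $\rho(M)$ (case (i)) or $\rho(M+\lambda)$ (case (ii)) — against the size of the perturbation term, uniformly for $r \in (m_-(\delta), M]$. I anticipate the argument will invoke the specific construction behind Lemma \ref{lem-modi} (the graph in Figure 1), using that $\rho^*$ was built to stay below $\rho$ by a controlled amount on exactly the interval $(m_-(\delta), \Lambda)$, with the choices of $\delta$ and $\Lambda$ in (i)–(ii) tailored so that at $r = M$ the pair $(p, A(p))$ is comfortably inside $\mathcal S_\delta$ rather than on its boundary. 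The main obstacle, then, is not the algebra but verifying that the modification in Lemma \ref{lem-modi} can be (and was) chosen compatibly with the $\delta$-window, i.e. that the strict inequality $\rho^*(r) < \rho(r)$ propagates to the strict inequality $(p, A(p)) \in \mathcal S_\delta$ rather than merely $(p, A(p)) \in \overline{\mathcal S_\delta}$; once that quantitative compatibility is in hand, continuity and the explicit bounds of Lemma \ref{bound-lem} finish the proof for all $m_-(\delta) < |p| \le M$.
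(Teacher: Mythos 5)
Your reduction to a radial one-variable inequality is the right start, but as written the proposal has a genuine gap and two concrete errors. The gap: the step you yourself flag as ``the real work'' --- quantifying the gap $\rho(r)-\rho^*(r)$ so that the negative term absorbs the $\delta$-term, by appealing to the specific construction behind Lemma \ref{lem-modi} and the choice of $\delta$ in cases (i)--(ii) --- is never carried out, only anticipated; so this is not yet a proof. Moreover, no such quantitative absorption is needed, because for radial pairs the defining expression of $\mathcal S_\delta$ factors exactly. Writing $p=re$, $\beta=\mu e$ with $|e|=1$, $\mu>0$, one has $(1-p\cdot\beta)p-\beta=\bigl(r-\mu(1+r^2)\bigr)e$ and $|\beta|^2+(p\cdot\beta)^2-p\cdot\beta=\mu\bigl(\mu(1+r^2)-r\bigr)$, so the membership condition $\delta\,|r-\mu(1+r^2)|+\mu\bigl(\mu(1+r^2)-r\bigr)<0$ is equivalent, when $\mu(1+r^2)<r$ (and it fails otherwise), to $\bigl(r-\mu(1+r^2)\bigr)(\delta-\mu)<0$, i.e.\ to the two clean inequalities $\delta<\mu<\rho(r)$. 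With $\mu=|A(p)|=r f(r^2)=\rho^*(r)$ this is exactly $\delta<\rho^*(r)<\rho(r)$, which is the paper's entire proof: the upper inequality is $\rho^*<\rho$ on $(m_-(\delta),\Lambda]\supset(m_-(\delta),M]$ from Lemma \ref{lem-modi} (recall $M<\Lambda$), and the lower one follows since $\rho^*(m_-(\delta))=\rho(m_-(\delta))=\delta$ and $(\rho^*)'\ge\theta>0$ makes $\rho^*$ strictly increasing. No appeal to Figure 1, to Lemma \ref{bound-lem}, or to how close $\delta$ sits to $\rho(M)$ is required.

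The two errors: first, your ``cleanest route'' starts by claiming $(p,\sigma(p))\in\mathcal S_0$ precisely when $|p|>1$; in fact $|\sigma(p)|^2+(p\cdot\sigma(p))^2-p\cdot\sigma(p)=\frac{r^2+r^4-r^2(1+r^2)}{(1+r^2)^2}\equiv 0$, so $(p,\sigma(p))$ never lies in $\mathcal S_0$ --- such pairs correspond to matrices in $K_0$ itself, which $L(K_0)$ excludes by definition, so there is no ``unmodified'' version of the lemma to perturb from. Second, the assertion that shrinking $|\beta|$ from $\rho(r)$ toward $0$ keeps the radial pair inside $\mathcal S_\delta$ is false: the factorization above shows the admissible window is two-sided, $\delta<\mu<\rho(r)$ (consistent with Corollary \ref{bound-coro}, which forces $|\beta|>\delta$), so making $\beta$ too short leaves the set. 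What saves $A(p)$ is precisely the lower bound $\rho^*(r)>\delta$, which your argument never establishes.
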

\begin{proof} From the definition of set $\mathcal S_\delta$, it follows   that $(p,A(p))\in \mathcal S_\delta$ if and only if
\[
\frac{\delta}{|p|} < f(|p|^2)<\frac{1}{1+|p|^2};  \;\textrm{namely,} \; \rho(m_-(\delta))=\delta<\rho^*(|p|)<\rho(|p|).
\]
By Lemma \ref{lem-modi}, this condition is satisfied if $m_-(\delta) <|p|\le \Lambda.$ 
\end{proof}

\subsection{The suitable boundary function $\Phi$} By Theorem \ref{existence-gr-max}, the  initial-Neumann boundary value problem
\begin{equation}\label{ib-para}
\begin{cases} u^*_t =\dv (A(Du^*))&\mbox{in $\Omega_T$,}\\
\partial u^*/\partial \n  =0 & \mbox{on $\partial \Omega\times [0,T],$}  \\
u^*(x,0)=u_0(x), &x\in \Omega
\end{cases}
\end{equation}
admits a unique classical solution  $u^*\in C^{2+\alpha,\frac{2+\alpha}{2}}(\bar\Omega_T)$  satisfying
\[
|D u^*(x,t)|\le M,\quad (x,t)\in\Omega_T.
\]

From conditions (\ref{ib-PM-1}) and (\ref{ib-PM-2}),  we can find a function $h\in C^{2+\alpha}(\bar\Omega)$ satisfying  
\[
 \Delta h=u_0 \;\; \mbox{in $\Omega$},\quad  \partial h/\partial \n  =0 \;\; \mbox{on $\partial\Omega.$} 
\]   
Now let $v_0=Dh\in C^{1+\alpha}(\bar\Omega;\R^n)$ and define,  for $(x,t)\in\Omega_T$,
\begin{equation}\label{def-v}
v^*(x,t)=v_0(x)+\int_0^t A(Du^*(x,s))\,ds.
\end{equation}
Define $\Phi = (u^*,v^*)\in C^{1}(\bar\Omega_T;\R^{1+n}).$ Then it is easy to see that $\Phi$ satisfies condition (\ref{bdry-1}) above; i.e.,
\begin{equation*}
\begin{cases} u^*(x,0)=u_0(x) \; (x\in\Omega),\\
\dv v^*=u^*\;\;\textrm{in $\Omega_T$}, \\
v^*(\cdot,t) \cdot \mathbf n|_{\partial\Omega} =0 \; \; \forall\; t\in [0,T].\end{cases}
\end{equation*}

\begin{lem} \label{lem-b} Let 
\[
\mathcal K_\delta= \{(p,\sigma(p))\;|\; |p|\le m_-(\delta) \}.
\]
Then
\[
(Du^*(x,t), v^*_t(x,t))\in \mathcal S_\delta \cup \mathcal K_\delta\quad \forall\; (x,t)\in \Omega_T.
\]
\end{lem}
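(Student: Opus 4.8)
The plan is to unwind the definitions and use the structural results already established. Recall $v^*_t(x,t) = A(Du^*(x,t))$ by differentiating $(\ref{def-v})$ in $t$, and that $\|Du^*\|_{L^\infty(\Omega_T)} \le M$ by the gradient maximum principle in Theorem \ref{existence-gr-max} applied to problem $(\ref{ib-para})$. So for each fixed $(x,t)\in\Omega_T$, writing $p = Du^*(x,t)$, we have $|p|\le M$ and $v^*_t(x,t) = A(p)$. The task reduces to showing $(p,A(p))\in\mathcal S_\delta\cup\mathcal K_\delta$ whenever $|p|\le M$.

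I would split into two cases according to the size of $|p|$. First, if $m_-(\delta) < |p| \le M$, then Lemma \ref{lem-a} directly gives $(p,A(p))\in\mathcal S_\delta$, and we are done. Second, if $|p|\le m_-(\delta)$, then by the properties of $f$ from Lemma \ref{lem-modi} (specifically $\rho^*(s) = \rho(s)$ for $0\le s\le m_-(\delta)$), we have $f(|p|^2) = \rho^*(|p|)/|p| = \rho(|p|)/|p| = 1/(1+|p|^2)$ when $p\neq 0$, and $f(0)=1$ agrees with the $p=0$ convention as well; hence $A(p) = f(|p|^2)p = p/(1+|p|^2) = \sigma(p)$. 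Thus $(p,A(p)) = (p,\sigma(p))$ with $|p|\le m_-(\delta)$, which is exactly a point of $\mathcal K_\delta$. Combining the two cases covers all $p$ with $|p|\le M$ (using $M<\Lambda$ and $m_-(\delta)<1\le\Lambda$, though one only needs that the two ranges $|p|\le m_-(\delta)$ and $m_-(\delta)<|p|\le M$ exhaust $\{|p|\le M\}$, which is immediate).

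There is essentially no obstacle here: the lemma is a bookkeeping consequence of Lemma \ref{lem-a}, Lemma \ref{lem-modi}, and the gradient maximum principle. The only mild subtlety is to confirm the identification $A(p)=\sigma(p)$ on the subcritical-modified range, which follows because the modification $\rho^*$ was designed to coincide with the unmodified $\rho$ precisely on $[0,m_-(\delta)]$; one should also note that the degenerate case $|p|=m_-(\delta)$ lands in $\mathcal K_\delta$ (the $\le$ in its definition), so there is no gap at the interface between the two cases. No further estimates are needed.
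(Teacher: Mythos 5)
Your proof is correct and follows the same route as the paper: reduce to $(p,A(p))$ with $|p|\le M$ via the gradient maximum principle and the formula $v^*_t=A(Du^*)$, then split at $|p|=m_-(\delta)$, invoking Lemma \ref{lem-a} above that threshold and the identity $A(p)=\sigma(p)$ (from the construction of $\rho^*$ in Lemma \ref{lem-modi}) below it. The extra detail you give on why $A=\sigma$ on $\{|p|\le m_-(\delta)\}$ is exactly what the paper leaves implicit.
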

\begin{proof}
Given $(x,t)\in\Omega_T$, let $p=Du^*(x,t)$; then  $|p|\le M.$  By (\ref{def-v}), $v^*_t(x,t)=A(p).$ If $|p|\le m_-(\delta)$, then $A(p)=\sigma(p)$ and hence
\[
(Du^*(x,t),v_t^*(x,t))=(p,A(p))=(p,\sigma(p))\in \mathcal K_\delta.
\]
If $m_-(\delta)<|p|\le M,$  then by Lemma \ref{lem-a}
\[
(Du^*(x,t),v_t^*(x,t))=(p,A(p))\in \mathcal S_\delta.
\]
Hence $(Du^*,v_t^*)\in\mathcal S_\delta\cup\mathcal K_\delta$ in $\Omega_T$.
\end{proof}

\subsection{The admissible class $\mathcal U$}
In what follows,  we say that a function $u$ is  {\em piecewise $C^1$} in $\Omega_T$ and write  $u\in C^1_{pc}(\Omega_T)$ provided that there exists a sequence of disjoint open sets $\{G_j\}_{j=1}^\infty$ in $\Omega_T$  with $|\partial G_j|=0$ such that
\[
u\in C^1(\bar G_j)  \;\; \; \forall\; j\in\mathbf{N}=\{1,2,\cdots\},\quad |\Omega_T\setminus \cup_{j=1}^\infty G_j|=0.
\]
(For our purpose it is also acceptable to allow only \emph{finitely}  many pieces  in this definition.)

\begin{defn} Let $\mu=\|u_t^*\|_{L^\infty(\Omega_T)}+1.$   We  define the {\em admissible class}
\begin{equation}\label{ad-class}
\begin{split}
\mathcal U=\Big \{ &u\in C^1_{pc} \cap W_{u^*}^{1,\infty}(\Omega_T)\;\big| \; \|u_t\|_{L^\infty}<\mu;  \; \exists  v\in C^1_{pc} \cap W_{v^*}^{1,\infty}(\Omega_T;\mathbf R^n)     \\
& \mbox{such that  $\dv v=u$ and $(Du,v_t)\in \mathcal S_\delta\cup \mathcal K_\delta$ a.e.\;in $\Omega_T$}\Big \}.\end{split}
\end{equation}
For each $\epsilon>0$, let $\mathcal U_\epsilon$ be defined by
\[
\begin{split}
\mathcal U_\epsilon= \Big \{ & u\in \mathcal U\;\big| \; \exists \, v\in C^1_{pc} \cap W_{v^*}^{1,\infty}(\Omega_T;\R^n) \mbox{ such that  $\dv v=u$ and}\\&
\mbox{ $(Du,v_t)\in \mathcal S_\delta\cup \mathcal K_\delta$ a.e.\,in $\Omega_T$, and
$\int_{\Omega_T} |v_t-\sigma(Du)|dxdt\le\epsilon|\Omega_T|$} \Big \}. \end{split}
\]
\end{defn}

\begin{remk}\label{rmk-1} Clearly $u^*\in\mathcal U$; so $\mathcal U$ is \emph{nonempty}. Also  $\mathcal U$ is a bounded subset of $W^{1,\infty}_{u^*}(\Omega_T)$ as $\mathcal S_\delta\cup \mathcal K_\delta$ is bounded. Moreover, by Corollary \ref{bound-coro}, for each $u\in\mathcal U$, its corresponding vector function $v$ satisfies $\|v_t\|_{L^\infty(\Omega_T)}\le 1/2$. Finally, note that $m_-(\delta)<|Du^*|\le M$ on some nonempty open subset of $\Omega_T$ and   $A(Du^*)\ne \sigma(Du^*)$ on this set;  hence $u^*$ itself is not a weak solution to (\ref{ib-PM}).
\end{remk}

\section{Density property and Proof of Theorem \ref{thm:main-1}}  

In this final section, we prove the density property of the sets $\mathcal U_\epsilon$ and then complete the proof of Theorem \ref{thm:main-1}.

\subsection{The density property of $\mathcal U$} Let $\mathcal U$ and $\mathcal U_\epsilon$ be as defined in Section 5. We establish  the density property (\ref{density-0}).

\begin{thm} \label{thm-density-1} For each $\epsilon>0$, $\mathcal U_\epsilon$ is dense in $\mathcal U$ under the $L^\infty$-norm.
\end{thm}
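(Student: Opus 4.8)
The plan is to prove the density property by a covering-plus-correction argument built on the relaxation theorem, Theorem \ref{main-lemma}. Fix $\epsilon>0$ and $u\in\mathcal U$ together with its companion $v\in C^1_{pc}\cap W^{1,\infty}_{v^*}(\Omega_T;\R^n)$ with $\dv v=u$ and $(Du,v_t)\in\mathcal S_\delta\cup\mathcal K_\delta$ a.e. I must produce $u_\eta\in\mathcal U_\epsilon$ with $\|u_\eta-u\|_{L^\infty(\Omega_T)}<\eta$ for arbitrary $\eta>0$. The set where $(Du,v_t)\in\mathcal K_\delta$ is already harmless for the defect integral (there $v_t=\sigma(Du)$), so all the work is concentrated on the open set where, on each piece $G_j$, the pair $(Du,v_t)$ takes values in a compact subset $\mathcal K\subset\mathcal S_\delta$. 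On such a piece I cover (up to a null set) by countably many small boxes $Q_k\times I_k\subset G_j$ on which $(Du,v_t)$ is nearly constant, say within $\rho_0$ of some $(p_k,\beta_k)\in\mathcal K$, where $\rho_0$ comes from Theorem \ref{main-lemma} applied with a fixed ambient box $\tilde Q\times\tilde I\supset\Omega_T$ and the given $\epsilon$.

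Next I would run the perturbation. On each box $Q_k\times I_k$ apply Theorem \ref{main-lemma} to obtain $\omega_k=(\varphi_k,\psi_k)\in C^\infty_c(Q_k\times I_k;\R^{1+n})$ with $\dv\psi_k=0$, with $(Du+D\varphi_k,v_t+\psi_{k,t})\in\mathcal S_\delta$ pointwise (this is where property (b), stated uniformly over a $\rho_0$-ball, gets used: $Du$ and $v_t$ are not literally constant on the box), with $\|\omega_k\|_{L^\infty}<\rho$ and $\|\varphi_{k,t}\|_{L^\infty}<\rho$ for a $\rho$ we are free to shrink, with $\int_{Q_k}\varphi_k(\cdot,t)\,dx=0$, and with the defect bound (d) summing to at most $\epsilon|Q_k\times I_k|/|\tilde Q\times\tilde I|$. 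Set $u_\eta=u+\sum_k\varphi_k$ and $v_\eta=v+\sum_k\psi_k$, where the sums are locally finite in the interior and the $\varphi_k$, $\psi_k$ have disjoint compact supports. Then $\dv v_\eta=u+\sum_k\dv\psi_k=u$? — no: one needs $\dv\psi_k$ to supply exactly $\varphi_k$. This is the subtle point: Theorem \ref{main-lemma} gives $\dv\psi_k=0$, not $\dv\psi_k=\varphi_k$. The resolution, following the ``new approach'' flagged in the introduction, is to correct the linear constraint using the right-inverse operator $\mathcal R$ from Theorem \ref{div-inv}: replace $v_\eta$ by $v+\sum_k\psi_k+\mathcal R(\varphi_k)$ on each box. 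Since $\varphi_k\in W^{1,\infty}_0(Q_k\times I_k)$ with $\int_{Q_k}\varphi_k(\cdot,t)\,dx=0$ (property (e)), Theorem \ref{div-inv} gives $\mathcal R\varphi_k\in W^{1,\infty}_0(Q_k\times I_k;\R^n)$ with $\dv\mathcal R\varphi_k=\varphi_k$ and, crucially, $\|(\mathcal R\varphi_k)_t\|_{L^\infty}\le C_n(\text{diam of box})\|\varphi_{k,t}\|_{L^\infty}\le C_n(\text{diam})\rho$.

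The final bookkeeping is then: (i) $u_\eta\in W^{1,\infty}_{u^*}(\Omega_T)$ since each $\varphi_k$ vanishes on the boundary of its box; $u_\eta\in C^1_{pc}$ since the $\varphi_k$ are smooth on disjoint pieces and $u$ was piecewise $C^1$; $\|u_\eta-u\|_{L^\infty}=\|\sum_k\varphi_k\|_{L^\infty}<\rho<\eta$ by choosing $\rho<\eta$; (ii) $\|u_{\eta,t}\|_{L^\infty}\le\|u_t\|_{L^\infty}+\rho<\mu$ for $\rho$ small enough, using $\|u_t\|_{L^\infty}<\mu$ strictly; (iii) $\dv v_\eta=u+\sum_k\varphi_k=u_\eta$ a.e.; $v_\eta\in W^{1,\infty}_{v^*}(\Omega_T;\R^n)\cap C^1_{pc}$; (iv) on each box $(Du_\eta,v_{\eta,t})=(Du+D\varphi_k,v_t+\psi_{k,t}+(\mathcal R\varphi_k)_t)$ must land in $\mathcal S_\delta$ — here I need the $(\mathcal R\varphi_k)_t$ term, of size $\le C_n(\text{diam})\rho$, to be absorbed, so I pre-shrink the boxes in the covering so that $C_n(\text{diam})\rho<\rho_0/2$ and use property (b) with the enlarged perturbation, at the cost of choosing $\rho_0$ first and then the box diameters; (v) outside $\cup_k(Q_k\times I_k)$ nothing changed, so $(Du_\eta,v_{\eta,t})\in\mathcal S_\delta\cup\mathcal K_\delta$ there; (vi) the defect integral $\int_{\Omega_T}|v_{\eta,t}-\sigma(Du_\eta)|$ splits into the part over $\{(Du,v_t)\in\mathcal K_\delta\}$ (which contributes $O(\rho)$ from the $(\mathcal R\varphi_k)_t$ and $D\varphi_k$ corrections if any boxes intersect it — better to arrange the covering to avoid $\mathcal K_\delta$, using that there $v_t=\sigma(Du)$ exactly and leaving it untouched) plus the sum of the box contributions bounded via (d) by $\epsilon|\Omega_T|/|\tilde Q\times\tilde I|\cdot(\text{total box measure})\le\epsilon|\Omega_T|$, plus a $\sigma$-Lipschitz error $\le\text{Lip}(\sigma)\|\sum D\varphi_k\|$ and a $\|\sum(\mathcal R\varphi_k)_t\|$ error, both $O(\rho)$, so choosing $\rho$ small gives total $\le\epsilon|\Omega_T|$ after minor adjustment of constants. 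The main obstacle, and the step I would spend the most care on, is the interplay in (iv): the right-inverse correction $\mathcal R\varphi_k$ needed to restore $\dv v_\eta=u_\eta$ introduces an uncontrolled-in-principle time derivative that must be kept inside the open set $\mathcal S_\delta$; this forces the ordering ``fix $\epsilon$ and $\tilde Q\times\tilde I$, get $\rho_0$ from Theorem \ref{main-lemma}, then choose box diameters small relative to $\rho_0/(C_n\rho)$, then choose $\rho$ small relative to $\eta$, $\mu-\|u_t\|_\infty$, and the defect budget'' — and it is exactly this circular-looking dependency that must be shown to be consistent, which it is because $\rho$ only needs to be small \emph{after} $\rho_0$ and the diameters are fixed.
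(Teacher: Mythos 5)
Your overall architecture (cover by small boxes where $(Du,v_t)$ is nearly constant, perturb by Theorem \ref{main-lemma}, restore the constraint $\dv v=u$ with the right inverse $\mathcal R$ of Theorem \ref{div-inv}, absorb the oscillation and the $(\mathcal R\varphi_k)_t$ term into the $\rho_0$-uniformity of property (b)) is the paper's, but there is a genuine gap at the step you pass over in point (iii): you patch \emph{countably} many corrections and simply assert $v_\eta\in C^1_{pc}\cap W^{1,\infty}_{v^*}(\Omega_T;\R^n)$. Membership in $\mathcal U_\epsilon$ (and in $\mathcal U$) requires the companion $v$ to be globally Lipschitz, and nothing in Theorem \ref{main-lemma} or Theorem \ref{div-inv} controls the \emph{spatial} gradients $D\psi_k$ and $D(\mathcal R\varphi_k)$: the construction of $\psi_k$ uses the rank-one matrix with block $\tfrac1b\,\gamma\otimes q$ where $b$ must be taken small (of order $\rho$) to make $\varphi_{k,t}$ small, so $\|D\psi_k\|_{L^\infty}$ blows up as the parameters are refined, and the relevant constants change from piece $G_j$ to piece $G_j$ (each piece carries its own compact set, $\rho_0$ and $\rho$). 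Hence the countable sum need not be Lipschitz, and your $\tilde v$ may fail to be admissible. This is exactly the difficulty the paper flags as the heart of the matter: one glues only a \emph{finite} subfamily $\mathcal I$ of the boxes, and pays for the uncovered boxes in the defect integral using the trivial bound $|v_t-\sigma(Du)|\le 1$ (from $|v_t|,|\sigma(Du)|\le\tfrac12$), reserving an $\tfrac{\epsilon}{3}|\Omega_T|$ budget for that tail. Your proposal has no such tail estimate, and without it the truncation to finitely many boxes (which is forced) is not justified.

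A second, smaller but still real, gap is the claim that on each piece $G_j$ the values of $(Du,v_t)$ lie in a compact subset $\mathcal K\subset\mathcal S_\delta$, from which you extract a single $\rho_0$. This is false in general: a.e.\ on $G_j$ the values lie in $\mathcal S_\delta\cup\mathcal K_\delta$, and since $\mathcal S_\delta$ is open they may accumulate on $\partial\mathcal S_\delta$, where Theorem \ref{main-lemma} cannot be applied. One must first exhaust: work only on $\{z:(Du,v_t)\in\mathcal G_{\tau_j}\}$ with $\mathcal G_{\tau}=\{(p,\beta)\in\mathcal S_\delta:|\beta-\sigma(p)|>\tau,\ \operatorname{dist}((p,\beta),\partial\mathcal S_\delta)>\tau\}$ compactly contained in $\mathcal S_\delta$, and show that the leftover set contributes at most, say, $\tfrac{\epsilon}{3\cdot 2^j}|\Omega_T|$ to the defect, which holds because as $\tau\to0^+$ the leftover shrinks to a set on which $v_t=\sigma(Du)$ (the near-$\partial\mathcal S_\delta$ points surviving in the limit lie in $\mathcal K_\delta$). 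Your remark that one can ``arrange the covering to avoid $\mathcal K_\delta$'' does not address this, since the problematic region is where the values are in $\mathcal S_\delta$ but close to its boundary, where the defect need not be small. Finally, a bookkeeping slip in (vi): the error in comparing $\sigma(Du(z)+D\varphi_k(z))$ with the quantity controlled by property (d) is \emph{not} $\mathrm{Lip}(\sigma)\|D\varphi_k\|$ (which is $O(1)$, not $O(\rho)$); it is controlled by the oscillation of $Du$ over the small box together with the uniform continuity of $\sigma$ on $\{|p|\le m_+(\delta)\}$, which is why the box diameters must also be tied to a modulus of continuity of $\sigma$, not only to $\rho_0$.
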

\begin{proof}
Let  $u\in \mathcal U$, $\eta>0$. The goal is to show that there exists a function  $\tilde u\in  \mathcal U_\epsilon$ such that $\|\tilde u-u\|_{L^\infty(\Omega_T)}<\eta.$
For clarity, we divide the proof into several steps.

1. Note $\|u_t\|_{L^\infty(\Omega_T)}<\mu-\tau_0$ for some $\tau_0>0$ and there exists a function $v\in C^1_{pc} \cap W_{v^*}^{1,\infty}(\Omega_T;\mathbf R^n)$   such that  $\dv v=u$ and  $(Du,v_t)\in \mathcal S_\delta\cup \mathcal K_\delta$ a.e.\;in $\Omega_T.$
Since both $u$ and $v$ are piecewise $C^1$ in $\Omega_T$,  there exists a sequence of disjoint open sets $\{G_j\}_{j=1}^\infty$ in $\Omega_T$ with  $|\partial G_j|=0\;\forall j\ge 1$ such that
\[
u\in C^1(\bar G_j),
\;v\in C^1(\bar G_j;\R^n)\; \; \forall j\geq 1,\quad |\Omega_T\setminus \cup_{j=1}^\infty G_j|=0.
\] 

2. Let $j\in \mathbf{N}$ be fixed. Note that $(Du(z),v_t(z))\in\bar{\mathcal{S}}_\delta\cup\mathcal{K}_\delta$ for all $z\in G_j$ and that $H_j=\{z\in G_j\;|\;(Du(z),v_t(z))\in\partial\mathcal{S}_\delta\}$ is a (relatively) closed set in $G_j$ with measure zero. So $\tilde G_j=G_j\setminus H_j$ is an open subset of $G_j$ with $|\tilde G_j|=|G_j|$, and $(Du(z),v_t(z))\in\mathcal{S}_\delta\cup\mathcal{K}_\delta$ for all $z\in \tilde G_j$.

3. For each $\tau>0$, let  $\mathcal G_{\tau}=\{(p,\beta)\in  \mathcal S_{\delta} \;|\; |\beta-\sigma(p)|> \tau,\,\mathrm{dist}((p,\beta),\partial\mathcal S_\delta)>\tau)\};$
then $\mathcal G_\tau\subset\subset \mathcal S_{\delta}.$
We can find a $\tau_j>0$ such that
\begin{equation}\label{density-3}
\int_{F_j} |v_t(z)-\sigma(Du(z))|\,dz < \frac{\epsilon}{3\cdot2^{j}}|\Omega_T|,
\end{equation}
where $z=(x,t)$ and $F_j=  \{z\in \tilde G_j\; |\; (Du(z),v_t(z))\notin \mathcal G_{\tau_j}\}.$ To check this, note $F_j=F_{\tau_j}^1\cup F_{\tau_j}^2$, where $F_\tau^1=\{ z\in \tilde G_j\; |\; |v_t(z)-\sigma(Du(z))| \le \tau \}$ and $
F_\tau^2=\{z\in \tilde G_j\;|\; \mathrm{dist}((Du(z),v_t(z)),\partial\mathcal S_\delta)\le \tau\}.$
Clearly, $\int_{F_\tau^1} |v_t-\sigma(Du)|dz \le \tau|\Omega_T|\to 0$ as $\tau\to 0^+$. Since $F_\tau^2$ is decreasing as $\tau\to 0^+$ and $v_t=\sigma(Du)$ on $\cap_{\tau>0} F_\tau^2$,  it follows that $\int_{F_\tau ^2} |v_t-\sigma(Du)|dz \to 0$ as $\tau\to 0^+.$

4. Let $O_j=\tilde G_j\setminus F_j.$ Furthermore, we may require that the number $\tau_j$ be chosen in such a way that either $O_j$ is empty or $O_j$ is a nonempty open set with $|\partial O_j|=0$ (see \cite{Zh}).  Let $J$ be the set of all $j\in\mathbf{N}$ with $O_j\neq\emptyset$. Then for $j\not\in J$, $F_j=\tilde G_j$.

5. We now fix a $j\in J$. Note that $O_j=\{z\in \tilde G_j\; |\; (Du(z),v_t(z))\in \mathcal G_{\tau_j}\}$ and that $\mathcal K_j:=\bar{\mathcal G}_{\tau_j}$ is a compact subset of $\mathcal S_\delta$. Let $\tilde Q\subset\R^n$ be an open box with $\Omega\subset\tilde Q$ and $\tilde I=(0,T)$. Applying Theorem \ref{main-lemma} to box $\tilde Q\times\tilde I,$ $\mathcal K_j$, and $\epsilon'=\frac{\epsilon|\Omega_T|}{12}$, we obtain a constant $\rho_j>0$ that satisfies  the conclusion of the theorem. By the uniform continuity of $\sigma$ on compact subsets of $\R^n$, we can find a $\theta=\theta_{\epsilon,\delta}>0$ such that
\begin{equation}\label{density-5}
|\sigma(p)-\sigma(p')|<\frac{\epsilon}{12}
\end{equation}
if $|p|,\,|p'|\le m_+(\delta)$ and $|p-p'|\le \theta .$
Also by the uniform continuity of $u$, $v$, and their gradients on $\bar G_j$,
there exists a $\nu_j>0$ such that
\begin{equation}\label{density-1}
\begin{array}{c}
  |u(z)-u(z')|+|\nabla u(z)-\nabla u(z')| +|v(z)-v(z')| \\
  +|\nabla v(z)-\nabla v(z')| <\min\{\frac{\rho_{j}}{2},\frac{\epsilon}{12},\theta\} \end{array}
\end{equation}
whenever $z,z'\in \bar G_j$ and  $|z-z'|\le \nu_j.$ We now cover $O_j$ (up to measure zero) by a sequence of disjoint open cubes $\{Q_j^i\times I^i_j\}_{i=1}^\infty$ in $\Omega_T$ whose sides are parallel to the axes with center $z_j^i$ and diameter $l^i_j<\nu_j.$

6. Fix  $i\in\mathbf{N}$ and write  $w=(u,v)$,  $\xi=\begin{pmatrix} p&c\\B & \beta\end{pmatrix}=\nabla w(z_j^i)=\begin{pmatrix} Du(z_j^i) & u_t(z_j^i)\\Dv(z_j^i) & v_t(z_j^i)\end{pmatrix}.$   By the choice of $\rho_j>0$ in Step 5 via Theorem \ref{main-lemma}, since $Q^i_j\times I^i_j\subset\tilde Q\times\tilde I$ and $(p,\beta)\in\mathcal K_j$,  for all sufficiently small $\rho>0$, there exists a function $\omega^i_j=(\varphi^i_j,\psi^i_j)\in C^\infty_c(Q^i_j\times I^i_j;\R^{1+n})$ satisfying

(a) \; $\dv \psi^i_j =0$  in $Q^i_j\times I^i_j$,

(b) \; $(p'+D\varphi^i_j(z), \beta'+ (\psi^i_j)_t(z))\in \mathcal{S}_{\delta}$ for all $z\in Q^i_j\times I^i_j$

\hspace{8mm} and all $|(p',\beta')-(p,\beta)|\leq\rho_{j},$

(c) \; $\|\omega^i_j\|_{L^\infty(Q^i_j\times I^i_j)}<\rho,$

(d) \; $\int_{Q^i_j\times I^i_j} |\beta+(\psi^i_j)_t(z)-\sigma(p+D\varphi^i_j(z))|dz<\epsilon' |Q^i_j\times I^i_j|/|\tilde Q\times \tilde I|,$

(e) \; $\int_{Q^i_j} \varphi^i_j(x,t)dx=0$  for all $t\in I^i_j,$

(f) \; $\|(\varphi^i_j)_t\|_{L^\infty(Q^i_j\times I^i_j)}<\rho.$\\
Here, we let $0<\rho\leq\min\{\tau_0,\frac{\rho_{j}}{2C},\frac{\epsilon}{12C},\eta\}$, where $C_n$ is the constant in Theorem \ref{div-inv} and $C$ is the product of $C_n$ and the sum of the lengths of all sides of $\tilde Q$.
By (e), we can apply Theorem \ref{div-inv} to $\varphi^i_j$ on $Q^i_j\times I^i_j$ to obtain a function $g^i_j=\mathcal R\varphi^i_j\in C^1( \overline{Q^i_j\times I^i_j};\R^n)\cap W^{1,\infty}_0( Q^i_j\times I^i_j;\R^n)$
such that $\dv g^i_j=\varphi^i_j$ in $Q^i_j\times I^i_j$ and
\begin{equation}\label{density-2}
\|(g^i_j)_t\|_{L^\infty(Q^i_j\times I^i_j)}\leq C\|(\varphi^i_j)_t\|_{L^\infty(Q^i_j\times I^i_j)}\leq\frac{\rho_{j}}{2}.
\end{equation}

7. As $|v_t|,\,|\sigma(Du)|\leq1/2\,$ a.e. in $\Omega_T$, we can select a finite index set $\mathcal I\subset J\times \mathbf{N}$ so that
\begin{equation}\label{density-4}
\int_{\bigcup_{(j,i)\in (J\times\mathbf{N}) \setminus \mathcal I}Q^i_j\times I^i_j}|v_t(z)-\sigma(Du(z))|dz\le\frac{\epsilon}{3}|\Omega_T|.
\end{equation}
We finally define
\[
(\tilde u,\tilde v)=(u,v)+\sum_{(j,i)\in\mathcal I}\chi_{Q^i_j\times I^i_j}(\varphi^i_j,\psi^i_j+g^i_j)\;\;\textrm{ in $\Omega_T$.}
\]
As a side remark, note here that only \emph{finitely} many functions $(\varphi^i_j,\psi^i_j+g^i_j) $ are disjointly patched  to the original $(u,v)$ to obtain a new function $(\tilde u,\tilde v)$ towards the goal of the proof. The reason for using only finitely many pieces of gluing is due to the lack of control over the spatial gradients $D(\psi^i_j+g^i_j)$, and overcoming this difficulty is at the heart of  this paper.

8. Let us now check that $\tilde u$ together with $\tilde v$ indeed gives the desired result. By construction, it is clear that $\tilde u\in C^1_{pc}\cap W^{1,\infty}_{u^*}(\Omega_T)$, $\tilde v\in C^1_{pc}\cap W^{1,\infty}_{v^*}(\Omega_T;\R^n).$
By the choice of $\rho$ in (f) as $\rho\le\tau_0$, we have $\|\tilde u_t\|_{L^\infty(\Omega_T)}<\mu.$
Next, let $(j,i)\in\mathcal I,$ and observe that for $z\in Q^i_j\times I^i_j$, with $(p,\beta)=(Du(z^i_j),v_t(z^i_j))\in\mathcal G_{\tau_j}$, since $|z-z_j^i|<l^i_j<\nu_j$, it follows from (\ref{density-1}) and (\ref{density-2}) that
\[
|(Du(z),v_t(z)+(g^i_j)_t(z))-(p,\beta)|\leq\rho_{j},
\]
and so $(D\tilde u(z),\tilde v_t(z))\in\mathcal S_\delta$, by (b) above. From (a) and $\dv g^i_j=\varphi^i_j$, for $z\in Q^i_j\times I^i_j$,
\[
\dv \tilde v(z)=\dv (v+\psi^i_j+g^i_j)(z)=u(z)+0+\varphi^i_j(z)=\tilde u(z).
\]
Therefore, $\tilde u\in\mathcal U$.
Next, observe
\[
\int_{\Omega_T}|\tilde v_t-\sigma(D\tilde u)|dz= \int_{\cup_{j\in\mathbf{N}}F_j}|v_t-\sigma(Du)|dz
\]
\[
+\int_{\cup_{(j,i)\in (J\times\mathbf{N}) \setminus \mathcal I}Q^i_j\times I^i_j}|v_t-\sigma(Du)|dz+\int_{\cup_{(j,i)\in  \mathcal I}Q^i_j\times I^i_j}|\tilde v_t-\sigma(D\tilde u)|dz
\]
\[
= I_1+I_2+I_3.
\]
 From (\ref{density-3}) and (\ref{density-4}), we have $I_1+I_2\leq\frac{2\epsilon}{3}|\Omega_T|$. Note also that for $(j,i)\in\mathcal I$ and $z\in Q^i_j\times I^i_j,$ from (\ref{density-1}), (\ref{density-2}), and (f),
\[
|\tilde v_t(z)-\sigma(D\tilde u(z))|=|v_t(z)+(\psi^i_j)_t(z)+(g^i_j)_t(z)-\sigma(Du(z)+D\varphi^i_j(z))|
\]
\[
\leq |v_t(z)-v_t(z^i_j)|+|v_t(z^i_j)+(\psi^i_j)_t(z)-\sigma(Du(z^i_j)+D\varphi^i_j(z))|
\]
\[
+|(g^i_j)_t(z)|+|\sigma(Du(z^i_j)+D\varphi^i_j(z))-\sigma(Du(z)+D\varphi^i_j(z))|
\]
\[
\le\frac{\epsilon}{6}+|v_t(z^i_j)+(\psi^i_j)_t(z)-\sigma(Du(z^i_j)+D\varphi^i_j(z))|
\]
\[
+|\sigma(Du(z^i_j)+D\varphi^i_j(z))-\sigma(Du(z)+D\varphi^i_j(z))|.
\]
Here, as $(D\tilde u(z),\tilde v_t(z))\in\mathcal S_\delta$, we have $|Du(z)+D\varphi^i_j(z)|=|D\tilde u(z)|\le m_+(\delta)$, and by (\ref{density-1}), $|Du(z^i_j)-Du(z)|<\theta$. From (\ref{density-5}) we now have
\[
|\sigma(Du(z^i_j)+D\varphi^i_j(z))-\sigma(Du(z)+D\varphi^i_j(z))|<\frac{\epsilon}{12}.
\]
Integrating the above inequality over $Q^i_j\times I^i_j,$ we thus obtain from (d) that
\[
\int_{Q^i_j\times I^i_j}|\tilde v_t(z)-\sigma(D\tilde u(z))|dz\le\frac{\epsilon}{4}|Q^i_j\times I^i_j|+\frac{\epsilon|\Omega_T|}{12}\frac{|Q^i_j\times I^i_j|}{|\tilde Q\times \tilde I|}\]
\[
\leq\frac{\epsilon}{3}|Q^i_j\times I^i_j|,
\]
which yields that $I_3\leq\frac{\epsilon}{3}|\Omega_T|$. Hence $I_1+I_2+I_3\le\epsilon|\Omega_T|$, and so $\tilde u\in\mathcal U_\epsilon.$
Finally, from (c) with $\rho\le \eta$ and the definition of $\tilde u$, we have 
\[
\|\tilde u-u\|_{L^\infty(\Omega_T)}<\eta.
\]
This completes the proof.
\end{proof}

\subsection{Completion of Proof of Theorem \ref{thm:main-1}}   The existence of infinitely many weak solutions to problem (\ref{ib-PM}) readily follows from a combination of Remark \ref{rmk-1}, Theorem \ref{thm-density-1},  and Theorem \ref{thm:main}. 
 
To prove the last statement of Theorem \ref{thm:main-1},  assume $M=\|Du_0\|_{L^\infty(\Omega)}\ge 1$ and $\lambda>0$.  Recall that  in this case $\delta=\frac{M+\lambda}{1+(M+\lambda)^2}.$ By the definition of $\mathcal{U}$ and Corollary \ref{bound-coro}, we  have $|Du|\leq m_+(\delta)=M+\lambda$ a.e. in $\Omega_T$ for all $u\in\mathcal U.$ On the other hand, following Step 3 in the proof of Theorem \ref{thm:main}, every weak solution $u\in\mathcal G$ is the $L^{\infty}$-limit of some sequence $u_j\in\mathcal U$ such that $Du_j\to Du$ a.e. in $\Omega_T$.  So these weak solutions $u\in\mathcal G$ must satisfy $\|Du\|_{L^\infty(\Omega_T)}\le M+\lambda.$

\end{document}